\newtheorem{theorem}{Theorem}[section]
\newtheorem{lemma}[theorem]{Lemma}
\newtheorem{remark}[theorem]{Remark}
\newtheorem{definition}[theorem]{Definition}
\newcommand{\R}{\mathbb R}
\newcommand{\C}{\mathbb C}
\newcommand{\ii}{\mathfrak i}
\newcommand{\la}{\lambda}
\newcommand{\diag}{{\rm diag}}
\newcommand{\rank}{{\rm rank\,}}
\title{Real-congruence canonical forms of real matrices}
\author{Fernando De Ter\'an\thanks{Universidad Carlos III de Madrid, ROR: https://ror.org/03ths8210, Departamento de Matemáticas, Avda. de la Universidad 30, 28911 Legan\'es (Madrid), Spain, {\tt fteran@math.uc3m.es}} \quad and \quad Froil\'an M. Dopico\thanks{Universidad Carlos III de Madrid, ROR: https://ror.org/03ths8210, Departamento de Matemáticas, Avda. de la Universidad 30, 28911 Legan\'es (Madrid), Spain, {\tt dopico@math.uc3m.es}}}
\date{}
\begin{document}

\maketitle

\begin{abstract}
     We present two new canonical forms for real congruence of a real square matrix $A$. The first one is a direct sum of canonical matrices of four different types and is obtained from the canonical form under $^*$congruence of complex matrices provided by Horn and Sergeichuk in [Linear Algebra Appl. 416 (2006) 1010-1032].  The second one is a direct sum of canonical matrices of three different types, has a block tridiagonal structure and is obtained from the canonical form under $^*$congruence of complex matrices provided by Futorny, Horn and Sergeichuk in [J. Algebra 319 (2008) 2351-2371]. A detailed comparison between both canonical forms is also presented, as well as their relation with the real Kronecker canonical form under strict real equivalence of the matrix pair $(A^\top , A)$. Another canonical form for real congruence was presented by Lee and Weinberg in [Linear Algebra Appl. 249 (1996) 207-215], which consists of a direct sum of eight different types of matrices. In the last part of the paper, we explain the correspondence between the blocks in this canonical form and those in the two new forms introduced in this work.
\end{abstract}

\noindent{\bf Keywords}: real matrices; congruence; $^*$congruence; real congruence; matrix pencils; palindromic matrix pencils; real bilinear form; canonical form.

\bigskip

\noindent{\bf AMS subject classification}: 15A18, 15A21, 15A22, 15A63, 65F15.

\section{Introduction}

The classification of general sesquilinear or bilinear forms ${\cal A}:{\mathbb F}^n\times{\mathbb F}^n\rightarrow{\mathbb F}$ over a field $\mathbb F$ is a long-standing issue that has been addressed both from the scope of pure algebra and linear algebra. A classification of bilinear forms over the complex field (namely, when $\mathbb F=\C$) is known since, at least, the 1930s \cite[p. 139]{ta} using a linear algebra approach (see \cite{dt16} for some historical details). With a  pure algebra approach, relevant contributions were obtained in \cite{gabriel,riehm}, for sesquilinear and bilinear forms over arbitrary fields $\mathbb F$ by reducing the problem to classifying Hermitian forms over finite extensions of $\mathbb F$ (see also the Introduction of \cite{fhs08} for more details). When $\mathbb F$ is, respectively, the complex field or the real field, it is natural to look for a classification using, respectively, the $^*$congruence or the real-congruence of matrices, namely the following actions of the groups ${\rm GL}_n(\C)$ and ${\rm GL}_n(\R)$ on the sets of complex and real $n\times n$ matrices, respectively:
$$
\begin{array}{ccc}
     \begin{array}{ccc}
         {\rm GL}_n(\C)\times\C^{n\times n}&\rightarrow&\C^{n\times n} \\
          (P,A)&\mapsto&PAP^*
     \end{array}&  \mbox{and}&
     \begin{array}{ccc}
         {\rm GL}_n(\R)\times\R^{n\times n} &\rightarrow&\R^{n\times n}  \\
          (P,A)&\mapsto&PAP^\top,
     \end{array}
\end{array}
$$
where $P^*$ and $P^\top$ denote the conjugate-transpose and the transpose of $P$, respectively.
The reason for using such actions relies on the fact that two $^*$\!congruent (respectively, real-congruent) complex (resp., real) matrices correspond to the same sesquilinear (resp., bilinear) form over $\C^n\times\C^n$ (resp., $\R^n\times\R^n$) in different bases. Moreover, it is also natural to classify these sesquilinear or bilinear forms by means of a canonical form, which provides a unique representative for every equivalence class (namely, for every single {\em orbit} under the action of the group ${\rm GL}_n (\C)$ or ${\rm GL}_n (\R)$).

Canonical forms for $^*$congruence of complex matrices have been provided in \cite[Theorem 1.1(b)]{hs06} and later in \cite[Theorem 1.2]{fhs08}. In both cases, the canonical forms consist of a direct sum of canonical blocks. In the case of the canonical form in \cite{hs06} there are three different types of blocks, whereas in the one in \cite{fhs08} there are two different types of blocks, which are tridiagonal. Some of the blocks depend on certain parameters. From these canonical forms, and specializing to real matrices, it is possible to get canonical forms for real congruence, and this is the main goal of this work. In particular, we derive and present in Theorem \ref{main_th} a canonical form for real congruence of real square matrices from the canonical form introduced in \cite{hs06}. This canonical form  for real congruence consists of a direct sum of blocks of four different kinds, which come from considering separately those blocks of the canonical form for $^*$congruence of general complex matrices involving real parameters and those associated to (pairs of conjugate) non-real parameters. In terms of bilinear forms, the canonical form of Theorem \ref{main_th} allows us to classify bilinear forms over the real field. Analogously, we present and derive in Theorem \ref{second_main_th} a canonical form for real congruence of real square matrices from the canonical form introduced in \cite{fhs08}. The canonical form in Theorem \ref{second_main_th} is a direct sum of blocks of three different types. The relation between the two canonical forms for real congruence in Theorems \ref{main_th} and \ref{second_main_th} is presented in Theorem \ref{thm.relation2}, which is connected with Theorem \ref{thm.relation1} about the correspondence between the blocks of the canonical forms for $^*$\!congruence of complex matrices in \cite[Theorem 1.1(b)]{hs06} and \cite[Theorem 1.2]{fhs08}.

Another canonical form for real congruence of real square matrices was presented in \cite[Theorem II]{lw96}. It consists also of a direct sum of canonical blocks of several types, but in this case the number of different types of blocks is eight, which is twice the number of different types of blocks in the canonical form in Theorem \ref{main_th} and more than twice the number of canonical blocks in the canonical form in Theorem \ref{second_main_th}. In Section \ref{leewein_sec}, we obtain and display the correspondence between the eight types of blocks in the canonical form of \cite{lw96} and the ones presented in Theorem \ref{main_th} (the correspondence with the blocks in Theorem \ref{second_main_th} can be obtained via Theorem \ref{thm.relation2}). Moreover, in Section \ref{leewein_sec}, we impose restrictions on the values of some of the parameters appearing in the blocks in \cite[Theorem II]{lw96} to make these blocks ``truly'' canonical (see Remark \ref{rem.leeweinparam}).

The real congruence canonical form of a matrix $A\in\R^{n\times n}$ is naturally related with the real Kronecker canonical form under strict real equivalence \cite{lr-siamrev-05} of the real matrix pair $(A^\top , A)$, or, equivalently, of the matrix pencil $\la A^\top + A$ in the variable $\la$. These particular pencils are called real palindromic pencils and are in the intersection of the families of complex $\top$-palindromic pencils and $*$-palindromic pencils \cite{4m-good}. Although palindromic pencils (without the name) exist in the literature since long time ago, they have received considerable attention in the last two decades, in particular since \cite{4m-good} was published. A small sample of other recent works dealing with palindromic pencils are \cite{dt16,dt18,4m-palinsmith}. Recall in this context that two complex matrix pencils $\la N_1 + M_1$ and $\la N_2 + M_2$ are said to be {\em strictly equivalent} if there are two invertible complex matrices $P,Q$ such that $P(\la N_1 + M_1)Q=\la N_2 + M_2$ and {\em strictly real equivalent} if $P$ and $Q$ are real. Theorems \ref{thm.1strealCFandKCF} and \ref{thm.2strealCFandKCF} establish, respectively, the relations between the {\em real congruence} canonical forms of $A\in \R^{n\times n}$ in Theorems \ref{main_th} and \ref{second_main_th} and the real Kronecker canonical form {\em under strict real equivalence} of $(A^\top , A)$. More precisely, Theorems \ref{thm.1strealCFandKCF} and \ref{thm.2strealCFandKCF} prove that the real congruence canonical forms fully determine the real Kronecker canonical form but that {\em the real Kronecker canonical form determines the real congruence canonical forms only up to the sign of certain canonical blocks}. Despite this indetermination, we will see throughout the paper that the real Kronecker canonical form under strict real equivalence of $(A^\top , A)$ is a very useful tool in the study of the real congruence canonical forms of $A$, because the class of real strict equivalence transformations is much larger than the class of real congruence transformations. So, it is often easier to prove that two real pairs $(A^\top , A)$ and $(B^\top , B)$ are strictly real equivalent than to prove that the matrices $A$ and $B$ are real congruent.

\section{Notation, basic notions, and basic results} \label{sec.prelim}
In this section, we present some definitions and results that will be used in the rest of the paper. Most of them are well-known or are direct consequences of well-known results. Others are new.

\subsection{Congruence of matrices. Equivalence of matrix pairs}
By $I_k$ we denote the $k\times k$ identity matrix, and $\ii$ denotes the imaginary unit (namely $\ii^2=-1$). The notation $M^\top$ and $M^*$ is used for, respectively, the transpose and the conjugate transpose of the matrix $M$. The {\em$^*$\!cosquare} of an invertible matrix $M$ is the matrix $M^{-*}M$, where $M^{-*}$ denotes the conjugate transpose of the inverse of $M$. By $\C^{m\times n}$ and $\R^{m\times n}$ we denote the sets of $m\times n$ complex and real matrices, respectively.

\begin{definition} \label{def.cong}
Two matrices $A,B\in\C^{n\times n}$ are
\begin{itemize}
\item {\em $^*$congruent} if there is some invertible matrix $S\in\C^{n\times n}$ such that $ SAS^*=B$,
\item {\em congruent} if there is some invertible matrix $S\in\C^{n\times n}$ such that $SAS^\top=B$,
\item {\em real-congruent} if there is some invertible matrix $S\in\R^{n\times n}$ such that $SAS^\top=B$.
\end{itemize}
\end{definition}
\noindent The interesting case for real congruence is when $A,B$ both have real entries.

The following theorem will be fundamental in the proofs of the main results of this paper. It states that if two real matrices are $^*$\!congruent then they are also real-congruent.

\begin{theorem}\label{di_th}{\rm \cite[Th. 1.1]{di02}}
 Let $A,B\in\R^{n\times n}$ be such that $PAP^* = B$,  for some invertible $P\in\C^{n\times n}$. Then, there exists an invertible $Q\in\R^{n\times n}$ such that
$QAQ^\top=B$.
\end{theorem}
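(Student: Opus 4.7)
The plan is to recast the theorem as a non-abelian Galois descent problem. Write $P = P_1 + \ii P_2$ with $P_1, P_2 \in \R^{n\times n}$. Since $A$ and $B$ are real, complex-conjugating the equation $PAP^* = B$ yields $\bar P A \bar P^* = B$, so $\bar P$ is also a $^*$-congruence from $A$ to $B$. Consequently
\[
  U \;:=\; P^{-1}\bar P \;\in\; \mathcal S(A):=\{X\in\mathrm{GL}_n(\C):XAX^*=A\},
\]
and $U$ satisfies the cocycle identity $U\bar U = I$. Conversely, any matrix of the form $Q = PS$ with $S\in\mathcal S(A)$ satisfies $QAQ^* = B$, and for real $Q$ this coincides with $QAQ^\top = B$, which is exactly what we want. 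The reality constraint $\bar Q = Q$ translates into $\bar S = US$. Thus the theorem is equivalent to the following Hilbert 90--type statement inside $\mathcal S(A)$: every $U\in\mathcal S(A)$ with $U\bar U = I$ is of the form $\bar S S^{-1}$ for some $S\in\mathcal S(A)$.

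To establish this descent, I would reduce to the case where $A$ is in Horn--Sergeichuk $^*$-congruence canonical form. This reduction is legitimate because the substitution $A\mapsto RAR^*$ (for any invertible $R\in\mathrm{GL}_n(\C)$) conjugates $\mathcal S(A)$ by $R$ and replaces $U$ by $RUR^{-1}$, preserving both the cocycle hypothesis and the desired conclusion. Once $A$ is a direct sum of Horn--Sergeichuk canonical blocks, $\mathcal S(A)$ splits as a product of stabilizers of single blocks, each of which is a concrete upper-triangular Toeplitz-type group (or a product of two such for non-real-parameter blocks). Using the uniqueness of the canonical form and the fact that conjugation by $U$ preserves it, one can further arrange $U$ itself to respect the block decomposition, reducing the problem to the descent on a single canonical block. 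On each block, the equation $\bar S = US$ can be attacked by an explicit construction modelled on the classical Hilbert 90 averaging $S = X + \bar U\,\bar X$, which automatically satisfies $\bar S = US$ by the cocycle identity.

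The expected main obstacle is this last step. While the Hilbert 90 averaging produces a complex $S\in\mathrm{GL}_n(\C)$ solving $\bar S = US$ for generic $X$, forcing $S$ to \emph{also} lie in $\mathcal S(A)$---which is closed under multiplication but not under addition---requires case-by-case care on each canonical block. The subtlest case is that of non-real parameters, where two Galois-conjugate blocks of $A$ are coupled through $U$ and must be combined into a single real transformation; this is precisely the combinatorial mechanism reflected, downstream, in the appearance of four block types in the real-congruence canonical form of Theorem~\ref{main_th}, as opposed to three in the $^*$-congruence canonical form.
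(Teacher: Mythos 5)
First, a point of reference: the paper does not prove this statement at all --- it is quoted verbatim from \cite{di02}, and the authors explicitly remark that the known proof is ``considerably more involved'' than its similarity counterpart. So your proposal is being measured against the cited literature rather than an in-paper argument. Your opening reformulation is correct and is the natural frame: since $A,B$ are real, $\bar P$ is a second $^*$congruence from $A$ to $B$, $U=P^{-1}\bar P$ lies in $\mathcal{S}(A)=\{X:XAX^*=A\}$ and satisfies $U\bar U=I$, and the theorem is equivalent to every such $U$ being a coboundary with $S\in\mathcal{S}(A)$. (Your reality constraint should read $\bar S=\bar U S=U^{-1}S$ rather than $\bar S=US$; this is harmless since $\bar U=U^{-1}$ and $U\mapsto\bar U$ preserves the class of cocycles.)

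Beyond that, however, the proposal is a plan rather than a proof, and two steps are genuinely defective. (1) The reduction to Horn--Sergeichuk form is not ``legitimate'' as stated: for a complex $R$, the substitution $U\mapsto RUR^{-1}$ gives $RUR^{-1}\cdot\overline{RUR^{-1}}=RUR^{-1}\bar R\bar U\bar R^{-1}$, which is not $I$ in general, so the cocycle condition is \emph{not} preserved. Worse, when the canonical form $C_A=RAR^*$ is not real (blocks $\mu\Gamma_k$ or $H_{2k}(\mu)$ with $\mu\notin\R$), entrywise conjugation does not even map $\mathcal{S}(C_A)$ into itself; the descent datum must be transported as a twisted antiholomorphic involution $X\mapsto V\bar XV^{-1}$, which your sketch never sets up. Relatedly, $\mathcal{S}(A)$ is \emph{not} a product of single-block stabilizers once canonical blocks repeat (already $\mathcal{S}(0_{2\times 2})=\mathrm{GL}_2(\C)$), so passing to one block or one conjugate pair requires an isotypic-component argument that is only gestured at. (2) The entire content of the theorem sits in the step you defer: showing that the (twisted) cocycle on each block stabilizer is a coboundary \emph{within that group}. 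As you concede, the Hilbert~90 average $X+\bar U\bar X$ does not lie in $\mathcal{S}(A)$, and no mechanism for repairing this is offered; note also that the conjugate-pair case you single out as subtlest is in fact the easy one (a Shapiro-type induced-group argument makes its $H^1$ vanish), whereas the real-parameter blocks, whose stabilizers are indefinite isometry-type groups, are where the vanishing is nontrivial. Until that per-block descent is actually carried out, nothing beyond the (standard) reformulation has been established.
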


The counterpart of Theorem \ref{di_th} for the {\em similarity} of matrices is well-known and can be found in \cite[Theorem 1.3.29]{hj13}, for instance. We remark that the proof of Theorem \ref{di_th} is considerably more involved than the one of \cite[Theorem 1.3.29]{hj13}.

Matrix pairs, or matrix pencils, will play also an important role in this paper. So, we recall some related definitions.
\begin{definition} \label{def.equivpairs}
Let $A,B,C,D \in \C^{m\times n}$. The matrix pairs $(A,B)$ and $(C,D)$ are
\begin{itemize}
    \item {\em strictly equivalent} if there are invertible matrices $R \in \C^{m\times m}$ and $S \in \C^{n\times n}$ such that $RAS = C$ and $RBS = D$,
    \item {\em strictly real-equivalent} if there are invertible matrices $R \in \R^{m\times m}$ and $S \in \R^{n\times n}$ such that $RAS = C$ and $RBS = D$.
\end{itemize}
\end{definition}
\noindent The interesting case for strict real-equivalence is when $A,B,C,D$ have real entries.

A matrix pair $(A,B)$ can also be seen as a matrix pencil $\lambda A + B$, where $\lambda$ is a variable \cite[Ch. XII]{gant}. We will use both views throughout the paper. We emphasize that, in this paper, in the pencil view the first matrix of a pair $(A,B)$ is the leading coefficient of the corresponding pencil and that we use the ``$+$'' sign in the definition of the pencil. We will denote strict equivalence of pairs by $\approx$ and strict real-equivalence by $\stackrel{r}{\approx}$. For simplicity, given matrices $R,A,B,S$ of adequate sizes, we define the product of matrices times matrix pairs as $R(A,B)S := (RAS,RBS)$.

The following result states that if two real pencils are strictly equivalent then they are also strictly real-equivalent. We include the proof since we have not found it in the literature.

\begin{lemma} \label{lemm.strictrealpenc} Let $(A,B) \in \R^{m\times n} \times \R^{m\times n}$ and
 $(C,D) \in \R^{m\times n} \times \R^{m\times n}$ be such that $R (A,B) = (C,D) S$ for some invertible matrices $R \in \C^{m\times m}$ and $S \in \C^{n\times n}$. Then there exist invertible matrices $\widetilde R \in \R^{m\times m}$ and $\widetilde S \in \R^{n\times n}$ such that $\widetilde R (A,B) = (C,D) \widetilde S$.
\end{lemma}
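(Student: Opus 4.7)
The plan is to use the classical ``real pencil trick'': write the complex transformations $R$ and $S$ as $R = R_1 + \ii R_2$ and $S = S_1 + \ii S_2$ with real matrices $R_1,R_2,S_1,S_2$, and look for a single real parameter $t$ such that $\widetilde R := R_1 + t R_2$ and $\widetilde S := S_1 + t S_2$ work simultaneously.

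First, I would substitute these decompositions into the two identities $RA = CS$ and $RB = DS$. Since $A,B,C,D$ are real, equating real and imaginary parts yields the four real identities
\begin{equation*}
R_1 A = C S_1,\quad R_2 A = C S_2,\quad R_1 B = D S_1,\quad R_2 B = D S_2.
\end{equation*}
Multiplying the second pair by a scalar $t$ and adding to the first shows that, for every $t \in \C$,
\begin{equation*}
(R_1 + t R_2)\,(A,B) \;=\; (C,D)\,(S_1 + t S_2).
\end{equation*}
Hence any real $t$ for which both $R_1 + tR_2$ and $S_1 + tS_2$ are invertible will provide the desired real matrices $\widetilde R$ and $\widetilde S$.

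Next I would argue such a $t$ exists. Consider the polynomials $p(z) := \det(R_1 + z R_2)$ and $q(z) := \det(S_1 + z S_2)$ in the indeterminate $z$. At $z = \ii$ we obtain $p(\ii) = \det R \neq 0$ and $q(\ii) = \det S \neq 0$, so neither $p$ nor $q$ is identically zero. Therefore the product $p(z) q(z)$ is a nonzero polynomial and has only finitely many roots in $\C$, so in particular only finitely many real roots. Picking any real $t$ outside this finite set makes both $\widetilde R$ and $\widetilde S$ invertible, and the identity above gives $\widetilde R (A,B) = (C,D)\,\widetilde S$, as required.

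The argument is essentially routine; the only point that needs care is noticing that the determinants must be viewed as honest polynomials in a scalar variable (so that nonvanishing at $z = \ii$ forces nonvanishing at all but finitely many real $t$), rather than trying to pick $t$ directly from some topological or continuity argument. This ``single parameter family'' device is precisely what replaces the more delicate reduction needed in Theorem \ref{di_th}, where the congruence setting couples $R$ with $R^*$ and the same polynomial trick does not apply.
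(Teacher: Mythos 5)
Your proposal is correct and follows essentially the same route as the paper's proof: decompose $R$ and $S$ into real and imaginary parts, observe that the one-parameter family $R_1 + t R_2$, $S_1 + t S_2$ intertwines $(A,B)$ and $(C,D)$ for every $t$, and pick a real $t$ avoiding the finitely many roots of $\det(R_1+tR_2)\det(S_1+tS_2)$, which is a nonzero polynomial because it does not vanish at $t=\ii$. No gaps.
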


\begin{proof}
Let $R = R_r + \ii R_\ii$ with $R_r, R_\ii \in \R^{m\times m}$ and $S = S_r + \ii S_\ii$ with $S_r, S_\ii \in \R^{n\times n}$, i.e., we express $R$ and $S$ in terms of their real and imaginary parts. Then, $R (A,B) = (C,D) S$ implies $R_r (A,B) = (C,D) S_r$ and $R_\ii (A,B) = (C,D) S_\ii$. So, for any number $\tau$, $(R_r + \tau R_\ii) (A,B) = (C,D) (S_r + \tau S_\ii)$. It only remains to prove that we can choose $\tau_0 \in \R$, such that $\det (R_r + \tau_0 R_\ii) \ne 0$ and $\det (S_r + \tau_0 S_\ii) \ne 0$. For this purpose note that the polynomials in $\tau$, $p(\tau) = \det (R_r + \tau R_\ii)$ and $q(\tau) = \det (S_r + \tau S_\ii)$ are not identically zero since  $p(\ii) \ne 0$ and  $q(\ii) \ne 0$. Moreover, $p(\tau)$ has at most $m$ complex roots and $q(\tau)$ has at most $n$ complex roots. Thus, we can take $\tau_0$ to be equal to any real number which is not a root of $p(\tau) \, q(\tau)$ and $\widetilde R = R_r + \tau_0 R_\ii$ and $\widetilde S = S_r + \tau_0 S_\ii$.
\end{proof}

\subsection{Canonical blocks} The canonical forms considered in this work are direct sums of certain canonical blocks that are described in this subsection. We also investigate some properties of these canonical blocks. As usual, the direct sum of two matrices is defined as $A\oplus B := \left[\begin{smallmatrix}
A & 0 \\ 0 & B \end{smallmatrix} \right]$.

The following matrices were introduced in \cite[p. 1011]{hs06}, for each integer $k\geq1$ and $\mu\in\C$:
\begin{eqnarray}
   J_k(\mu):= \begin{bmatrix}
       \mu &1\\&\ddots&\ddots\\&&\mu&1\\&&&\mu
   \end{bmatrix}_{k\times k},\label{jordanblock}\\
   \Gamma_k:=\left[\begin{array}{c@{\mskip8mu}c@{\mskip8mu}c@{\mskip8mu}c@{\mskip8mu}c@{\mskip8mu}c}0&&&&&(-1)^{k+1}\\[-4pt]
&&&&\iddots&(-1)^k\\[-4pt]&&&-1&\iddots&\\&&1&1&&\\&-1&-1&&&\\1&1&&&&0\end{array}\right]_{k\times k},\label{gamma}\\
    H_{2k}(\mu):=\begin{bmatrix}
    0&I_k\\
     J_k(\mu)&0
    \end{bmatrix}_{2k\times2k}\label{h2k}.
\end{eqnarray}
The matrix in \eqref{jordanblock} is a Jordan block associated with the eigenvalue $\mu$ (see, for instance, \cite[Def. 3.1.1]{hj13}). We highlight that $\Gamma_1=1$ and $J_1(\mu)=\mu$.

Following the notation in \cite[p. 202]{hj13}, for $a,b\in\R$ and $k\geq1$, we define the matrices
\begin{equation}\label{matrixc}
C(a,b):=\begin{bmatrix}
        a&b\\-b&a
    \end{bmatrix} \quad \mbox{and} \quad
    C_{2k}(a,b):=\begin{bmatrix}
        C(a,b)&I_2\\&C(a,b)&\ddots\\&&\ddots&I_2\\&&&C(a,b)
    \end{bmatrix}_{2k\times2k}.
\end{equation}
Note that we write $C_{2k}(a,b)$ instead of $C_k(a,b)$ (as in \cite{hj13}) to highlight that the size of the matrix is $2k\times 2k$, as we have done with $H_{2k}(\mu)$.

We will also need the matrix (again for $a,b\in\R$ and $k\geq1$):
\begin{equation}\label{h4k}
    \widehat H_{4k}(a,b):=\begin{bmatrix}
            0&I_{2k}\\C_{2k}(a,b)&0
        \end{bmatrix}.
\end{equation}

Lemmas \ref{mu_lemma} and \ref{cab_lemma} are slight variants of some identities from \cite[pp. 201-202]{hj13}. They will be used later. Lemma \ref{mu_lemma} follows from a direct computation.

\begin{lemma}\label{mu_lemma}
    Let $\mu=a+\ii b$, with $a,b\in\R$ and $b\neq0$, and $D(\mu) :=\left[\begin{smallmatrix}
       \mu&0\\0&\overline\mu
   \end{smallmatrix}\right]$. Then the unitary matrix $W = \frac{1}{\sqrt{2}}\left[\begin{smallmatrix}
       -\ii&-\ii\\1&-1
   \end{smallmatrix}\right]$ satisfies $W D(\mu) W^* = C(a,b)$.
\end{lemma}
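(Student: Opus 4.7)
The statement is purely computational, as the authors themselves flag (``follows from a direct computation''), so my plan is simply to verify the identity by carrying out the $2\times 2$ matrix multiplications and confirming the entries. There is no real obstacle here; the only thing to get right is tracking signs and the factor $\tfrac{1}{2}$ coming from the two copies of $\tfrac{1}{\sqrt{2}}$.

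First I would write down
$$
W^* \;=\; \frac{1}{\sqrt{2}}\begin{bmatrix} \ii & 1 \\ \ii & -1 \end{bmatrix},
$$
and (optionally, as a sanity check) verify that $W$ is unitary by computing $WW^* = \tfrac{1}{2}\bigl[\begin{smallmatrix} 1+1 & -\ii+\ii \\ \ii-\ii & 1+1 \end{smallmatrix}\bigr]=I_2$, which also tells me that I may freely replace $W^{-1}$ by $W^*$ in the conjugation.

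Next I would carry out the product in two steps. The left multiplication gives
$$
W\,D(\mu) \;=\; \frac{1}{\sqrt{2}} \begin{bmatrix} -\ii\mu & -\ii\overline\mu \\ \mu & -\overline\mu \end{bmatrix}.
$$
Multiplying on the right by $W^*$ then produces a matrix whose four entries I would collect using the elementary identities $\mu+\overline\mu = 2a$ and $\mu-\overline\mu = 2\ii b$. Specifically, the $(1,1)$ and $(2,2)$ entries each reduce to $\tfrac{1}{2}(\mu+\overline\mu)=a$, while the $(1,2)$ entry is $\tfrac{1}{2}\,\ii(\overline\mu-\mu) = b$ and the $(2,1)$ entry is $\tfrac{1}{2}\,\ii(\mu-\overline\mu) = -b$. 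Comparing with \eqref{matrixc} gives $WD(\mu)W^* = C(a,b)$, as required.

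The only mild subtlety worth flagging is that the hypothesis $b\neq 0$ is \emph{not} actually needed for the algebraic identity itself; the formulas above go through for any $a,b\in\R$. The role of $b\neq 0$ is only in the context where this lemma will be applied (so that $\mu$ and $\overline\mu$ are genuinely distinct and $D(\mu)$ really is the diagonalization of a non-real eigenvalue pair), so I would not include $b\neq 0$ as a used hypothesis in the calculation itself.
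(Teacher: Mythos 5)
Your computation is correct and is exactly the "direct computation" the paper invokes (the paper omits the proof entirely, stating only that Lemma \ref{mu_lemma} follows from a direct computation). All entries check out, and your observation that $b\neq 0$ is not needed for the identity itself is also accurate.
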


In the proof of the next lemma, as well as in many other parts of the paper, we use the Kronecker product $A \otimes B$ of two matrices and some of its properties. The reader can find a lot of information about it in \cite[Ch. 4]{hj-topics}.

\begin{lemma}\label{cab_lemma}
    Let $\mu=a+\ii b$, with $a,b\in\R$ and $b\neq0$. Then there is a unitary matrix $U\in \C^{2k \times 2k}$ such that
    $
    U\left[\begin{smallmatrix}
        J_k(\mu)&0\\0&J_k(\overline\mu)
    \end{smallmatrix}\right]U^*=C_{2k}(a,b).
    $
    \end{lemma}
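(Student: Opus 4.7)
The plan is to realize both matrices in the claim as Kronecker sums and then connect them through the perfect shuffle permutation together with Lemma \ref{mu_lemma}.

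Writing $J_k(\mu) = \mu I_k + J_k(0)$ and using the Kronecker product convention $A \otimes B = (a_{ij} B)$, one immediately has
$$
\begin{bmatrix} J_k(\mu) & 0 \\ 0 & J_k(\overline\mu) \end{bmatrix} \;=\; D(\mu) \otimes I_k \,+\, I_2 \otimes J_k(0),
$$
while a direct inspection of the definition \eqref{matrixc} yields
$$
C_{2k}(a,b) \;=\; I_k \otimes C(a,b) \,+\, J_k(0) \otimes I_2.
$$

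To reconcile the two orderings of Kronecker factors, I would invoke the perfect shuffle permutation $P \in \R^{2k \times 2k}$, which is real orthogonal (hence unitary) and satisfies $P(X \otimes Y) P^\top = Y \otimes X$ for every $X \in \C^{2\times 2}$ and $Y \in \C^{k \times k}$ (see \cite[Ch. 4]{hj-topics}). Conjugating the first display by $P$ swaps the two factors in each summand and produces $I_k \otimes D(\mu) + J_k(0) \otimes I_2$. Now, with $W$ the $2 \times 2$ unitary provided by Lemma \ref{mu_lemma}, the matrix $I_k \otimes W$ is unitary, and the mixed-product property of $\otimes$, together with $W D(\mu) W^* = C(a,b)$ and $W I_2 W^* = I_2$, gives
$$
(I_k \otimes W)\bigl(I_k \otimes D(\mu) + J_k(0) \otimes I_2\bigr)(I_k \otimes W)^* \;=\; I_k \otimes C(a,b) + J_k(0) \otimes I_2 \;=\; C_{2k}(a,b).
$$

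Consequently, $U := (I_k \otimes W)\, P \in \C^{2k \times 2k}$ is a unitary matrix accomplishing the desired conjugation. I expect the main subtlety to be nothing more than bookkeeping the Kronecker ordering, which the perfect shuffle $P$ is tailored to handle; since Lemma \ref{mu_lemma} already disposes of the essential $2 \times 2$ diagonalisation, no deeper obstacle should arise.
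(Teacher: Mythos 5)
Your proof is correct and follows essentially the same route as the paper: both construct $U=(I_k\otimes W)P$ with $P$ a permutation interleaving the two Jordan blocks and $W$ from Lemma \ref{mu_lemma}. The only difference is that the paper cites \cite[p.~201]{hj13} for the permutation step, whereas you derive it self-containedly from the Kronecker-sum decompositions and the perfect-shuffle identity of Lemma \ref{lemm.commukron}, which is a perfectly valid (and slightly more explicit) justification.
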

\begin{proof}
   According to \cite[p. 201]{hj13}, there is a permutation matrix $P$ such that
   $$
   (I_k\otimes W)P\left[\begin{smallmatrix}
       J_k(\mu)&0\\0&J_k(\overline\mu)
   \end{smallmatrix}\right]P^\top(I_k\otimes W^*)=(I_k\otimes W)\begin{bmatrix}
       D(\mu)&I_2\\&D(\mu)&\ddots\\&&\ddots&I_2\\&&&D(\mu)
   \end{bmatrix}(I_k\otimes W^*)=C_{2k}(a,b),
   $$
   where $D(\mu)$ and $W$ are as in Lemma \ref{mu_lemma}. Setting $U=(I_k\otimes W)P$ we get the result.
\end{proof}

The following tridiagonal matrices were introduced in \cite[eqs. (6) and (7)]{fhs08}, for each integer $k\geq 1$ and $\mu \in \C$:
\begin{equation} \label{eq.blocktri1}
T_k (\mu) := \left[
\begin{array}{ccccc}
0 & 1 & & & 0\\
\mu & 0 & 1 & & \\
& \mu & 0 & \ddots & \\
& & \ddots & \ddots & 1 \\
0 & & & \mu & 0
\end{array}
\right]_{k \times k}  \qquad (T_1 (\mu) = 0),
\end{equation}
\begin{equation} \label{eq.blocktri2}
\widetilde{\Gamma}_k := \left[
\begin{array}{cccccc}
1&1&&&&0 \\
-1&0&1&&& \\
&1&0&1&& \\
&&-1&0&1& \\
&&&1&0& \ddots \\
0&&&&\ddots& \ddots
\end{array} \right]_{k \times k}  \qquad (\widetilde{\Gamma}_1 = 1).
\end{equation}
From $T_k (\mu)$ in \eqref{eq.blocktri1} and the first matrix in \eqref{matrixc}, we define, for $a,b \in \R$ and $k \geq 1$, the matrix:
\begin{equation} \label{eq.blocktri3}
\widehat{T}_{2k} (a,b) := \left[
\begin{array}{ccccc}
0 & I_2 & & & 0\\
C(a,b) & 0 & I_2 & & \\
& C(a,b) & 0 & \ddots & \\
& & \ddots & \ddots & I_2 \\
0 & & & C(a,b) & 0
\end{array}
\right]_{2k \times 2k} \qquad (\widehat{T}_{2} (a,b) = 0_{2\times 2}).
\end{equation}
We warn the reader that in the main results of this paper where the matrix in \eqref{eq.blocktri3} plays a key role, i.e., Theorems \ref{second_main_th}, \ref{thm.2strealCFandKCF}, and \ref{thm.relation2}, the parameter $k$ defining the number of $2\times 2$ blocks is an even number and so the matrix appears written as $\widehat{T}_{4k} (a,b)$.

The next lemma relates the matrices \eqref{eq.blocktri1} and \eqref{eq.blocktri3}. It resembles Lemma \ref{cab_lemma}.

\begin{lemma} \label{lemm.reltridiablocks} Let $\mu = a + \ii b$, with $a,b \in \R$ and $b \ne 0$. Then there is a unitary matrix $V \in \C^{2k \times 2k}$ such that $V \left[ \begin{smallmatrix}
T_k (\mu) & 0 \\ 0 & T_k (\overline \mu)
\end{smallmatrix}\right] V^*= \widehat{T}_{2k} (a,b)$.
\end{lemma}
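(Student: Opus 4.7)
The plan is to imitate the proof of Lemma \ref{cab_lemma}, but for the tridiagonal matrices $T_k(\mu)$ instead of the Jordan blocks $J_k(\mu)$.

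The first step is to choose the permutation matrix $P$ of size $2k \times 2k$ that interleaves the two blocks, namely the permutation that sends the $i$-th standard basis vector of the upper block to position $2i-1$ and the $i$-th standard basis vector of the lower block to position $2i$, for $i = 1,\ldots,k$. I expect that conjugating by $P$ turns $\left[\begin{smallmatrix} T_k(\mu) & 0 \\ 0 & T_k(\overline\mu) \end{smallmatrix}\right]$ into a block-tridiagonal matrix with $2\times 2$ blocks: the diagonal blocks are $0_{2\times 2}$ (since $T_k(\mu)$ and $T_k(\overline\mu)$ both have zero diagonal), the super-block-diagonal blocks are $I_2$ (since both tridiagonals have $1$ on the superdiagonal), and the sub-block-diagonal blocks are $D(\mu)=\diag(\mu,\overline\mu)$ (from the subdiagonal entries $\mu$ and $\overline\mu$). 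This is essentially the same permutation-based step that appears in the proof of Lemma \ref{cab_lemma} (cf.\ \cite[p.\ 201]{hj13}), just applied to a matrix with zero block-diagonal part.

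The second step is to conjugate by $I_k \otimes W$, where $W$ is the unitary matrix from Lemma \ref{mu_lemma}. Since $(I_k \otimes W) M (I_k \otimes W^*)$ acts block-wise on any $2\times 2$-block matrix $M$ by replacing each block $M_{ij}$ with $W M_{ij} W^*$, this conjugation leaves the $0_{2\times 2}$ diagonal blocks and the $I_2$ super-blocks unchanged (because $W W^*=I_2$), and transforms each $D(\mu)$ sub-block into $C(a,b)$ by Lemma \ref{mu_lemma}. The resulting matrix is precisely $\widehat{T}_{2k}(a,b)$ as defined in \eqref{eq.blocktri3}.

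Finally, setting $V = (I_k \otimes W)P$ gives the desired unitary matrix, since both $P$ and $I_k \otimes W$ are unitary (recall that the Kronecker product of unitary matrices is unitary) and $P^\top = P^*$ because $P$ is a real permutation matrix. There is no serious obstacle here: the only point that requires a moment's care is verifying that the interleaving permutation genuinely produces the claimed block-tridiagonal pattern, which is an immediate bookkeeping check on the positions of the nonzero entries of $T_k(\mu)$ and $T_k(\overline\mu)$.
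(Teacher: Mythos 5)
Your proposal is correct and follows essentially the same route as the paper: the same interleaving permutation $P$ (first block to odd positions, second block to even positions), followed by conjugation with $I_k\otimes W$ to turn each $D(\mu)$ sub-block into $C(a,b)$ via Lemma \ref{mu_lemma}, and $V=(I_k\otimes W)P$.
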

\begin{proof} Let $P \in  \C^{2k \times 2k}$ be the permutation matrix which corresponds to permuting the rows \break$\begin{bmatrix}
1, &2, & \ldots ,& k, & k+1, & k+2, & \ldots , & 2k
\end{bmatrix}$ of $I_{2k}$ as follows
\begin{equation*}
\begin{bmatrix}
1, & k+1, &2 & k+2, & 3, & k +3, & \ldots , & k, & 2k
\end{bmatrix},
\end{equation*}
i.e., the first $k$ rows of $I_{2k}$ are in the odd positions in $P$ and the last $k$ rows of $I_{2k}$ are in the even positions, preserving in both cases the relative order.
If $W$ and $D(\mu)$ are the matrices in Lemma \ref{mu_lemma}, then
\begin{align*}
(I_k \otimes W) P \left[ \begin{array}{cc}
T_k (\mu) & 0 \\ 0 & T_k (\overline \mu)
\end{array}\right] P^\top (I_k \otimes W^*) & = (I_k \otimes W)
\left[
\begin{array}{ccccc}
0 & I_2 & & & 0\\
D(\mu) & 0 & I_2 & & \\
& D(\mu) & 0 & \ddots & \\
& & \ddots & \ddots & I_2 \\
0 & & & D(\mu) & 0
\end{array}
\right]
(I_k \otimes W^*) \\ & = \widehat{T}_{2k} (a,b).
\end{align*}
Setting $V= (I_k \otimes W) P$ concludes the proof.
\end{proof}

We will also use the following result.

\begin{lemma}\label{similar_lemma}
    If $A$ and $B$ are square $n\times n$ similar matrices, then $\left[\begin{smallmatrix}
        0&I_n\\A&0
    \end{smallmatrix}\right]$ and $\left[\begin{smallmatrix}
        0&I_n\\B&0
    \end{smallmatrix}\right]$ are congruent and $^*$\!congruent.
\end{lemma}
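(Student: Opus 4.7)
The plan is to exhibit explicit block-diagonal matrices that realize the congruence and $^*$congruence, and then verify the identities by a direct block multiplication. Since $A$ and $B$ are similar, there exists an invertible $S \in \C^{n\times n}$ with $B = S^{-1} A S$. I will try the simplest possible ansatz, namely block-diagonal transformation matrices, since the matrices we want to transform have a block anti-diagonal structure.

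More precisely, for the congruence assertion, I would set
\[
T := \begin{bmatrix} S^\top & 0 \\ 0 & S^{-1} \end{bmatrix},
\]
which is invertible because $S$ is, and compute
\[
T \begin{bmatrix} 0 & I_n \\ A & 0 \end{bmatrix} T^\top = \begin{bmatrix} 0 & S^\top (S^{-1})^\top \\ S^{-1} A S & 0 \end{bmatrix} = \begin{bmatrix} 0 & I_n \\ B & 0 \end{bmatrix},
\]
using $S^\top (S^{-1})^\top = (S^{-1} S)^\top = I_n$ and $S^{-1} A S = B$. For the $^*$congruence assertion, I would instead take
\[
T := \begin{bmatrix} S^* & 0 \\ 0 & S^{-1} \end{bmatrix},
\]
and an entirely analogous computation (replacing $S^\top$ by $S^*$ and using $S^* (S^{-1})^* = I_n$) yields $T \bigl[\begin{smallmatrix} 0 & I_n \\ A & 0 \end{smallmatrix}\bigr] T^* = \bigl[\begin{smallmatrix} 0 & I_n \\ B & 0 \end{smallmatrix}\bigr]$. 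In both cases invertibility of $T$ is immediate from invertibility of $S$.

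There is no real obstacle here: the two statements reduce to two lines of block matrix algebra each, and the key insight is just that similarity $B = S^{-1} A S$ can be rewritten as $B = (S^\top)^{-\top} A \, S^\top$ or $B = (S^*)^{-*} A \, S^*$, which matches precisely the form dictated by multiplying the anti-diagonal blocks from the left by one block of $T$ and from the right by the transpose (or conjugate transpose) of the other block.
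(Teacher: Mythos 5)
Your proposal is correct and coincides with the paper's own proof: the paper writes the similarity as $SAS^{-1}=B$ and uses $M=\left[\begin{smallmatrix} S^{-\star}&0\\0&S\end{smallmatrix}\right]$ with $\star\in\{\top,*\}$, which is exactly your $T$ after the relabeling $S\mapsto S^{-1}$. The verification is the same two-line block computation.
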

\begin{proof}
    Let $S$ be nonsingular such that $SAS^{-1}=B$. Then $M\left[\begin{smallmatrix}
        0&I_n\\A&0
    \end{smallmatrix}\right]M^\star=\left[\begin{smallmatrix}
        0&I_n\\B&0
    \end{smallmatrix}\right]$, where $M=\left[\begin{smallmatrix}
        S^{-\star}&0\\0&S
    \end{smallmatrix}\right]$, and with $\star$ being either $\top$ or $*$.
\end{proof}

Lemma \ref{lemm.problocksHS} states two properties of the matrices in \eqref{gamma} and \eqref{h2k} which will be often used. The proof is omitted because it is essentially provided in \cite[p. 1016]{hs06}, with $H_{2k} (\mu)^*$ replaced by $H_{2k} (\mu)^\top$.

\begin{lemma} \label{lemm.problocksHS} Let $\Gamma_k$ and $H_{2k} (\mu)$ be the matrices defined in \eqref{gamma} and \eqref{h2k}. Then
\begin{enumerate}
    \item  $\Gamma_k^{-\top} \Gamma_k$ is similar to $J_k ((-1)^{k+1})$, and
\item $H_{2k} (\mu)^{-*} H_{2k} (\mu)$ is similar to $\left[ \begin{smallmatrix}
J_k (\mu) & 0 \\ 0 & J_k (1/\overline \mu)
\end{smallmatrix}\right]$.\label{h2k-cosquare}
\end{enumerate}
\end{lemma}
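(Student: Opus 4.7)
The plan is to handle the two assertions separately. For statement~(1), the key observation is that $\Gamma_k$ has only real entries, so $\Gamma_k^\top=\Gamma_k^*$ and therefore $\Gamma_k^{-\top}\Gamma_k=\Gamma_k^{-*}\Gamma_k$. The similarity of the latter to $J_k((-1)^{k+1})$ is the content of the explicit computation on \cite[p.~1016]{hs06}, and this transfers immediately to $\Gamma_k^{-\top}\Gamma_k$. No further work is required for this part.

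For statement~(2), I would proceed by a direct $2\times 2$-block calculation. Note first that $H_{2k}(\mu)$ is invertible precisely when $\mu\ne 0$, which is implicit in the statement. Block inversion gives
\[
H_{2k}(\mu)^{-1}=\begin{bmatrix} 0 & J_k(\mu)^{-1} \\ I_k & 0 \end{bmatrix},\qquad H_{2k}(\mu)^{-*}=\begin{bmatrix} 0 & I_k \\ J_k(\mu)^{-*} & 0 \end{bmatrix},
\]
and multiplying produces $H_{2k}(\mu)^{-*}H_{2k}(\mu)=J_k(\mu)\oplus J_k(\mu)^{-*}$. The remaining task is therefore to show $J_k(\mu)^{-*}\sim J_k(1/\overline\mu)$.

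For this final step I would argue that $J_k(\mu)^{-1}$ is nonderogatory with sole eigenvalue $1/\mu$ via the identity $J_k(\mu)^{-1}-(1/\mu)I_k=-(1/\mu)\,J_k(\mu)^{-1}(J_k(\mu)-\mu I_k)$; since the two factors on the right commute and $(J_k(\mu)-\mu I_k)^{k-1}$ has rank one, the $(k-1)$-th power of the left side is nonzero, forcing the minimal polynomial of $J_k(\mu)^{-1}$ to have degree $k$. Hence $J_k(\mu)^{-1}\sim J_k(1/\mu)$. Since transposition preserves similarity, and $J_k(1/\mu)^\top$ is permutation-similar to $J_k(1/\mu)$ via the flip permutation, we obtain $J_k(\mu)^{-\top}\sim J_k(1/\mu)$; taking the entrywise complex conjugate of this similarity relation yields $J_k(\mu)^{-*}\sim J_k(1/\overline\mu)$, as desired.

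The only nonroutine ingredient is the nonderogatory argument for $J_k(\mu)^{-1}$; everything else is mechanical block algebra plus standard manipulations of Jordan blocks under transposition and conjugation. Given the authors' indication that the proof essentially mirrors the computation in \cite[p.~1016]{hs06}, a self-contained write-up would merely reproduce that argument with the adjustment that a conjugate transpose replaces a transpose at the appropriate place.
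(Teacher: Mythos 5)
Your proposal is correct, and it follows essentially the route the paper intends: the paper omits the proof of this lemma entirely, deferring to the block computation on \cite[p.~1016]{hs06}, and your argument simply writes out that computation in full (the block inversion giving $H_{2k}(\mu)^{-*}H_{2k}(\mu)=J_k(\mu)\oplus J_k(\mu)^{-*}$, plus the standard nonderogatory argument showing $J_k(\mu)^{-*}\sim J_k(1/\overline{\mu})$, and the observation that part~1 reduces to the real case of the cited cosquare identity). All steps check out; no gaps.
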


The key property of the Kronecker product in Lemma \ref{lemm.commukron} will be applied in several proofs. It is a particular case of \cite[Cor. 4.3.10]{hj-topics}.

\begin{lemma} \label{lemm.commukron} Let $A \in \C^{n\times n}$ and $B \in \C^{p\times p}$. Then there is a permutation matrix $P(n,p) \in \C^{np\times np}$, depending only on the dimensions $n$ and $p$, such that
$A\otimes B = P(n,p) \, (B\otimes A) \,P(n,p)^\top$.
\end{lemma}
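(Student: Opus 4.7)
The plan is to realize $P(n,p)$ as the perfect-shuffle (commutation) permutation and derive the stated identity via the $\opvec$ operator. Concretely, I would define $P(n,p)$ as the unique permutation matrix characterized by
\[
P(n,p)\,\opvec(X)=\opvec(X^\top)\qquad\text{for every } X\in\C^{n\times p},
\]
where $\opvec$ denotes columnwise vectorization. Because this permutation merely records how the entries of a column-stacked $n\times p$ matrix get reordered into a column-stacked $p\times n$ matrix, it depends only on $n$ and $p$, and, being a permutation, satisfies $P(n,p)^\top=P(n,p)^{-1}=P(p,n)$.

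Next I would apply the standard identity $\opvec(XYZ)=(Z^\top\otimes X)\,\opvec(Y)$ twice. On the one hand, for any $Y\in\C^{p\times n}$,
\[
(A\otimes B)\,\opvec(Y)=\opvec(BYA^\top).
\]
On the other hand, taking transposes and using $(BYA^\top)^\top=AY^\top B^\top$,
\[
(B\otimes A)\,\opvec(Y^\top)=\opvec(AY^\top B^\top)=\opvec\bigl((BYA^\top)^\top\bigr).
\]
Combining these two displays with the defining property of the shuffle gives
\[
P(p,n)\,(A\otimes B)\,\opvec(Y)=(B\otimes A)\,P(p,n)\,\opvec(Y).
\]

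Since $Y$ is arbitrary and $\opvec$ is a linear bijection, this yields the matrix identity $P(p,n)(A\otimes B)=(B\otimes A)P(p,n)$, and hence $A\otimes B=P(p,n)^\top(B\otimes A)P(p,n)$; identifying the permutation in the statement with $P(n,p):=P(p,n)^\top$ (which still depends only on $n$ and $p$) yields the lemma.

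There is no real mathematical obstacle here; the only thing one has to be careful about is bookkeeping, namely keeping the conventions for columnwise $\opvec$ consistent and not swapping $P(n,p)$ with $P(p,n)$ at the end. Once the $\opvec$--Kronecker identity and the shuffle permutation are in place, the verification is essentially one line, which is why the statement is quoted from \cite[Cor.~4.3.10]{hj-topics}.
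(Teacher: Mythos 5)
Your argument is correct: the paper gives no proof of this lemma, simply citing \cite[Cor.~4.3.10]{hj-topics}, and your derivation via the commutation (perfect-shuffle) matrix and the identity $\opvec(XYZ)=(Z^\top\otimes X)\opvec(Y)$ is precisely the standard argument behind that citation. The bookkeeping checks out, including the identification $P(n,p)^\top=P(p,n)$ at the end, so nothing needs to be changed.
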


\subsection{Kronecker and real-Kronecker canonical forms of matrix pairs} We revise in this section the Kronecker Canonical Form (KCF) of a complex matrix pair under strict equivalence and the real Kronecker Canonical Form (real-KCF) of a real matrix pair under strict real-equivalence. The reason is that we will relate the canonical forms under $^*$\!congruence of a matrix $A\in \C^{n\times n}$ (resp. under real-congruence of a matrix $A\in \R^{n\times n}$) with the KCF of the complex pair $(A^* , A)$ (resp. with the real-KCF of the real pair $(A^\top , A)$). We will need the following two additional matrices for defining the KCF:
\begin{equation} \label{eq.singblocks}
F_k := \begin{bmatrix}
1 & 0 & & 0 \\
 & \ddots & \ddots & \\
 0 & & 1 & 0
\end{bmatrix}_{k \times (k+1)} \qquad \mbox{and} \quad
G_k := \begin{bmatrix}
0 & 1 & & 0 \\
 & \ddots & \ddots & \\
 0 & & 0 & 1
\end{bmatrix}_{k \times (k+1)} ,
\end{equation}
where $F_0 = G_0$ is the $0\times 1$ matrix. The direct sum of two matrix pairs is defined in a natural way as $(A,B) \oplus (C,D) = (A\oplus C, B\oplus D)$.

\begin{theorem} \label{thm.KCF} {\rm (KCF, \cite[Ch. XII, Theorem 5]{gant})} Each matrix pair $(A,B) \in \C^{m\times n} \times \C^{m\times n}$ is strictly equivalent to a direct sum, uniquely determined up to permutation of summands, of canonical pairs of the following four types
\begin{center}
    {\renewcommand{\arraystretch}{1.6}
     \renewcommand{\tabcolsep}{0.3cm}
    \begin{tabular}{|c|c|}\hline
         Regular pairs for finite eigenvalues & $(I_k, J_k(\mu ))$ with $\mu \in \C$ \\\hline
         Regular pairs for infinite eigenvalues & $(J_k(0),I_k)$  \\\hline
         Right singular pairs & $(F_k,G_k)$ \\\hline
         Left singular pairs & $(F_k^\top,G_k^\top)$ \\\hline
    \end{tabular}}
    \end{center}
\end{theorem}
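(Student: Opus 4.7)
The plan is to prove the KCF by separating the singular from the regular parts of the pencil $\lambda A + B$ and treating each in turn. Since this is a classical result due to Kronecker and carefully exposed by Gantmacher, I will follow the minimal-indices strategy. First, I consider the right null space of the pencil over $\C (\lambda)$, namely the set of polynomial vectors $x(\lambda) \in \C(\lambda)^n$ with $(\lambda A + B) x(\lambda) = 0$. If this space is non-trivial, it has a minimal basis in the sense of Forney, and the degrees of its elements are the right minimal indices $\varepsilon_1 \le \cdots \le \varepsilon_p$. A symmetric discussion on the left yields the left minimal indices $\eta_1 \le \cdots \le \eta_q$.

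Next, I would carry out the key extraction lemma: if $\varepsilon_1$ is the smallest right minimal index, then $(A,B)$ is strictly equivalent to $(F_{\varepsilon_1}, G_{\varepsilon_1}) \oplus (A', B')$, where $(A',B')$ has right minimal indices $\varepsilon_2 \le \cdots \le \varepsilon_p$ and the same left minimal indices and regular Kronecker invariants as $(A,B)$. The argument picks a polynomial vector $x(\lambda) = x_0 + \lambda x_1 + \cdots + \lambda^{\varepsilon_1} x_{\varepsilon_1}$ of smallest degree in the right null space, uses the equations $A x_{j-1} + B x_j = 0$ to build constant matrices $R$ and $S$ whose first columns/rows realize the $(F_{\varepsilon_1}, G_{\varepsilon_1})$ block, and then checks that the complementary pair has the claimed invariants. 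Iterating gives all right singular blocks, and transposing the whole construction gives all left singular blocks; so we may assume $(A,B)$ is regular, i.e., $\det(\lambda A + B) \not\equiv 0$.

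For the regular part I would apply Weierstrass' theorem, which follows from Jordan normal form. Pick $\lambda_0 \in \C$ with $\det(\lambda_0 A + B) \ne 0$, replace $(A,B)$ by the strictly equivalent pair $\bigl( (\lambda_0 A + B)^{-1} A, \, (\lambda_0 A + B)^{-1} B \bigr)$, and then diagonalize the first component via Jordan decomposition into its invertible and nilpotent parts. The invertible part, after a further shift and scaling, produces the finite-eigenvalue summands $(I_k, J_k(\mu))$, while the nilpotent part produces the infinite-eigenvalue summands $(J_k(0), I_k)$. Existence is then established.

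For uniqueness I would show that each of the four families of invariants $\{\varepsilon_i\}$, $\{\eta_j\}$, the elementary divisors at finite $\mu \in \C$, and the elementary divisors at $\infty$ is intrinsically defined by the pencil: the minimal indices through the dimensions of null spaces of $\lambda A + B$ over $\C(\lambda)$ and its transpose, and the elementary divisors through Smith normal form of $\lambda A + B$ and of its reversal $A + \mu B$. Strict equivalence preserves all these invariants, so the multiset of summands is determined. The main obstacle is the extraction lemma in the singular case: it requires a somewhat delicate choice of bases built from a minimal-degree null vector and a careful verification that the leftover pair has exactly the desired invariants; all other steps are relatively standard linear algebra on matrices over $\C$ or on polynomial modules.
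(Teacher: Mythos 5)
The paper does not prove this theorem; it is quoted verbatim from Gantmacher \cite[Ch.~XII, Theorem~5]{gant}, and your outline follows exactly the classical route of that reference (reduction of the singular part via minimal indices and an extraction lemma, Weierstrass/Jordan for the regular part, uniqueness via the complete system of invariants). Your sketch is a correct strategy, with the understanding that the extraction lemma you identify as the main obstacle is indeed where essentially all the work lies and is only described, not carried out.
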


The direct sum asserted in Theorem \ref{thm.KCF} is (up to permutation of its direct
 summands) the KCF of $(A,B)$ and it will be denoted by KCF$(A,B)$.

\begin{theorem} \label{thm.realKCF} {\rm (real-KCF, \cite[Theorem 3.2]{lr-siamrev-05})} Each matrix pair $(A,B) \in \R^{m\times n} \times \R^{m\times n}$ is strictly real-equivalent to a direct sum, uniquely determined up to permutation of summands, of real canonical pairs of the following five types
\begin{center}
    {\renewcommand{\arraystretch}{1.6}
     \renewcommand{\tabcolsep}{0.25cm}
    \begin{tabular}{|c|c|}\hline
         Regular pairs for finite real eigenvalues & $(I_k, J_k(\mu ))$ with $\mu \in \R$ \\\hline
         Regular pairs for infinite eigenvalues & $(J_k(0),I_k)$  \\\hline
         Regular pairs for finite complex-conjugate eigenvalues & $(I_{2k}, C_{2k} (a,b))$ with $a,b \in \R, b>0$ \\\hline
         Right singular pairs & $(F_k,G_k)$ \\\hline
         Left singular pairs & $(F_k^\top,G_k^\top)$ \\\hline
    \end{tabular}}
    \end{center}
\end{theorem}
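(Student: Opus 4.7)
My plan is to derive the real-KCF from the complex KCF of Theorem \ref{thm.KCF} by matching conjugate pairs of Jordan blocks, and then to upgrade the resulting strict equivalence over $\C$ to a strict real-equivalence via Lemma \ref{lemm.strictrealpenc}.

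First, I would apply Theorem \ref{thm.KCF} to $(A,B) \in \R^{m\times n} \times \R^{m\times n}$ viewed as a pair of complex matrices, obtaining invertible $R \in \C^{m\times m}$ and $S \in \C^{n\times n}$ with $R(A,B)S$ equal to a direct sum $\mathcal{J}$ of the four types of canonical pairs listed there. Since $A,B$ are real, conjugating this identity gives $\overline{R}(A,B)\overline{S} = \overline{\mathcal{J}}$, so $\mathcal{J}$ and $\overline{\mathcal{J}}$ are both KCFs of $(A,B)$. By the uniqueness clause of Theorem \ref{thm.KCF}, they agree as multisets of summands. Since the regular real-eigenvalue blocks, the infinite eigenvalue blocks, and the two singular types are all self-conjugate, this forces each summand $(I_k, J_k(\mu))$ with $\mu = a + \ii b$, $b > 0$, to be accompanied by a summand $(I_k, J_k(\overline\mu))$ of the same size.

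Next I would group each such conjugate pair and apply Lemma \ref{cab_lemma}: for $\mu = a + \ii b$ with $b > 0$ there is a unitary $U \in \C^{2k\times 2k}$ with $U(J_k(\mu) \oplus J_k(\overline\mu))U^* = C_{2k}(a,b)$, so $U(I_{2k}, J_k(\mu) \oplus J_k(\overline\mu))U^* = (I_{2k}, C_{2k}(a,b))$ is a strict equivalence of complex pairs. Replacing each conjugate pair in $\mathcal{J}$ by the corresponding block $(I_{2k}, C_{2k}(a,b))$ and leaving the other summands unchanged produces a real direct sum $\mathcal{K}$ of blocks of the five claimed types, with $(A,B) \approx \mathcal{K}$ over $\C$. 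Since $A$, $B$, and $\mathcal{K}$ are all real, Lemma \ref{lemm.strictrealpenc} then upgrades this to $(A,B) \stackrel{r}{\approx} \mathcal{K}$, which gives existence.

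For uniqueness, I would observe that the multiset of summands of any such $\mathcal{K}$ determines $\mathcal{J}$ by reversing the pairing: each $(I_{2k}, C_{2k}(a,b))$ block with $b > 0$ splits back into $(I_k, J_k(a+\ii b)) \oplus (I_k, J_k(a-\ii b))$ via Lemma \ref{cab_lemma}, while the other summand types stay fixed. Because the convention $b > 0$ removes the ambiguity between $\pm b$, this splitting is an injection on multisets of summands, so uniqueness of the real-KCF reduces to uniqueness of the complex KCF. I expect the only genuinely nontrivial ingredients to be the two already packaged in the excerpt: the unitary change of basis on a conjugate Jordan pair (Lemma \ref{cab_lemma}) and the descent from complex to real strict equivalence (Lemma \ref{lemm.strictrealpenc}); everything else is bookkeeping.
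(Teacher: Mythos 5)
Your argument is correct, but note that the paper does not prove this statement at all: Theorem \ref{thm.realKCF} is imported verbatim from \cite[Theorem 3.2]{lr-siamrev-05}, so there is no internal proof to compare against. What you have written is a valid, self-contained derivation of the real-KCF from the complex KCF of Theorem \ref{thm.KCF}, and it uses exactly the two tools the paper sets up for analogous purposes elsewhere: the conjugate-and-invoke-uniqueness trick to pair up blocks $(I_k,J_k(\mu))$ and $(I_k,J_k(\overline\mu))$, the unitary realification of a conjugate Jordan pair into $C_{2k}(a,b)$ via Lemma \ref{cab_lemma}, and the descent from complex to real strict equivalence via Lemma \ref{lemm.strictrealpenc}. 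This is in fact the same template the authors use to prove Theorem \ref{main_th} from Theorem \ref{hs_th} (there the role of Lemma \ref{lemm.strictrealpenc} is played by the harder Theorem \ref{di_th}, since congruence rather than equivalence is involved). Your uniqueness step is also sound: the splitting map sending $(I_{2k},C_{2k}(a,b))$ with $b>0$ back to the conjugate pair of Jordan blocks, and fixing the remaining four types, is injective on multisets of summands, so uniqueness reduces to the uniqueness clause of Theorem \ref{thm.KCF}. The only thing your write-up buys beyond the paper is independence from the external reference; conversely, the citation route avoids having to restate the (easy but slightly fussy) bookkeeping you correctly carry out.
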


The direct sum asserted in Theorem \ref{thm.realKCF} is (up to permutation of its direct
summands) the real-KCF of $(A,B)$ and it will be denoted by real-KCF$(A,B)$.

\subsection{Horn-Sergeichuk canonical form for $^*$congruence}
Due to its relevance in this paper, we reproduce here the canonical form for $^*$\!congruence of a matrix $A \in \C^{n\times n}$ provided in \cite[Theorem 1.1 (b)]{hs06}. Moreover, we relate such canonical form with the KCF of $(A^*,A)$ under strict equivalence. We call the direct sum asserted in Theorem \ref{hs_th} (a) the {\em Horn-Sergeichuk canonical form of $A$} and it will be denoted as HSCF$(A)$, where it must be understood that HSCF$(A)$ is defined up to permutation of the direct summands. We emphasize that Theorem \ref{hs_th} (b) establishes that KCF$(A^*,A)$ determines HSCF$(A)$ up to the signs of a particular type of canonical blocks. This may be useful in determining HSCF$(A)$ because the class of strict equivalence transformations is larger than the class of $^*$\!congruence transformations.

\begin{theorem} \label{hs_th} \phantom{kk}
\begin{itemize}
    \item[\rm (a)] {\rm \cite[Th. 1.1 (b)]{hs06}}
    Each square complex matrix $A$ is $^*$\!congruent to a direct sum, uniquely determined up to permutation of summands, of canonical matrices of the following three types
    \begin{center}
    {\renewcommand{\arraystretch}{1.6}
     \renewcommand{\tabcolsep}{0.3cm}
    \begin{tabular}{|c|c|c|}\hline
    Name&Block&Conditions\\\hline\hline
      {\rm  Type 0} & $J_k(0)$ & --\\\hline
      {\rm  Type I} & $\mu\Gamma_k$& $\mu\in \C$, $|\mu|=1$\\\hline
      {\rm  Type II} &$H_{2k}(\mu)$& $\mu\in \C$, $|\mu|>1$\\\hline
    \end{tabular}}
    \end{center}

    \item[\rm (b)] The direct sum asserted in (a) determines the {\rm KCF}$(A^*,A)$ under strict equivalence uniquely up to permutation of its direct summands. Conversely, the {\rm KCF}$(A^*,A)$ under strict equivalence determines the direct sum asserted in (a) uniquely up to permutation of summands and multiplication of any direct summand of Type I by $-1$. For any direct summand $B$ of Types I, II, or III in part (a), the {\rm KCF}$(B^*,B)$ under strict equivalence is given in the following table:
     \begin{center}
    {\renewcommand{\arraystretch}{2.2}
     \renewcommand{\tabcolsep}{0.25cm}
    \begin{tabular}{|c|c|}\hline
         Block $B$ in {\rm HSCF} & {\rm KCF}$(B^*,B)$\\ \hline \hline
         $J_k(0)$ &
         \begin{tabular}{ll}
          $(F_\ell , G_\ell) \oplus (F_\ell^\top , G_\ell^\top)$   &  if $k=2\ell +1$ \\
          $(J_\ell (0), I_\ell ) \oplus (I_\ell \, J_\ell (0))$    &  if $k=2\ell$
         \end{tabular}
         \\ \hline
         $\mu\Gamma_k$, \; $\mu\in \C$, $|\mu|=1$ &  $\displaystyle  \left(I_k \, , \, J_k \left( (-1)^{k+1} \, \mu^2 \right) \, \right)$ \\[0.2cm] \hline
         $H_{2k}(\mu)$, \; $\mu\in \C$, $|\mu|>1$ & $\displaystyle (I_k \, , \, J_k (\mu)\,)  \oplus  (I_k \, , \, J_k \left( 1/\overline{\mu} \right) \, )$ \\[0.2cm] \hline
    \end{tabular}}
    \end{center}
\end{itemize}
\end{theorem}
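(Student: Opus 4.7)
The proof of part (a) is immediate by invoking \cite[Theorem 1.1 (b)]{hs06}. For (b), I would start from the observation that $^*$\!congruence of $A$ induces strict equivalence of the pair $(A^*,A)$: if $B = PAP^*$ with $P$ invertible, then $\lambda B^* + B = P(\lambda A^* + A) P^*$, so $(A^*,A) \approx (B^*,B)$. Consequently, if ${\rm HSCF}(A) = B_1 \oplus \cdots \oplus B_s$, then ${\rm KCF}(A^*,A) = {\rm KCF}(B_1^*, B_1) \oplus \cdots \oplus {\rm KCF}(B_s^*, B_s)$, which reduces the first implication to verifying the three table entries block by block.

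For Type I, $B = \mu\Gamma_k$ with $|\mu|=1$, the invertibility of $\Gamma_k$ gives $(B^*,B) \approx (I_k, B^{-*}B)$ via left-multiplication by $B^{-*}$, and, since $\Gamma_k$ is real and $|\mu|=1$, one has $B^{-*}B = \mu^2 \Gamma_k^{-\top} \Gamma_k$. By Lemma \ref{lemm.problocksHS}(1) this is similar to $\mu^2 J_k((-1)^{k+1})$, which a diagonal similarity with entries $1,\mu^{-2},\mu^{-4},\ldots$ turns into $J_k((-1)^{k+1}\mu^2)$. For Type II, $B = H_{2k}(\mu)$ with $|\mu|>1$, $B$ is invertible and Lemma \ref{lemm.problocksHS}(2) gives the stated splitting by exactly the same cosquare reduction. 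The delicate case is Type 0, $B = J_k(0)$, which is singular and admits no cosquare shortcut. Here I would compute the ${\rm KCF}$ of $\lambda J_k(0)^\top + J_k(0)$ directly: for $k=2\ell$, $\det(\lambda J_k(0)^\top + J_k(0)) = \pm\lambda^\ell$ pins eigenvalues at $0$ and $\infty$, and a rank analysis at $\lambda=0$ and of the leading coefficient $J_k(0)^\top$ shows one Jordan block of size $\ell$ at each, yielding $(J_\ell(0), I_\ell) \oplus (I_\ell, J_\ell(0))$; for $k=2\ell+1$, the pencil is identically singular (as can be seen by expanding the determinant along a zero row/column inductively), and an explicit strict equivalence exploiting its bidiagonal structure (permutations plus row reductions) brings it into $(F_\ell, G_\ell) \oplus (F_\ell^\top, G_\ell^\top)$.

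For the converse, I would extract the HSCF summands from the KCF blockwise. The singular KCF blocks and the matched $0/\infty$ Jordan pairs of equal size can only arise from Type 0 summands, with the sizes uniquely determining each contributing $J_k(0)$; regular KCF blocks $(I_k, J_k(\mu))$ with $|\mu|\neq 1$ occur only in pairs with $(I_k, J_k(1/\overline\mu))$ of equal size and correspond to unique Type II summands $H_{2k}(\mu)$ once the normalization $|\mu|>1$ is enforced; regular KCF blocks $(I_k, J_k(\nu))$ with $|\nu|=1$ come from a Type I summand $\mu\Gamma_k$ with $\mu^2=(-1)^{k+1}\nu$, and the square-root ambiguity in $\mu$ is precisely the claimed sign indeterminacy. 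Combined with the uniqueness of HSCF in (a), this yields (b). The principal technical obstacle is the Type 0 computation, since the cosquare trick fails for singular blocks and the strict equivalence has to be produced explicitly; the rest of the argument is bookkeeping based on Lemma \ref{lemm.problocksHS} and the trivial invertibility of Type I and Type II blocks.
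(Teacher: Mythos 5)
Your proposal is correct and follows essentially the same route as the paper: the observation that $^*$\!congruence of $A$ induces strict equivalence of $(A^*,A)$ reduces part (b) to the three table entries, and Types I and II are handled by the same cosquare reduction via Lemma \ref{lemm.problocksHS}. The only divergence is the Type 0 entry, where the paper simply cites \cite[Theorem 1.2 (b)]{fhs08} with $\mu=0$ (noting that a direct pencil computation is also possible), whereas you sketch that direct computation --- your determinant/rank analysis for $k$ even and the minimal-index argument for $k$ odd are precisely the ``standard arguments of matrix pencils'' the paper alludes to.
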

\noindent{\em Proof of }(b). Observe that if $A = P (B_1 \oplus \cdots \oplus  B_q) P^*$ is the $^*$\!congruence asserted in part (a), where each $B_i$ is a canonical matrix of Type 0, I, or II, then $(A^*,A) = P \left( (B_1^*,B_1) \oplus \cdots \oplus  (B_q^*,B_q)  \right) P^*$. So, $(A^*,A) \approx (B_1^*,B_1) \oplus \cdots \oplus  (B_q^*,B_q)$ and ${\rm KCF}(A^*,A) = {\rm KCF}(B_1^*,B_1) \oplus \cdots \oplus  {\rm KCF}(B_q^*,B_q)$. Thus, HSCF$(A)$ and the table in part (b) determine completely KCF$(A^*,A)$. On the other hand, from the table in part (b), it is obvious that KCF$(A^*,A)$ determines HSCF$(A)$ only up to multiplication of any direct summand of Type I by $-1$. Therefore, it only remains to prove the table in part (b).

The KCF of $(J_k (0)^* , J_k(0))$ follows from \cite[Theorem 1.2 (b)]{fhs08} by taking $\lambda =0$ in that reference. It can be also easily deduced via standard arguments of matrix pencils. For KCF$((\mu \Gamma_k)^* , \mu \Gamma_k)$ with $|\mu|=1$, we proceed as follows
\begin{align*}
((\mu \Gamma_k)^* , \mu \Gamma_k) ) & \approx (I_k , (\mu \Gamma_k)^{-*} \mu \Gamma_k) ) = \left(I_k , \frac{\mu}{\overline \mu} \, \Gamma_k^{-\top} \Gamma_k) \right) \approx \left(I_k , J_k \left( (-1)^{k+1} \frac{\mu}{\overline \mu}  \right) \right) \\ & =  \left(I_k , J_k \left( (-1)^{k+1} \mu^2  \right) \right),
\end{align*}
where the last strict equivalence follows from Lemma \ref{lemm.problocksHS} and the last equality from the fact that $\mu^2 = \mu / \overline \mu$ if $|\mu| =1$.  Analogously, for KCF$( H_{2k} (\mu)^* , H_{2k} (\mu) )$, observe that
$$
( H_{2k} (\mu)^* , H_{2k} (\mu) ) \approx (I_{2k} ,  H_{2k} (\mu)^{-*} H_{2k} (\mu) ) \approx (I_k \, , \, J_k (\mu)\, ) \oplus  (I_k \, , \, J_k \left( 1/\overline{\mu} \right) \, ) ,
$$
where the last strict equivalence follows from Lemma \ref{lemm.problocksHS}.
\qed

\begin{remark}\label{mu_rem} We emphasize that the condition $|\mu|>1$ in the Type II blocks in Theorem {\rm\ref{hs_th}} (a) can be replaced by $0 <|\mu|<1$. To see this note that $\left[\begin{smallmatrix}
    0&J_k(\mu)^{-1}\\I&0
\end{smallmatrix}\right]\left[\begin{smallmatrix}
    0&I\\J_k(\mu)&0
\end{smallmatrix}\right]\left[\begin{smallmatrix}
  0&J_k(\mu)^{-1}\\I&0
\end{smallmatrix}\right]^*=\left[\begin{smallmatrix}
    0&I\\J_k(\mu)^{-*}&0
\end{smallmatrix}\right]$ and, since $J_k(\mu)^{-*}$ is similar to $J_k(1/\overline\mu)$, we can use Lemma {\rm\ref{similar_lemma}} to replace $\mu$ by $1/\overline\mu$.
\end{remark}

\subsection{Futorny-Horn-Sergeichuk tridiagonal canonical form for $^*$congruence}
Another interesting canonical form for $^*$\!congruence of a matrix $A \in \C^{n\times n}$ is the tridiagonal one introduced in \cite[Theorem 1.2]{fhs08}, which requires fewer canonical blocks than the HSCF described in Theorem \ref{hs_th}. We reproduce here \cite[Theorem 1.2]{fhs08}, since we will also develop a real counterpart of this canonical form. We remark that \cite[Theorem 1.2]{fhs08} is valid for any algebraically closed field with a nonidentity involution, but, taking into account the purposes of this work, we will state it over $\C$. The direct sum asserted in Theorem \ref{thm.fhs} (a) will be called the {\em Futorny-Horn-Sergeichuk canonical form of $A$} and will be denoted by FHSCF$(A)$, which is defined up to permutation of the direct summands. As in the case of Theorem \ref{hs_th}, the FHSCF$(A)$ will be related to KCF$(A^*,A)$, although in this case such a relationship was already established in \cite[Theorem 1.2]{fhs08}. Also in this case, KCF$(A^*,A)$ determines FHSCF$(A)$ up to the signs of a particular type of canonical blocks. It is natural to wonder about the precise relationship between the HSCF and the FHSCF. We postpone the answer to this question to Section \ref{sec.relHS-FHS}, as it requires careful analysis. We warn the reader that we have used $\mu^2 = \overline{\mu}^{-1} \mu$ if $|\mu| = 1$, and $(J_k (\overline \alpha) , I_k) \approx (I_k, J_k (1/\overline \alpha))$ if $\alpha \ne 0$ in Theorem \ref{thm.fhs} (b).

\begin{theorem} {\rm \cite[Theorem 1.2]{fhs08}} \label{thm.fhs}
\phantom{kk}
\begin{itemize}
    \item[\rm (a)]
    Each square complex matrix $A$ is $^*$\!congruent to a direct sum, uniquely determined up to permutation of summands, of tridiagonal canonical matrices of the following two types
    \begin{center}
    {\renewcommand{\arraystretch}{1.6}
     \renewcommand{\tabcolsep}{0.3cm}
    \begin{tabular}{|c|l|l|}\hline Name&Block&Conditions\\\hline\hline
         {\rm Type Tri-I} & $T_k(\mu)$ &
         \begin{tabular}{l}
         $\mu \in \C$, $|\mu| \ne 1$, \\[-0.2cm]
         each nonzero $\mu$ is determined \\[-0.2cm]
         up to replacement by $\overline{\mu}^{-1}$,\\[-0.2cm]
         $\mu = 0$ if $k$ is odd
         \end{tabular}
         \\[0.2cm] \hline
        {\rm  Type Tri-II} & $\mu\widetilde{\Gamma}_k$ & \begin{tabular}{l} $\mu \in \C$, $|\mu|=1$ \end{tabular}
         \\[0.2cm] \hline
    \end{tabular}}
    \end{center}
    \item[\rm (b)] The direct sum asserted in (a) determines the {\rm KCF}$(A^*,A)$ under strict equivalence uniquely up to permutation of its direct summands. Conversely, the {\rm KCF}$(A^*,A)$ under strict equivalence determines the direct sum asserted in (a) uniquely up to permutation of summands and multiplication of any direct summand of Type Tri-II by $-1$. For any direct summand $B$ of Types Tri-I or Tri-II in part (a), the {\rm KCF}$(B^*,B)$ under strict equivalence is given in the following table:
     \begin{center}
    {\renewcommand{\arraystretch}{2.2}
     \renewcommand{\tabcolsep}{0.25cm}
    \begin{tabular}{|c|c|}\hline
          Block $B$ in {\rm FHSCF} & {\rm KCF}$(B^*,B)$\\ \hline \hline
         $T_k(\mu)$, \; \begin{tabular}{l} $\mu\in \C$, $|\mu| \ne 1$, \\[-0.2cm]
         $\mu = 0$ if $k$ is odd
         \end{tabular}
         &
         \begin{tabular}{ll}
          $(F_\ell , G_\ell) \oplus (F_\ell^\top , G_\ell^\top)$   &  if $k=2\ell +1$ \\
          $(J_\ell (0), I_\ell ) \oplus (I_\ell , J_\ell (0))$    &  if $k=2\ell$ and $\mu =0$ \\
          $\displaystyle (I_\ell \, , \, J_\ell (\mu)\,)  \oplus  (I_\ell \, , \, J_\ell \left( 1/\overline{\mu} \right) \, )$  &  if $k=2\ell$ and $\mu \ne 0$ \\[0.2cm]
         \end{tabular}
         \\ \hline
         $\mu \widetilde{\Gamma}_k$, \; $\mu\in \C$, $|\mu|=1$ &  $\displaystyle  \left(I_k \, , \, J_k \left( (-1)^{k+1} \, \mu^2 \right) \, \right)$ \\[0.2cm] \hline
    \end{tabular}}
    \end{center}
\end{itemize}
\end{theorem}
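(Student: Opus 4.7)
The plan is to mimic the proof of Theorem \ref{hs_th}(b), since part (a) is a verbatim restatement of \cite[Theorem 1.2]{fhs08} and requires no new argument, and what needs justification in (b) is the table together with the fact that it sets up a bijection (up to the indicated sign ambiguity) between the FHSCF blocks and the KCF blocks of $(A^*,A)$.

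The direct implication (that FHSCF$(A)$ determines KCF$(A^*,A)$) follows from block additivity: if $A = P(B_1\oplus\cdots\oplus B_q)P^*$ is the $^*$\!congruence asserted in (a), then $(A^*,A) = P\,((B_1^*,B_1)\oplus\cdots\oplus(B_q^*,B_q))\,P^*$, so $(A^*,A)\approx \bigoplus_i (B_i^*,B_i)$ and hence ${\rm KCF}(A^*,A) = \bigoplus_i {\rm KCF}(B_i^*,B_i)$. Combined with the table this gives ${\rm KCF}(A^*,A)$ from FHSCF$(A)$.

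For the converse I would read off the table and note that the four rows produce mutually disjoint KCF patterns: singular pairs arise only from $T_{2\ell+1}(0)$; matched pairs of Jordan blocks at $0$ and at $\infty$ arise only from $T_{2\ell}(0)$; matched pairs $(I_\ell,J_\ell(\mu))\oplus(I_\ell,J_\ell(1/\overline\mu))$ with $|\mu|\neq 1$ arise only from Tri-I blocks with $\mu\neq 0$; and isolated regular blocks $(I_k,J_k(\nu))$ with $\nu$ on the unit circle arise only from Tri-II blocks. The condition $|\mu|\neq 1$ in Tri-I forces $\mu\neq 1/\overline\mu$, so those blocks are recovered exactly up to the $\mu\leftrightarrow \overline\mu^{-1}$ indeterminacy built into the statement. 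For Tri-II, the map $\mu\mapsto (-1)^{k+1}\mu^2$ on the unit circle is two-to-one with fiber $\{\mu,-\mu\}$, which is precisely the stated $\pm1$ sign ambiguity.

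The last task is to verify the rows of the table. The first two Tri-I rows follow immediately from Theorem \ref{hs_th}(b) because $T_k(0)=J_k(0)$ (both have $1$'s on the superdiagonal and zeros elsewhere). For $T_{2\ell}(\mu)$ with $\mu\neq 0$, the matrix is invertible, so $(T_{2\ell}(\mu)^*,T_{2\ell}(\mu)) \approx (I_{2\ell},\,T_{2\ell}(\mu)^{-*}T_{2\ell}(\mu))$, and the claim reduces to showing that this $^*$\!cosquare is similar to $J_\ell(\mu)\oplus J_\ell(1/\overline\mu)$; I would either quote this from \cite[Theorem 1.2(b)]{fhs08} or obtain it by a direct computation, reordering the rows and columns of $T_{2\ell}(\mu)$ by the odd-even permutation to expose the block structure of $H_{2\ell}(\mu)$ (up to a diagonal scaling) and then invoking Lemma \ref{lemm.problocksHS}(2). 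For Tri-II the argument mirrors the $\mu\Gamma_k$ computation in the proof of Theorem \ref{hs_th}(b):
\[
((\mu\widetilde\Gamma_k)^*,\mu\widetilde\Gamma_k)\approx \left(I_k,\tfrac{\mu}{\overline\mu}\,\widetilde\Gamma_k^{-\top}\widetilde\Gamma_k\right)\approx \left(I_k,\,J_k((-1)^{k+1}\mu^2)\right),
\]
provided one knows the tridiagonal analogue of Lemma \ref{lemm.problocksHS}(1), namely that $\widetilde\Gamma_k^{-\top}\widetilde\Gamma_k$ is similar to $J_k((-1)^{k+1})$. I expect this last similarity to be the main technical obstacle; I would attack it either by a direct inductive computation of the cosquare of the real tridiagonal matrix in \eqref{eq.blocktri2}, or simply cite it from \cite[Theorem 1.2(b)]{fhs08}, where it is implicit in the parallel statement for the Tri-II blocks.
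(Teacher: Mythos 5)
You should first note that the paper offers no proof of Theorem \ref{thm.fhs} at all: both parts are reproduced verbatim from \cite[Theorem 1.2]{fhs08}, and the surrounding text states explicitly that the relationship with ${\rm KCF}(A^*,A)$ ``was already established'' there (the only editorial changes being the substitutions $\mu^2=\overline{\mu}^{-1}\mu$ and $(J_k(\overline\alpha),I_k)\approx(I_k,J_k(1/\overline\alpha))$ mentioned just before the statement). So your reconstruction is not competing with an argument in the paper but supplying one, and it is sound: the block-additivity step and the disjointness analysis of the four KCF patterns (including the two-to-one fiber $\{\mu,-\mu\}$ of $\mu\mapsto(-1)^{k+1}\mu^2$ on the unit circle) mirror the paper's proof of Theorem \ref{hs_th}(b) and correctly account for the stated sign ambiguity, while the identity $T_k(0)=J_k(0)$ disposes of the first two rows of the table. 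Two caveats. First, the similarity $\widetilde\Gamma_k^{-\top}\widetilde\Gamma_k\sim J_k((-1)^{k+1})$, which you rightly identify as the technical core, must not be borrowed from the paper's proof of Theorem \ref{thm.2strealCFandKCF}, where it is itself deduced \emph{from} Theorem \ref{thm.fhs}(b); a non-circular route using only material in the paper is Lemma \ref{equivalencegammas.th}, which gives $\widetilde\Gamma_k=\pm(P_kS_k)^\top\Gamma_k(P_kS_k)$ and hence (the signs cancelling in the cosquare) that $\widetilde\Gamma_k^{-\top}\widetilde\Gamma_k$ is similar to $\Gamma_k^{-\top}\Gamma_k$, which is similar to $J_k((-1)^{k+1})$ by Lemma \ref{lemm.problocksHS}. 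Second, the odd--even permutation of $T_{2\ell}(\mu)$ does not produce $H_{2\ell}(\mu)$ up to a diagonal scaling: it produces $\left[\begin{smallmatrix}0 & I_\ell+\mu N^\top\\ J_\ell(\mu) & 0\end{smallmatrix}\right]$, with $N$ the nilpotent upper shift, whose $^*$cosquare is $(I_\ell+\overline\mu N)^{-1}J_\ell(\mu)\,\oplus\, (\overline\mu I_\ell+N^\top)^{-1}(I_\ell+\mu N^\top)$; a one-line rank computation using $|\mu|\ne1$ shows the two summands are nonderogatory with single eigenvalues $\mu$ and $1/\overline\mu$, so the intended conclusion survives, but the intermediate claim as you phrased it needs this correction.
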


\section{Relation between Horn-Sergeichuk and Futorny-Horn- Sergeichuk canonical forms} \label{sec.relHS-FHS}

We present in Theorem \ref{thm.relation1} the precise $^*$congruence relations between the canonical blocks in the HSCF of Theorem \ref{hs_th} (a) and those in the FHSCF of Theorem \ref{thm.fhs} (a). For that purpose, we need the auxiliary Lemma \ref{equivalencegammas.th}, which shows that $\Gamma_k$ is real-congruent to either $\widetilde\Gamma_k$, when $k\equiv1,2$ (mod $4$), or $-\widetilde\Gamma_k$, when $k\equiv0,3$ (mod $4$). Moreover, it also provides the explicit real-congruence between these two matrices, which is the product of a permutation matrix times a diagonal signature matrix and, so, is orthogonal. The proof of Lemma \ref{equivalencegammas.th} is postponed to Appendix \ref{append.proof-lemma}.

\begin{lemma}\label{equivalencegammas.th}
    For any $k\geq1$, let $P_k \in \R^{k\times k}$ be the permutation matrix which corresponds to permuting the columns $\begin{bmatrix} 1, &\ldots,&k\end{bmatrix}$ of $I_k$ as follows
    \begin{eqnarray}
    \begin{bmatrix}
        \frac{k}{2}+1,&\frac{k}{2},&\frac{k}{2}+2,&\frac{k}{2}-1,&\hdots,&k-1,&2,&k,&1
    \end{bmatrix}  &\mbox{if $k$ is even, and}\\
    \begin{bmatrix}
        \frac{k+1}{2},&\frac{k+3}{2},&\frac{k-1}{2},&\frac{k+5}{2},&\frac{k-3}{2},&\hdots,&k-1,&2,&k,&1
    \end{bmatrix}&\mbox{if $k$ is odd}.
    \end{eqnarray}
    Let also $S_k:=\diag(s_1,\hdots,s_k)$, where
    $$
    s_i=\left\{\begin{array}{cc}
         -1&\mbox{if $i\equiv3$ {\rm(mod 4)}},  \\
         1&\mbox{otherwise.}
    \end{array}\right.
    $$
    Then
    \begin{equation*}\label{congruencegamma}
        (P_kS_k)^\top\Gamma_k(P_kS_k)=\left\{\begin{array}{cc}
             \widetilde\Gamma_k&\mbox{if $k\equiv1,2$ {\rm(mod 4)}},  \\
             -\widetilde\Gamma_k&\mbox{if $k\equiv0,3$ {\rm (mod 4)}.}
        \end{array}\right.
    \end{equation*}
\end{lemma}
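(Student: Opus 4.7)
The plan is to verify the identity by direct entrywise computation, since the statement is an explicit matrix identity rather than a structural assertion. First I would record precise formulas for the nonzero entries of the two matrices involved. The matrix $\Gamma_k$ is supported on its main antidiagonal (entries with $j = k-i+1$) and its sub-antidiagonal (entries with $j = k-i+2$), and at each such position the value is $(-1)^{k-i}$; meanwhile $\widetilde\Gamma_k$ is tridiagonal with $(\widetilde\Gamma_k)_{1,1} = 1$, $(\widetilde\Gamma_k)_{i,i+1} = 1$, and $(\widetilde\Gamma_k)_{i+1,i} = (-1)^i$ for $i\ge 1$, all other entries being zero.

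Writing $\sigma$ for the permutation encoded by $P_k$, so that $(P_k^\top \Gamma_k P_k)_{ij} = (\Gamma_k)_{\sigma(i),\sigma(j)}$, the description above implies that $(P_k^\top \Gamma_k P_k)_{ij}$ is nonzero precisely when $\sigma(i)+\sigma(j)\in\{k+1,k+2\}$. The first step is to prove, from the closed forms $\sigma(2m-1)=k/2+m$, $\sigma(2m)=k/2-m+1$ for $k$ even (and their odd-$k$ analogues $\sigma(2m)=(k+1)/2+m$, $\sigma(2m+1)=(k+1)/2-m$), that consecutive values $\sigma(i)+\sigma(i+1)$ always lie in $\{k+1,k+2\}$ and alternate between the two, that non-consecutive pairs $\sigma(i)+\sigma(j)$ with $|i-j|\ge 2$ never hit this set, and that $2\sigma(i)\in\{k+1,k+2\}$ only at $i=1$. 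Together these facts show that $P_k^\top \Gamma_k P_k$ is tridiagonal with a single diagonal contribution at $(1,1)$, matching the support of $\widetilde\Gamma_k$.

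Next I would plug the closed forms for $\sigma$ into $(\Gamma_k)_{i',j'} = (-1)^{k-i'}$ to read off four parametric families of off-diagonal entries of $P_k^\top \Gamma_k P_k$, indexed by the parity of the row index and whether the position is super- or sub-diagonal, together with the value $(-1)^{k/2-1}$ at $(1,1)$ for $k$ even and $(-1)^{(k-1)/2}$ for $k$ odd. Comparing these with $\epsilon\,\widetilde\Gamma_k$, where $\epsilon=+1$ for $k\equiv1,2\pmod 4$ and $\epsilon=-1$ for $k\equiv0,3\pmod 4$, the $(1,1)$ entry already agrees, and the corrections needed on the off-diagonals reduce to the two recurrences $s_{2m-1}s_{2m}=(-1)^{m+1}$ and $s_{2m}s_{2m+1}=(-1)^m$. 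Normalising $s_1=1$, these recurrences force $s_i=-1$ exactly when $i\equiv 3\pmod 4$, which is precisely the definition of $S_k$.

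The only real obstacle is bookkeeping: the argument branches on the parity of $k$ (which changes the formula for $\sigma$), the parity of the row index (which selects which antidiagonal of $\Gamma_k$ is hit), and the residue of $k\bmod 4$ (which selects the overall sign $\epsilon$). I would organise the write-up as a short combinatorial lemma establishing the tridiagonal structure of $P_k^\top \Gamma_k P_k$, followed by a small table listing the eight cases for its entries; once this table is in place, both the support check and the sign check become mechanical.
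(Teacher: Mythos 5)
Your proposal is correct and follows essentially the same route as the paper's proof in Appendix A: both verify the identity entrywise by tracking the action of the permutation $\sigma$ on the two antidiagonals supporting $\Gamma_k$, first establishing that $P_k^\top\Gamma_k P_k$ has the tridiagonal-plus-$(1,1)$ support of $\widetilde\Gamma_k$ and then checking signs after conjugating by $S_k$. Your packaging of the sign check into the two recurrences $s_{2m-1}s_{2m}=(-1)^{m+1}$ and $s_{2m}s_{2m+1}=(-1)^{m}$ is a cleaner bookkeeping device than the paper's four-way case split on $k \bmod 4$, and your closed forms for $\sigma$ and for the entries $(-1)^{k-i}$ of $\Gamma_k$ check out, so the argument goes through.
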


Theorem \ref{thm.relation1} is a direct corollary of Lemma \ref{equivalencegammas.th} and part (b) in Theorems \ref{hs_th} and \ref{thm.fhs}, so the proof is omitted.

\begin{theorem} \label{thm.relation1} Let $J_k(0)$, $\Gamma_k$, $H_{2k} (\mu)$, $T_k(\mu)$, and $\widetilde \Gamma_k$ be the matrices in \eqref{jordanblock}, \eqref{gamma}, \eqref{h2k}, \eqref{eq.blocktri1}, and \eqref{eq.blocktri2}, respectively. Then the $^*$\!congruences described in the following table hold.
 \begin{center}
    {\renewcommand{\arraystretch}{2.2}
     \renewcommand{\tabcolsep}{0.25cm}
    \begin{tabular}{|c|c|}\hline
          Block $B$ in {\rm HSCF} & is $^*$\!congruent to block $C$ in {\rm FHSCF}\\ \hline \hline
         $J_k(0)$ & $T_k (0)$
         \\ \hline
         $\mu\Gamma_k$, \; $\mu\in \C$, $|\mu|=1$ &  $\begin{array}{cc}
             \mu \widetilde\Gamma_k&\mbox{if $k\equiv1,2$ {\rm(mod 4)}}  \\[-0.15cm]
             -\mu \widetilde\Gamma_k&\mbox{if $k\equiv0,3$ {\rm (mod 4)}}
        \end{array}$ \\[0.2cm] \hline
         $H_{2k}(\mu)$, \; $\mu\in \C$, $|\mu|>1$ & $T_{2k} (\mu)$ \\[0.2cm] \hline
    \end{tabular}}
    \end{center}
\end{theorem}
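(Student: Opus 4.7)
The plan is to verify each of the three rows of the table separately by matching the Kronecker canonical forms of the two $^*$cosquare pencils and then invoking the uniqueness clauses of Theorems \ref{hs_th}(b) and \ref{thm.fhs}(b). The Type~0 and Type~II rows will follow purely from such a KCF comparison, while the Type~I row will need the additional input of Lemma \ref{equivalencegammas.th} to fix a sign ambiguity.

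For the Type~0 block, I would read off from Theorem \ref{hs_th}(b) the pencil ${\rm KCF}(J_k(0)^*,J_k(0))$ and compare it, split according to the parity of $k$, with the pencil given by Theorem \ref{thm.fhs}(b) for $T_k(0)$ (which is a Type~Tri-I block because the constraint ``$\mu=0$ if $k$ is odd'' is satisfied). The two pencils coincide. Since Theorem \ref{thm.fhs}(a)(b) states that a FHSCF consisting of a single Type~Tri-I block is determined by its KCF without any sign ambiguity, $J_k(0)$ must be $^*$congruent to $T_k(0)$. The Type~II row is handled the same way: for $|\mu|>1$, Theorem \ref{hs_th}(b) gives ${\rm KCF}(H_{2k}(\mu)^*,H_{2k}(\mu)) = (I_k,J_k(\mu))\oplus(I_k,J_k(1/\overline\mu))$, and applying Theorem \ref{thm.fhs}(b) to $T_{2k}(\mu)$ (with $\ell=k$, $\mu\ne 0$, size $2\ell$ even) yields the same pencil; again, this is a single Type~Tri-I block, so the KCF match forces the $^*$congruence.

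For the Type~I row, the analogous KCF match (both $\mu\Gamma_k$ and $\mu\widetilde\Gamma_k$ give KCF $(I_k,J_k((-1)^{k+1}\mu^2))$) only determines the corresponding FHSCF up to the sign of the Type~Tri-II summand, so the KCF comparison alone cannot distinguish $\mu\widetilde\Gamma_k$ from $-\mu\widetilde\Gamma_k$. The sign has to be resolved by producing an explicit transformation, which is exactly what Lemma \ref{equivalencegammas.th} provides: the real orthogonal matrix $P_kS_k$ satisfies $(P_kS_k)^\top\Gamma_k(P_kS_k)=\pm\widetilde\Gamma_k$, with the sign determined by $k\bmod 4$. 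Since $P_kS_k$ is real we have $(P_kS_k)^\top=(P_kS_k)^*$, so this real-congruence is also a $^*$congruence; multiplying through by the scalar $\mu$ transforms it into $(P_kS_k)^\top(\mu\Gamma_k)(P_kS_k)^*=\pm\mu\widetilde\Gamma_k$, which is the asserted $^*$congruence with the correct sign depending on $k\bmod 4$.

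The only nontrivial step is the Type~I row, and the work there has already been isolated and carried out in Lemma \ref{equivalencegammas.th} (whose proof is deferred to the appendix). Everything else is just table lookup together with the uniqueness-up-to-sign statements in Theorems \ref{hs_th}(b) and \ref{thm.fhs}(b), justifying why the authors declare the proof to be a direct corollary and omit it.
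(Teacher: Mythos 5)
Your proof is correct and follows exactly the route the paper intends: the paper omits the proof precisely because Theorem \ref{thm.relation1} is declared a direct corollary of Lemma \ref{equivalencegammas.th} together with part (b) of Theorems \ref{hs_th} and \ref{thm.fhs}, which is the combination you spell out. The only quibble is notational: in the Type~I row the $^*$congruence should read $(P_kS_k)^\top(\mu\Gamma_k)\overline{(P_kS_k)}=(P_kS_k)^\top(\mu\Gamma_k)(P_kS_k)$ rather than having $(P_kS_k)^*$ as the right factor, but since $P_kS_k$ is real this does not affect the argument.
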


\section{First canonical form of real matrices for real-congruence} \label{sec.firstcan}

Theorem \ref{main_th} establishes a real counterpart of the HSCF presented in Theorem \ref{hs_th} (a). It is one of the two main results of this paper. Theorem \ref{main_th} will be complemented in Theorem \ref{thm.1strealCFandKCF} with the relation between the first canonical form of $A\in \R^{n\times n}$ for real-congruence and the real-KCF of $(A^\top,A)$.

\begin{theorem}\label{main_th}
    Each square matrix $A\in\R^{n\times n}$ is real-congruent to a direct sum, uniquely determined up to permutation of summands, of canonical matrices of the following four types
 \begin{center}
    {\renewcommand{\arraystretch}{1.8}
     \renewcommand{\tabcolsep}{0.3cm}
    \begin{tabular}{|c|l|l|}\hline Name&Block&\phantom{aaaaaaaa}Conditions\\\hline\hline
         {\rm Type (i)} & $J_k(0)$ &\phantom{aaaaaaaaaaa}-- \\\hline
         {\rm Type (ii)} & $\Gamma_k\otimes N$& \begin{tabular}{l} $N = \pm1$ or \\[-0.2cm]
         $N = C(a,b)$, with $a,b\in\R$, $a^2+b^2=1$, and $b>0$ \end{tabular} \\\hline
         {\rm Type (iii)} & $H_{2k}(a)$& \hspace{0.3cm} $a\in\R$, $0<|a|<1$   \\\hline
         {\rm Type (iv)} & $\widehat H_{4k}(a,b)$&\hspace{0.05cm} $a,b\in\R$, $a^2+b^2<1$, and $b>0$  \\\hline
    \end{tabular}}
    \end{center}
In the matrices of {\rm Type (ii)}, $\Gamma_k\otimes N$ can be replaced by $N \otimes \Gamma_k$. In the matrices of Type {\rm  (iii)}, the condition $0<|a|<1$ can be replaced by $|a| >1$ or, in other words, each $a$ is determined up to replacement by $1/a$. In the matrices of Type {\rm  (iv)}, the condition $a^2 + b^2 <1$ can be replaced by $a^2 + b^2 >1$ or, in other words, each pair $(a,b)$ is determined up to replacement by $\left( a/(a^2 + b^2) \, , \,  b/(a^2 + b^2) \right)$.
\end{theorem}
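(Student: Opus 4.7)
The plan is to derive Theorem \ref{main_th} from the Horn-Sergeichuk canonical form (Theorem \ref{hs_th}(a)) combined with Theorem \ref{di_th}. For existence, I apply Theorem \ref{hs_th}(a) to obtain a $^*$congruence $A = P\Sigma P^*$ where $\Sigma$ is a direct sum of HSCF blocks (Types 0, I, II). Since $A$ is real, conjugating gives $A = \overline{P}\,\overline{\Sigma}\,\overline{P}^*$, so $A$ is also $^*$congruent to $\overline{\Sigma}$, and the uniqueness in Theorem \ref{hs_th}(a) forces $\Sigma$ and $\overline{\Sigma}$ to have identical multisets of blocks. Because $\overline{J_k(0)} = J_k(0)$, $\overline{\mu\Gamma_k}=\overline{\mu}\,\Gamma_k$, and $\overline{H_{2k}(\mu)}=H_{2k}(\overline{\mu})$, every HSCF block of $\Sigma$ with non-real parameter is paired with its complex conjugate, while the self-conjugate blocks are exactly $J_k(0)$, $\pm\Gamma_k$, and $H_{2k}(a)$ with $a\in\R$; these are already of Types (i), (ii) with $N=\pm 1$, and (iii) respectively.

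For each non-self-conjugate conjugate pair I must produce a $^*$congruence to a real block of Type (ii) or Type (iv). For a pair $\mu\Gamma_k\oplus\overline{\mu}\Gamma_k$ with $\mu=a+\ii b$, $|\mu|=1$ and $b>0$, I would write the direct sum as $D(\mu)\otimes\Gamma_k$ with $D(\mu)=\diag(\mu,\overline{\mu})$, apply Lemma \ref{mu_lemma} tensored with $I_k$ to get $(W\otimes I_k)(D(\mu)\otimes\Gamma_k)(W\otimes I_k)^*=C(a,b)\otimes\Gamma_k$, and then commute the Kronecker factors via Lemma \ref{lemm.commukron} to arrive at $\Gamma_k\otimes C(a,b)$, a Type (ii) block. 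For a pair $H_{2k}(\mu)\oplus H_{2k}(\overline{\mu})$ with $|\mu|>1$ and $b>0$, I would first use a block permutation interleaving the row and column blocks of size $k$ to rewrite the direct sum as $\left[\begin{smallmatrix} 0 & I_{2k} \\ J_k(\mu)\oplus J_k(\overline{\mu}) & 0 \end{smallmatrix}\right]$, invoke Lemma \ref{cab_lemma} to obtain a unitary similarity from $J_k(\mu)\oplus J_k(\overline{\mu})$ to $C_{2k}(a,b)$, and lift it to a $^*$congruence of the enclosing antidiagonal-structured matrix via Lemma \ref{similar_lemma}, landing on $\widehat H_{4k}(a,b)$. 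Splicing all these $^*$congruences yields a $^*$congruence from $A$ to a real direct sum of Type (i)--(iv) blocks, and Theorem \ref{di_th} upgrades it to a real-congruence, finishing existence.

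For uniqueness, the above constructions are reversible, so each Type (i)--(iv) block is $^*$congruent to a specific multiset of HSCF blocks: Type (i) to a single Type 0 block; Type (ii) with $N=\pm 1$ to a single Type I block with $\mu=\pm 1$; Type (ii) with $N=C(a,b)$ to a conjugate Type I pair with parameters $a\pm\ii b$; Type (iii) to a single Type II block with real parameter; and Type (iv) to a conjugate Type II pair. Any real-congruence between two direct sums of Type (i)--(iv) blocks is a fortiori a $^*$congruence, so Theorem \ref{hs_th}(a) fixes the associated HSCF multiset. The normalizations $b>0$ in Types (ii) and (iv), $0<|a|<1$ in Type (iii), and $a^2+b^2<1$ in Type (iv) single out a unique representative from each HSCF conjugate pair and from each $\mu\leftrightarrow 1/\overline{\mu}$ orbit, so the Type (i)--(iv) decomposition is determined uniquely up to permutation.

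The alternative forms at the end of Theorem \ref{main_th} are then immediate: $\Gamma_k\otimes N$ and $N\otimes\Gamma_k$ are real-congruent via the real permutation of Lemma \ref{lemm.commukron}; Remark \ref{mu_rem} combined with Theorem \ref{di_th} gives the real-congruence between $H_{2k}(a)$ and $H_{2k}(1/a)$ for Type (iii); and applying Remark \ref{mu_rem} to each member of a conjugate pair before reassembling shows that $\widehat H_{4k}(a,b)$ is real-congruent to $\widehat H_{4k}\bigl(a/(a^2+b^2),\, b/(a^2+b^2)\bigr)$. I expect the most delicate technical step to be the block-permutation argument that transforms $H_{2k}(\mu)\oplus H_{2k}(\overline{\mu})$ into the antidiagonal form with lower-left entry $J_k(\mu)\oplus J_k(\overline{\mu})$, since its correctness depends on carefully tracking the row/column blocks of size $k$ so that Lemmas \ref{cab_lemma} and \ref{similar_lemma} can be applied in the intended way.
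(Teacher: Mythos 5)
Your proposal is correct and follows essentially the same route as the paper: obtain the HSCF of $A$, use conjugation and HSCF uniqueness to pair non-real blocks with their conjugates, convert each conjugate pair $\mu\Gamma_k\oplus\overline{\mu}\Gamma_k$ and $H_{2k}(\mu)\oplus H_{2k}(\overline{\mu})$ into $\Gamma_k\otimes C(a,b)$ and $\widehat H_{4k}(a,b)$ via Lemmas \ref{mu_lemma}, \ref{cab_lemma} and \ref{lemm.commukron}, upgrade the resulting $^*$\!congruence to a real congruence by Theorem \ref{di_th}, and deduce uniqueness from the uniqueness of the HSCF. The only cosmetic differences are the order of the Kronecker manipulations in the $\Gamma_k$ step and your packaging of the final $H$-block computation through Lemma \ref{similar_lemma}, which, since the similarity there is unitary, reproduces exactly the congruence matrix $\left[\begin{smallmatrix} U & 0 \\ 0 & U \end{smallmatrix}\right]P$ used in the paper.
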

\begin{proof} According to Theorem \ref{hs_th} (a), there is a nonsingular matrix $P\in \C^{n\times n}$ such that $A=P \, C_A P^*$, where $C_A$ is a direct sum of canonical matrices of the types $J_k(0),\mu\Gamma_k$ with $|\mu|=1$, and $H_{2k}(\mu)$ with $|\mu|>1$ (which can be replaced by $0 <|\mu|<1$, see Remark \ref{mu_rem}), where $J_k(0),\Gamma_k$, and $H_{2k}(\mu)$ are as in \eqref{jordanblock}, \eqref{gamma}, and \eqref{h2k}. By taking conjugates in $A=P\, C_AP^*$ and using that $A$ is real, we get $A=\overline P\, \overline C_A P^\top$. This means that $\overline C_A$ is $^*$\!congruent to $A$. Since $\overline C_A$ is also a direct sum of canonical matrices $J_k(0),\widetilde\mu \, \Gamma_k$ with $|\widetilde\mu|=1$, and $H_{2k}(\widetilde\mu)$ with $|\widetilde\mu|>1$, the uniqueness (up to permutation of summands) of the direct sum in Theorem \ref{hs_th} (a) implies that, for $\mu\in\C\setminus\R$, if a block $\mu\Gamma_k$ appears in $C_A$, the block $\overline\mu\,\Gamma_k$ must appear as well. In other words, the blocks $\mu\Gamma_k,\overline\mu\,\Gamma_k$, with $\mu\in\C\setminus\R$ are paired up in $C_A$. The same happens with the blocks $H_{2k}(\mu)$ and $H_{2k}(\overline\mu)$, with $\mu\in\C\setminus\R$.

Now, we divide the proof in several steps. In Step 1 we see that each pair $\mu\Gamma_k \oplus \overline\mu\,\Gamma_k$, with $\mu\in\C\setminus\R$, is $^*$\!congruent to a real block of Type (ii) with $N = C(a,b)$ in the statement, whereas in Step 2 we do the same with the pairs of blocks $H_{2k}(\mu) \oplus H_{2k}(\overline\mu)$, with $\mu\in\C\setminus\R$, and blocks of Type (iv) in the statement. In Step 3 we show that then there is a real congruence leading $A$ to the direct sum of the blocks produced in Steps 1--2, together with the real blocks of Types (i), (ii) with $N= \pm 1$, and (iii) that were already in $C_A$. Finally, in Step 4 we prove that this final direct sum is unique up to permutation of its direct summands.

\medskip

{\bf Step 1}: Let $\mu=a+\ii b$, with $|\mu|=1$ (so $a^2+b^2=1$) and $b\neq0$. Then
$$
\begin{bmatrix}
    \mu\Gamma_k&0\\0&\overline\mu\,\Gamma_k
\end{bmatrix}=\begin{bmatrix}
    \mu&0\\0&\overline\mu
\end{bmatrix}\otimes\Gamma_k=\Pi\left(\Gamma_k\otimes\begin{bmatrix}
    \mu&0\\0&\overline\mu
\end{bmatrix}\right)\Pi^\top=\Pi(I_k\otimes W)^*\left(\Gamma_k\otimes\begin{bmatrix}
    a&b\\-b&a
\end{bmatrix}\right)(I_k\otimes W)\Pi^\top,
$$
where $\Pi$ is the permutation matrix in Lemma \ref{lemm.commukron}, and $W$ is as in Lemma \ref{mu_lemma}. As a consequence, $\left[\begin{smallmatrix}
    \mu\Gamma_k&0\\0&\overline\mu\,\Gamma_k
\end{smallmatrix}\right]$ is (unitarily) $^*$\!congruent to $\Gamma_k\otimes C(a,b)$.

If $b<0$ then $\left[\begin{smallmatrix}
    0&1\\1&0
\end{smallmatrix}\right]C(a,b)\left[\begin{smallmatrix}
    0&1\\1&0
\end{smallmatrix}\right]=C(a,-b)$, so $C(a,-b)$ is $^*$\!congruent to $C(a,b)$. Therefore, we can restrict ourselves to $b>0$.

Hence every block $\left[\begin{smallmatrix}
    \mu\Gamma_k&0\\0&\overline\mu\,\Gamma_k
\end{smallmatrix}\right]$ is $^*$\!congruent to a block of Type (ii) in the statement, with $N=C(a,b)$ and $b>0$. Lemma \ref{lemm.commukron} proves also that $\Gamma_k \otimes N$ can be replaced by $N \otimes \Gamma_k$ in any direct summand of Type (ii).

\medskip

{\bf Step 2}: Let $\mu=a+\ii b$ with $b>0$. Then, the block $\left[\begin{smallmatrix}
    H_{2k}(\mu)&0\\0&H_{2k}(\overline\mu)
\end{smallmatrix}\right]$ is $^*$\!congruent to $\widehat H_{4k}(a,b)$. To see this, let $U$ be the unitary matrix in Lemma \ref{cab_lemma} and let $P$ be the $4\times 4$ block permutation matrix of size $4k\times 4k$ which exchanges the second and third block rows (or columns). Then
\begin{equation*}\label{h2kperm}
\begin{array}{l}
\left(\begin{bmatrix}
    U&0\\0&U
\end{bmatrix}P\right)\begin{bmatrix}
     H_{2k}(\mu)&0\\0&H_{2k}(\overline\mu)
\end{bmatrix}\left( \begin{bmatrix}
    U&0\\0&U
\end{bmatrix}P\right)^* \\[0.5cm]  \phantom{aaa} =
\begin{bmatrix}
    U&0\\0&U
\end{bmatrix}\begin{bmatrix}
    I_k&0&0&0\\0&0&I_k&0\\0&I_k&0&0\\0&0&0&I_k
\end{bmatrix}\begin{bmatrix}
    0&I_k&0&0\\J_k(\mu)&0&0&0\\0&0&0&I_k\\0&0&J_k(\overline\mu)&0
\end{bmatrix}\begin{bmatrix}
    I_k&0&0&0\\0&0&I_k&0\\0&I_k&0&0\\0&0&0&I_k
\end{bmatrix}\begin{bmatrix}
    U&0\\0&U
\end{bmatrix}^*\\[0.9cm] \phantom{aaa} =\begin{bmatrix}
    U&0\\0&U
\end{bmatrix}\left[\begin{array}{cc|cc}
    0&0&I_k&0\\0&0&0&I_k\\\hline J_k(\mu)&0&0&0\\0&J_k(\overline\mu)&0&0
\end{array}\right]\begin{bmatrix}
    U&0\\0&U
\end{bmatrix}^*=
    \begin{bmatrix}
        0&I_{2k}\\C_{2k}(a,b)&0
    \end{bmatrix}=\widehat H_{4k}(a,b),
\end{array}
\end{equation*}
where, to get the last-but-one identity, we use that $U$ is unitary (namely, $UU^*=I$).

Moreover, note that, if $b<0$, we can consider $\widehat H_{4k}(a,-b)$, since $\widehat H_{4k}(a,b)$ is real-congruent to $\widehat H_{4k}(a,-b)$. To see this, set $\Delta_2:=\left[\begin{smallmatrix}
    0&1\\1&0
\end{smallmatrix}\right]$, and then $(I_{2k}\otimes\Delta_2)\widehat H_{4k}(a,b)(I_{2k}\otimes\Delta_2)^\top=\widehat H_{2k}(a,-b)$.

Finally, according to Remark \ref{mu_rem}, $H_{2k} (\mu)$ with $|\mu| >1$ can be replaced by $H_{2k} (\mu)$ with $0<|\mu| <1$. In the first case, the argument above leads to $\widehat H_{4k}(a,b)$ with $a^2 + b^2 >1$ and in the second to $\widehat H_{4k}(a,b)$ with $a^2 + b^2 <1$.

\medskip

{\bf Step 3}: As a consequence of Steps 1--2, the matrix $A$ is $^*$\!congruent to a direct sum of blocks of Types (i)--(iv) in the statement, that we denote by $C^r_A$. Since both $A$ and $C^r_A$ have real entries, Theorem \ref{di_th} guarantees that  they are also real-congruent.

\medskip

{\bf Step 4 (uniqueness)}: Assume that there are two different direct sums of blocks of Types (i)--(iv), denoted by $C_1$ and $C_2$, which are real-congruent. Then, they are also $^*$\!congruent. But, as we have seen before, blocks of Type (i)--(iv) are $^*$\!congruent to a direct sum of blocks of Types 0, I, and II in Theorem \ref{hs_th}. More precisely, blocks of Type-(i) correspond to Type-0 blocks; blocks of Type-(ii) correspond to either blocks of Type-I with $\mu=\pm1$ or to a direct sum of Type-I blocks when $\mu =a + \ii b \neq\pm1$; blocks of Type-(iii) are particular cases of Type-II blocks; and blocks of Type-(iv) are $^*$\!congruent to a direct sum of Type-II blocks. Note also that the Type-I blocks corresponding to different blocks of Type-(ii) are not $^*$\!congruent to each other, since they correspond to different complex numbers $\mu$ with $|\mu|=1$, and the same happens with the Type-II blocks associated with different blocks of Types (iii) and (iv). Let us denote by $\widehat C_1$ and $\widehat C_2$ the direct sum corresponding to the Type 0, I, and II blocks associated with the blocks of $C_1$ and $C_2$, respectively (in the same order). By Theorem \ref{hs_th}, the blocks in $\widehat C_2$ are a permutation of the blocks in $\widehat C_1$ and, then, the blocks in $C_2$ are also a permutation of the blocks in $C_1$.
\end{proof}


\subsection{Relation of the first canonical form for real-congruence of $A$ and the real-KCF of $(A^\top,A)$}
Theorem \ref{thm.1strealCFandKCF} establishes that real-KCF$(A^\top , A)$ determines the canonical form in Theorem \ref{main_th} up to the signs of some parameters in the blocks of Type (ii).
\begin{theorem} \label{thm.1strealCFandKCF}
Let $A \in \R^{n\times n}$. The direct sum asserted in Theorem {\rm\ref{main_th}} determines {\rm real-KCF}$(A^\top, A)$ under strict real-equivalence uniquely up to permutation of its direct summands. Conversely, the {\rm real-KCF}$(A^\top, A)$ under strict real-equivalence determines the direct sum asserted in Theorem {\rm\ref{main_th}} uniquely up to permutation of summands, multiplication of any direct summand of type $\Gamma_k \otimes [\pm 1]$ by $-1$, and multiplication of the parameter $a$ in any direct summand of type $\Gamma_k \otimes \, C(a,b)$ by $-1$. For any direct summand $B$ of {\rm Types (i), (ii), (iii), and (iv)} in Theorem {\rm\ref{main_th}}, the {\rm real-KCF}$(B^\top, B)$ under strict
real-equivalence is given in the following table:
\begin{center}
    {\renewcommand{\arraystretch}{2.2}
    \renewcommand{\tabcolsep}{0.1cm}
    \begin{tabular}{|c|l|}\hline
          Block $B$ in {\rm Th. \ref{main_th}}& \phantom{aaaaaaaaaaaaa}  {\rm real-KCF}$(B^\top,B)$\\ \hline \hline
         $J_k(0)$ &
         \begin{tabular}{ll}
          $(F_\ell , G_\ell) \oplus (F_\ell^\top , G_\ell^\top)$   &  if $k=2\ell +1$ \\
          $(J_\ell (0), I_\ell ) \oplus (I_\ell , J_\ell (0))$    &  if $k=2\ell$
         \end{tabular}
         \\ \hline
         $\Gamma_k\otimes (\pm1)$ & $\left(I_k \, , \, J_k \left( (-1)^{k+1} \right) \, \right)$ \\\hline
          $\Gamma_k\otimes C(a,b)$,\
         \begin{tabular}{l}
          $a,b\in\R$, \\[-0.4cm] $a^2+b^2=1$, $b>0$ \end{tabular}  &
         \begin{tabular}{ll}
         $\left(I_k \, , \, J_k \left( (-1)^{k} \right) \, \right) \oplus \left(I_k \, , \, J_k \left( (-1)^{k} \right) \, \right)$     &  if $a=0,b=1$  \\
         $\left(I_{2k} , C_{2k} (\, (-1)^{k+1} (a^2-b^2) \, , \, 2 |ab| \,)\right)$ & if $a\ne 0$
         \end{tabular}
        \\[0.2cm] \hline
         $H_{2k}(a)$,\  $a\in\R$, $0<|a|<1$   & $\displaystyle (I_k \, , \, J_k (a)\,)  \oplus  \left( I_k \, , \, J_k \left( 1/a \right) \, \right)$ \\[0.2cm] \hline
         $\widehat H_{4k}(a,b)$,\  \begin{tabular}{l} $a,b\in\R$,\\[-0.4cm] $a^2+b^2<1$, $b>0$ \end{tabular} & $\displaystyle (I_{2k}, C_{2k} (a,b) ) \oplus \left( I_{2k}, C_{2k} \left(\frac{a}{a^2 + b^2}, \frac{b}{a^2 + b^2} \right) \right)$
         \\[0.2cm] \hline
    \end{tabular}}
\end{center}
\end{theorem}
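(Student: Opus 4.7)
My plan mirrors the strategy of Theorem \ref{hs_th} (b). Since the decomposition of Theorem \ref{main_th} reads $A = P(B_1 \oplus \cdots \oplus B_q) P^\top$ with $P\in\R^{n\times n}$ invertible, one has
\begin{equation*}
P(A^\top, A) P^\top = (B_1^\top, B_1) \oplus \cdots \oplus (B_q^\top, B_q),
\end{equation*}
so that {\rm real-KCF}$(A^\top, A)$ is the direct sum of the {\rm real-KCF}s of the summands. The forward direction thus reduces to a block-by-block verification of the table. I will repeatedly use the strict real-equivalence $(B^\top, B) \stackrel{r}{\approx} (I, B^{-\top}B)$ followed by a real-similarity analysis of the cosquare $B^{-\top}B$, invoking Lemma \ref{lemm.strictrealpenc} whenever an intermediate step produces only a $\C$-strict equivalence between real pencils.

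For $B = J_k(0)$, the pair is real and the argument of Theorem \ref{hs_th} (b) applies verbatim. For $B = \pm\Gamma_k$ the sign cancels in $B^{-\top}B = \Gamma_k^{-\top}\Gamma_k$, which Lemma \ref{lemm.problocksHS} identifies up to real similarity with $J_k((-1)^{k+1})$; this is the source of the $\pm 1$ ambiguity in the converse. For $B = H_{2k}(a)$ with real $a$, Lemma \ref{lemm.problocksHS} directly yields $B^{-\top}B$ real-similar to $J_k(a) \oplus J_k(1/a)$. For $B = \widehat H_{4k}(a,b)$, a short block computation gives $B^{-\top}B = C_{2k}(a,b) \oplus C_{2k}(a,b)^{-\top}$, and the complex Jordan form $J_k(1/\mu) \oplus J_k(1/\overline\mu)$ of $C_{2k}(a,b)^{-\top}$ (with $\mu = a + \ii b$) is packed by Lemma \ref{cab_lemma} into $C_{2k}\bigl(a/(a^2+b^2),\, b/(a^2+b^2)\bigr)$.

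The delicate row is $B = \Gamma_k \otimes C(a, b)$ with $a^2 + b^2 = 1$ and $b > 0$. Orthogonality of $C(a,b)$ yields $C(a,b)^{-\top} = C(a,b)$, hence $B^{-\top}B = \Gamma_k^{-\top}\Gamma_k \otimes C(a^2 - b^2,\, 2ab)$. When $a = 0, b = 1$, the right factor is $-I_2$ and the real Jordan form of $-\Gamma_k^{-\top}\Gamma_k$ is $J_k((-1)^k)$, producing the duplicated split recorded in the table. When $a \ne 0$, the complex Jordan form of the cosquare is $J_k(\gamma) \oplus J_k(\overline\gamma)$ with $\gamma = (-1)^{k+1}(a^2-b^2) + \ii (-1)^{k+1}\, 2ab$; Lemma \ref{cab_lemma} combined with the real similarity $C_{2k}(\alpha, -\beta)\sim C_{2k}(\alpha, \beta)$ (realised by conjugation with $I_k \otimes \left[\begin{smallmatrix} 0 & 1 \\ 1 & 0 \end{smallmatrix}\right]$) packs this pair into $C_{2k}\bigl((-1)^{k+1}(a^2 - b^2),\, 2|ab|\bigr)$. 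I expect this row to be the main obstacle: it simultaneously requires squaring inside $C(a,b)$, enforcing the convention $b > 0$ for the final $C_{2k}$ block, and treating the degenerate case $a = 0$ separately.

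For the converse, inspecting the completed table shows that each row produces real-KCF summands whose eigenvalue location (zero; real on or off the unit circle; non-real on or off the unit circle) and combinatorial pattern (isolated, reciprocal pair, or duplicated copy) uniquely single out the originating block type. In particular, a $\pm\Gamma_k$ block and a $\Gamma_{k'} \otimes C(0,1)$ block never overlap, since they contribute Jordan blocks at eigenvalues of opposite sign for the same size. All remaining parameters are recovered uniquely apart from (a) the overall sign of $\pm \Gamma_k$, which never reaches the cosquare, and (b) the sign of $a$ in $\Gamma_k \otimes C(a,b)$, which is recorded only through $a^2 - b^2$ and $|ab|$; these are exactly the two indeterminacies stated in the theorem.
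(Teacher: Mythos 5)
Your proposal is correct and follows essentially the same route as the paper: reduce to the block-by-block table, pass to the $^*$cosquare via Lemmas \ref{lemm.problocksHS}, \ref{cab_lemma} and \ref{lemm.commukron}, handle the sign of the second parameter of $C_{2k}$ by conjugating with $I_k\otimes\left[\begin{smallmatrix}0&1\\1&0\end{smallmatrix}\right]$, and invoke Lemma \ref{lemm.strictrealpenc} to convert the complex strict equivalences into real ones. The only cosmetic difference is that for $\widehat H_{4k}(a,b)$ you compute the cosquare $C_{2k}(a,b)\oplus C_{2k}(a,b)^{-\top}$ directly instead of routing through $H_{2k}(\mu)\oplus H_{2k}(\overline\mu)$ as in the paper, which changes nothing of substance.
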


\begin{remark} \label{rem.recovereal} Observe that, given a matrix pair $(I_{2k}, C_{2k} (c,d))$, $c,d\in \R$, $c^2 + d^2 = 1$, and $d >0$, the equality
$$(I_{2k}, C_{2k} (c,d)) = \left(I_{2k} \, , \, C_{2k} (\, (-1)^{k+1} (a^2-b^2) \, , \, 2 |ab| \,)\right),$$
with $a,b\in \R$, $a^2 + b^2 = 1$, and $b >0$, holds if and only if
$$
a = \pm \sqrt{\frac{1+ (-1)^{k+1} c}{2}} \quad \mbox{and} \quad b = \frac{d}{\sqrt{2 \left(1+ (-1)^{k+1} c \right)}}.
$$
This allows us to determine explicitly, up to the sign of $a$, the block $B = \Gamma_k \otimes C(a,b)$ from the {\rm real-KCF}$(B^\top , B)$.
\end{remark}
\noindent
{\em Proof of Theorem {\rm\ref{thm.1strealCFandKCF}}}. An argument analogous to that at the beginning of the proof of Theorem \ref{hs_th} (b) shows that it suffices to prove the results in the table in the statement. For this purpose, we will perform general strict equivalences, i.e., multiplications by invertible matrices that may be complex, on each real pair $(B^\top,B)$ in the table  until we get the desired real target pair. Then, we apply Lemma \ref{lemm.strictrealpenc} to conclude that $(B^\top,B)$ and the target pair are strictly real-equivalent.

\medskip \noindent (1)
The real-KCF of $(J_k (0)^\top , J_k (0) )$ follows from the first row in the table of Theorem \ref{hs_th} (b) and Lemma \ref{lemm.strictrealpenc}.

\medskip \noindent (2)
The real-KCF of $(\, (\Gamma_k \otimes N)^\top , \Gamma_k \otimes N )$ with $N=\pm 1$ follows from the second row in the table of Theorem \ref{hs_th} (b) with $\mu = \pm 1$ and Lemma \ref{lemm.strictrealpenc}.

\medskip \noindent (3)
Next, we obtain the real-KCF of $(\, (\Gamma_k \otimes N)^\top , \Gamma_k \otimes N )$ with $N= C(a,b)$, with $a,b \in \R$, $a^2 + b^2 =1$, and $b>0$. Observe that in this case $C(a,b)^\top = C(a,b)^{-1}$. Thus,
\begin{align}
(\, (\Gamma_k \otimes C(a,b))^\top , \Gamma_k \otimes C(a,b) ) & = ( \Gamma_k^\top \otimes C(a,b)^\top , \Gamma_k \otimes C(a,b) ) \approx (I_{2k} ,  \Gamma_k^{-\top} \Gamma_k \otimes C(a,b)^2 ) \nonumber \\ & \approx (I_{2k} , C(a,b)^2 \otimes \Gamma_k^{-\top} \Gamma_k), \label{eq.auxx1rconrKCF}
\end{align}
where the last strict equivalence follows from Lemma \ref{lemm.commukron}. If $W$ and $D(\mu)$ are the matrices in Lemma \ref{mu_lemma}, $C(a,b)^2 = W D(\mu)^2 W^*$, which combined with \eqref{eq.auxx1rconrKCF} and Lemma \ref{lemm.problocksHS}, yields
\begin{equation} \label{eq.auxx2rconrKCF}
(\, (\Gamma_k \otimes C(a,b))^\top , \Gamma_k \otimes C(a,b) ) \approx (I_{2k} ,  D(\mu)^2 \otimes J_k ((-1)^{k+1}) ).
\end{equation}
If $a = 0$, then $b=1$, $\mu^2 = (\ii b)^2 = -1$,  $\overline{\mu}^2 = (-\ii b)^2 = -1$, $D(\mu)^2 = -I_2$, and \eqref{eq.auxx2rconrKCF} reads
$$
(\, (\Gamma_k \otimes C(a,b))^\top , \Gamma_k \otimes C(a,b) ) \approx (I_{2k} ,  J_k ((-1)^{k}) \oplus J_k ((-1)^{k})),
$$
which is the desired real-KCF. Then, the use of Lemma \ref{lemm.strictrealpenc} completes the proof. On the other hand, if $a\ne 0$, then \eqref{eq.auxx2rconrKCF} implies
\begin{align*}
(\, (\Gamma_k \otimes C(a,b))^\top , \Gamma_k \otimes C(a,b) ) & \approx (I_{2k} \, ,  \, J_k ((-1)^{k+1} \, \mu^2 )
\oplus J_k ((-1)^{k+1} \, \overline{\mu}^2) ) \\ & \approx (\, I_{2k} \, , \, C_{2k} (\, (-1)^{k+1} (a^2-b^2) \, , \,  (-1)^{k+1} \, (2ab) \,) \, ),
\end{align*}
where the last strict equivalence follows from Lemma \ref{cab_lemma} with $\mu$ replaced by $(-1)^{k+1} \, \mu^2$. If $ (-1)^{k+1} \, (2ab) = 2 |ab|$, we have obtained the target real-KCF. If not, multiply the pair above on the left and on the right by $I_k \otimes \left[ \begin{smallmatrix}
0 & 1 \\ 1 & 0
\end{smallmatrix}\right]$. In one case or in another, the use of Lemma \ref{lemm.strictrealpenc} completes again the proof.

\medskip \noindent (4) The real-KCF of  $(H_{2k} (a)^\top , H_{2k} (a))$, with $a \in \R$, $0<|a| <1$, follows from the last row in the table of Theorem \ref{hs_th} (b) and Lemma \ref{lemm.strictrealpenc} (recall Remark \ref{mu_rem}).

\medskip \noindent (5) Finally, we obtain the real-KCF of $(\widehat{H}_{4k} (a,b)^\top , \widehat{H}_{4k} (a,b))$, with $a,b \in \R$, $a^2 + b^2 <1$, and $b>0$. For this purpose, note that from {\bf Step 2} in the proof of Theorem \ref{main_th} (observe that $\widehat{H}_{4k} (a,b)^\top = \widehat{H}_{4k} (a,b)^*$), Lemma \ref{lemm.problocksHS} and Lemma \ref{cab_lemma}, we have the following
\begin{align*}
\left(\widehat{H}_{4k} (a,b)^\top , \widehat{H}_{4k} (a,b)\right) & \approx
\left( \, \begin{bmatrix}
     H_{2k}(\mu)^* &0\\0&H_{2k}(\overline\mu)^*
\end{bmatrix} \, , \, \begin{bmatrix}
     H_{2k}(\mu)&0\\0&H_{2k}(\overline\mu)
\end{bmatrix} \, \right) \\
 & \approx
 \left(\, I_{4k} \, , \, \begin{bmatrix}
     H_{2k}(\mu)^{-*} H_{2k}(\mu)&0\\0& H_{2k}(\overline\mu)^{-*} H_{2k}(\overline\mu)
\end{bmatrix} \, \right) \\
&  \approx (\, I_{4k} \, , \, J_k (\mu) \oplus J_k (1/\overline\mu) \oplus J_k (\overline\mu) \oplus J_k (1/\mu) \,) \\
&  \approx (\, I_{4k} \, , \, J_k (\mu)  \oplus J_k (\overline\mu) \oplus J_k (1/\overline\mu) \oplus J_k (1/\mu) \,) \\
&  \approx \left(\, I_{4k} \, , \,C_{2k} (a,b)  \oplus C_{2k} \left(\frac{a}{a^2 + b^2}, \frac{b}{a^2 + b^2} \right) \,\right),
\end{align*}
which is the target real-KCF. Again, the use of Lemma \ref{lemm.strictrealpenc} completes the proof.
\qed

\section{Second canonical form of real matrices for real-congruence: block tridiagonal form} \label{sec.secondcan}

Theorem \ref{second_main_th} establishes a real counterpart of the FHSCF presented in Theorem \ref{thm.fhs} (a). It is one of the two main results of this paper. Theorem \ref{second_main_th} will be complemented in Theorem \ref{thm.2strealCFandKCF} with the relation between the second canonical form of $A\in \R^{n\times n}$ for real-congruence and the real-KCF of $(A^\top,A)$. The relation between the real canonical forms in Theorems \ref{main_th} and \ref{second_main_th} is established in Section \ref{sec.comparisontworeals}. Recall that the matrix $\widehat{T}_{4k} (a,b)$ is the matrix defined in \eqref{eq.blocktri3} with $k$ replaced by $2k$.

\begin{theorem}\label{second_main_th}
    Each square matrix $A\in\R^{n\times n}$ is real-congruent to a direct sum, uniquely determined up to permutation of summands, of canonical matrices of the following three types
 \begin{center}
    {\renewcommand{\arraystretch}{1.8}
     \renewcommand{\tabcolsep}{0.3cm}
    \begin{tabular}{|c|c|l|}\hline Name&Block&\phantom{aaaaa}Conditions\\\hline\hline
        {\rm  Type Tri-(i)} & $T_k(a)$  &
         \begin{tabular}{l}
         $a \in \R$, $a \ne \pm 1$, \\[-0.2cm]
         each nonzero $a$ is determined \\[-0.2cm]
         up to replacement by $a^{-1}$,\\[-0.2cm]
         $a = 0$ if $k$ is odd
         \end{tabular} \\[0.2cm] \hline
          {\rm Type Tri-(ii)} & $\widehat{T}_{4k} (a,b)$ &\begin{tabular}{l}
         $a,b\in\R$, $a^2+b^2 \ne 1$, and $b>0$, \\[-0.2cm]
         $(a,b)$ is determined up to \\
         replacement by  $\displaystyle \left(\frac{a}{a^2 + b^2} \, , \, \frac{b}{a^2 + b^2}\right)$
         \\[0.3cm]
         \end{tabular} \\[0.2cm] \hline
        {\rm   Type Tri-(iii)} & $\widetilde{\Gamma}_k \otimes N$ & \begin{tabular}{l} where $N = \pm1$ or \\[-0.2cm]
         $N = C(a,b)$, with $a,b\in\R$, $a^2+b^2=1$, and $b>0$ \end{tabular}  \\\hline
    \end{tabular}}
    \end{center}
In the matrices of {\rm Type Tri-(iii)}, $\widetilde{\Gamma}_k \otimes N$ can be replaced by $N \otimes \widetilde{\Gamma}_k$.
\end{theorem}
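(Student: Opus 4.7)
The plan is to mirror the proof of Theorem \ref{main_th}, replacing the HSCF of Theorem \ref{hs_th}(a) throughout by the FHSCF of Theorem \ref{thm.fhs}(a). Start from a $^*$\!congruence $A = P\,C_A\,P^*$ with $C_A$ a direct sum of blocks $T_k(\mu)$ ($|\mu|\ne 1$, and $\mu=0$ if $k$ is odd) and $\mu\widetilde\Gamma_k$ ($|\mu|=1$). Conjugating this identity and using that $A$ is real yields $A = \overline P\,\overline{C_A}\,P^\top$, so $\overline{C_A}$ is also an FHSCF of $A$. Since $\overline{T_k(\mu)} = T_k(\overline\mu)$ and $\overline{\mu\widetilde\Gamma_k} = \overline\mu\,\widetilde\Gamma_k$, the uniqueness part of Theorem \ref{thm.fhs}(a) forces each non-real $T_k(\mu)$-block in $C_A$ to be paired with a $T_k(\overline\mu)$-block (with $k$ necessarily even, since such a $\mu$ is nonzero), and similarly for the non-real Type Tri-II blocks. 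The real blocks are already in the target form: $T_k(a)$ with $a\in\R$, $a\ne\pm 1$, is of Type Tri-(i), and $\pm\widetilde\Gamma_k = \widetilde\Gamma_k\otimes(\pm 1)$ is of Type Tri-(iii) with $N=\pm 1$.

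The two kinds of conjugate pairs are reduced to real blocks by computations parallel to Steps 1--2 of the proof of Theorem \ref{main_th}. For $T_k(\mu)\oplus T_k(\overline\mu)$ with $\mu=a+\ii b\notin\R$, Lemma \ref{lemm.reltridiablocks} provides a unitary $^*$\!congruence to $\widehat T_{2k}(a,b)$; writing $k=2k'$ rewrites this as $\widehat T_{4k'}(a,b)$, which is a Type Tri-(ii) block. The condition $|\mu|\ne 1$ translates to $a^2+b^2\ne 1$, and the FHSCF ambiguity $\mu\sim\overline\mu^{-1}$ translates to $(a,b)\sim\bigl(a/(a^2+b^2),b/(a^2+b^2)\bigr)$; the sign of $b$ is fixed to $b>0$ by the real-congruence $I_{2k'}\otimes\bigl[\begin{smallmatrix}0&1\\1&0\end{smallmatrix}\bigr]$, exactly as in Step 2 of the proof of Theorem \ref{main_th}. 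For $\mu\widetilde\Gamma_k\oplus\overline\mu\,\widetilde\Gamma_k$ with $|\mu|=1$, $\mu\notin\R$, the same Kronecker-swap used in Step 1 of the proof of Theorem \ref{main_th}, namely
\[
\begin{bmatrix}\mu&0\\0&\overline\mu\end{bmatrix}\otimes\widetilde\Gamma_k
\;=\;\Pi\bigl(\widetilde\Gamma_k\otimes D(\mu)\bigr)\Pi^\top
\;=\;\Pi\,(I_k\otimes W)^*\bigl(\widetilde\Gamma_k\otimes C(a,b)\bigr)(I_k\otimes W)\,\Pi^\top,
\]
together with Lemmas \ref{lemm.commukron} and \ref{mu_lemma}, produces a unitary $^*$\!congruence to $\widetilde\Gamma_k\otimes C(a,b)$; the sign flip $b\mapsto -b$ is again handled as in Step 1 of Theorem \ref{main_th}. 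The interchange $\widetilde\Gamma_k\otimes N \leftrightarrow N\otimes\widetilde\Gamma_k$ asserted at the end of the statement follows immediately from Lemma \ref{lemm.commukron}.

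Assembling all of the above, $A$ is $^*$\!congruent to a direct sum of blocks of Types Tri-(i)--Tri-(iii); since both sides are real, Theorem \ref{di_th} upgrades this to a real-congruence, establishing existence. For uniqueness, I would argue exactly as in Step 4 of the proof of Theorem \ref{main_th}: two real-congruent direct sums of Tri-blocks are $^*$\!congruent; each Tri-block is $^*$\!congruent to a prescribed multiset of FHSCF blocks (a single FHSCF block for Types Tri-(i) and Tri-(iii) with $N=\pm 1$; a conjugate pair of FHSCF blocks for Type Tri-(ii) and Type Tri-(iii) with $N=C(a,b)$); and distinct Tri-blocks yield distinct multisets of FHSCF blocks, since they are associated with distinct $\mu$-parameters (or equivalence classes thereof). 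Hence the uniqueness in Theorem \ref{thm.fhs}(a) transfers. The only delicate point in the whole plan is the pairing-and-reduction step for the tridiagonal blocks, which is precisely what Lemma \ref{lemm.reltridiablocks} was established for; the rest is careful bookkeeping of the correspondence between $(\mu,k)$-data of FHSCF and the real-parameter data of the Tri-blocks, including the indeterminacies.
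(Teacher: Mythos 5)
Your proposal is correct and follows essentially the same route as the paper: the paper's proof also reduces everything to the proof of Theorem \ref{main_th}, pairing up the conjugate blocks of the FHSCF, handling $\mu\widetilde\Gamma_k\oplus\overline\mu\widetilde\Gamma_k$ exactly as in Step 1 with $\Gamma_k$ replaced by $\widetilde\Gamma_k$, using Lemma \ref{lemm.reltridiablocks} (plus the $b\mapsto -b$ flip and the relabeling $k=2k'$) for the $T_k(\mu)\oplus T_k(\overline\mu)$ pairs, and invoking Theorem \ref{di_th} for existence and the uniqueness of the FHSCF for uniqueness. No gaps.
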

\begin{proof}
The proof is very similar to that of Theorem \ref{main_th}, but based on the FHSCF in Theorem \ref{thm.fhs} (a) instead of the HSCF of Theorem \ref{hs_th} (a). Therefore, we focus on the relevant differences and sketch very briefly the similar parts. According to Theorem \ref{thm.fhs} (a) there exists a nonsingular matrix $P\in \C^{n \times n}$ such that $A = P F_A P^*$, where $F_A$ is a direct sum of canonical matrices of types Tri-I and Tri-II. The fact that $A$ is real implies via the same argument in the first paragraph of the proof of Theorem \ref{main_th} that for $\mu \in \C\setminus \R$ the blocks $T_k (\mu)$,  $T_k (\overline\mu)$ are paired up in $F_A$ (note that $k$ is even in such $T_k (\mu)$ because $0 \notin \C\setminus \R$), as well as the blocks $\mu\, \widetilde{\Gamma}_k,\overline\mu\,\widetilde{\Gamma}_k$.

Then, we divide the proof in four steps as in Theorem \ref{main_th}. Steps 3 and 4 are identical to those in Theorem \ref{main_th}, and Step 1 is also equal except for the fact that $\Gamma_k$ is now replaced by $\widetilde{\Gamma}_k$. This step proves that every block $\left[\begin{smallmatrix}
    \mu\, \widetilde\Gamma_k&0\\0&\overline\mu\, \widetilde\Gamma_k
\end{smallmatrix}\right]$, with  $\mu \in \C\setminus \R$,  is $^*$\!congruent to a block $\widetilde{\Gamma}_k \otimes C(a,b)$ with $b>0$ as in the statement. Then, the only difference with respect to the proof of Theorem \ref{main_th} is in Step 2, which now deals with $\left[\begin{smallmatrix}
    T_{k}(\mu)&0\\0&T_{k}(\overline\mu)
\end{smallmatrix}\right]$, where $\mu=a+\ii b$, $a, b \in \R$, $b\ne 0$, $|\mu| \ne 1$, and $k$ even. According to Lemma \ref{lemm.reltridiablocks}, $\left[\begin{smallmatrix}
    T_{k}(\mu)&0\\0&T_{k}(\overline\mu)
\end{smallmatrix}\right]$ is (unitarily) $^*$\!congruent to
$\widehat{T}_{2k} (a,b)$. If $b>0$, the proof is complete. Otherwise, perform the (unitary) $^*$congruence $(I_k \otimes \left[\begin{smallmatrix}
0 & 1 \\ 1 & 0
\end{smallmatrix} \right] ) \, \widehat{T}_{2k} (a,b) \, (I_k \otimes \left[\begin{smallmatrix}
0 & 1 \\ 1 & 0
\end{smallmatrix} \right])^\top = \widehat{T}_{2k} (a,-b)$. Since $k$ is even, we replace $\widehat{T}_{2k} (a,b)$ by $\widehat{T}_{4k} (a,b)$ in the statement.
\end{proof}

\subsection{Relation of the second canonical form for real-congruence of $A$ and
the real-KCF of $(A^\top, A)$}
Theorem \ref{thm.2strealCFandKCF} establishes that real-KCF$(A^\top , A)$ determines the canonical form in Theorem \ref{second_main_th} up to the signs of some parameters in the blocks of Type Tri-(iii).
\begin{theorem} \label{thm.2strealCFandKCF}
Let $A \in \R^{n\times n}$. The direct sum asserted in Theorem {\rm\ref{second_main_th}} determines the {\rm real-KCF}$(A^\top, A)$ under strict real-equivalence uniquely up to permutation of its direct summands. Conversely, the {\rm real-KCF}$(A^\top, A)$ under strict real-equivalence determines the direct sum asserted in Theorem {\rm\ref{second_main_th}} uniquely up to permutation of summands, multiplication of any direct summand of type $\widetilde{\Gamma}_k \otimes [\pm 1]$ by $-1$, and multiplication of the parameter $a$ in any direct summand of type $\widetilde{\Gamma}_k \otimes \, C(a,b)$ by $-1$. For any direct summand $B$ of {\rm Types Tri-(i), Tri-(ii), and Tri-(iii)} in Theorem {\rm\ref{second_main_th}}, the {\rm real-KCF}$(B^\top, B)$ under strict real-equivalence is given in the following table:
\begin{center}
    {\renewcommand{\arraystretch}{2.3}
    \renewcommand{\tabcolsep}{0.1cm}
    \begin{tabular}{|c|l|}\hline
         \begin{tabular}{c} Block $B$ in {\rm Th. \ref{second_main_th}}\end{tabular}& \phantom{aaaaaaaaaaaaaaaaa} {\rm real-KCF}$(B^\top,B)$\\ \hline \hline
          $T_k(a)$\,,\
         \begin{tabular}{l}
         $a \in \R$, $a \ne \pm 1$, \\[-0.3cm]
         $a = 0$ if $k$ is odd
         \end{tabular}&
         \begin{tabular}{ll}
          $(F_\ell , G_\ell) \oplus (F_\ell^\top , G_\ell^\top)$   &  if $k=2\ell +1$ \\[-0.2cm]
          $(J_\ell (0), I_\ell ) \oplus (I_\ell , J_\ell (0))$    &  if $k=2\ell$, $a=0$ \\[-0.1cm]
           $\displaystyle (I_\ell  ,  J_\ell (a) ) \oplus \left(I_\ell \, , \, J_\ell \left( 1/a \right) \right)$    &  if $k=2\ell$, $a\ne 0$ \\[0.2cm]
         \end{tabular}
         \\[0.2cm]  \hline
         $\widehat{T}_{4k} (a,b)$\,,\ \begin{tabular}{l} $a,b\in\R$,  \\[-0.3cm]
         $a^2+b^2 \ne 1$, $b>0$ \\[0.1cm]
         \end{tabular}
         & $\displaystyle (I_{2k}, C_{2k} (a,b) ) \oplus \left( I_{2k}, C_{2k} \left(\frac{a}{a^2 + b^2}, \frac{b}{a^2 + b^2} \right) \right)$
          \\[0.2cm]  \hline
         $\widetilde{\Gamma}_k\otimes [\pm1]$& $\left(I_k \, , \, J_k \left( (-1)^{k+1} \right) \, \right)$ \\\hline
         $\widetilde\Gamma_k\otimes C(a,b)$\,,\
         \begin{tabular}{l}
         $a,b\in\R$, \\[-0.4cm] $a^2+b^2=1$, $b>0$ \end{tabular}  &
         \begin{tabular}{ll}
         $\left((I_k \, , \, J_k \left( (-1)^{k} \right) \, ) \oplus (I_k \, , \, J_k \left( (-1)^{k} \right) \, \right)$     &  if $a=0,b=1$  \\
         $\left(I_{2k} , C_{2k} (\, (-1)^{k+1} (a^2-b^2) \, , \, 2 |ab| \,)\right)$ & if $a\ne 0$
         \end{tabular}
        \\[0.3cm] \hline
    \end{tabular}}
\end{center}
\end{theorem}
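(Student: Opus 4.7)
The plan is to follow closely the strategy used to prove Theorem~\ref{thm.1strealCFandKCF}. Writing the real-congruence $A = P\,C^r_A\,P^\top$ guaranteed by Theorem~\ref{second_main_th}, forming the pair $((C^r_A)^\top, C^r_A)$, and using that real-KCF commutes with direct sums reduces the entire statement to computing {\rm real-KCF}$(B^\top,B)$ for each individual canonical block $B$ of Types Tri-(i), Tri-(ii), and Tri-(iii). To upgrade a general strict equivalence over $\C$ to a strict real-equivalence I would invoke Lemma~\ref{lemm.strictrealpenc} at the end of each calculation. Once the forward direction is settled, the converse (uniqueness up to the advertised sign ambiguities) follows by inspecting the table: the parameter $a \ne \pm 1$ of $T_k(a)$ and the pair $(a,b)$ of $\widehat T_{4k}(a,b)$ are determined by their Jordan and complex-conjugate eigenvalues modulo the normalizations $a \mapsto a^{-1}$ and $(a,b) \mapsto (a/(a^2+b^2),b/(a^2+b^2))$ already built into Theorem~\ref{second_main_th}; the parameters of $\widetilde\Gamma_k \otimes C(a,b)$ are recovered up to the sign of $a$ via Remark~\ref{rem.recovereal}; and the blocks $\widetilde\Gamma_k \otimes [\pm 1]$ cannot be distinguished from each other, since any global sign cancels in the cosquare.

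I would handle the four table entries in order. For Type Tri-(i) blocks $T_k(a)$, $a \in \R$, the KCF over $\C$ of $(T_k(a)^*, T_k(a))$ is given directly by Theorem~\ref{thm.fhs}(b), and since everything in sight is real, Lemma~\ref{lemm.strictrealpenc} converts this KCF into the target real-KCF. For the two sub-cases of Type Tri-(iii), rather than computing $\widetilde\Gamma_k^{-\top}\widetilde\Gamma_k$ directly, I would exploit the $^*$congruence
\begin{equation*}
\widetilde\Gamma_k \otimes C(a,b) \quad \text{is $^*$congruent to} \quad \begin{bmatrix} \mu\widetilde\Gamma_k & 0 \\ 0 & \overline\mu\widetilde\Gamma_k \end{bmatrix}, \qquad \mu = a + \ii b,\ |\mu| = 1,\ b > 0,
\end{equation*}
already established in Step~1 of the proof of Theorem~\ref{second_main_th}. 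The last row of the table of Theorem~\ref{thm.fhs}(b) then gives that the corresponding pair is strictly equivalent to $(I_k, J_k((-1)^{k+1}\mu^2)) \oplus (I_k, J_k((-1)^{k+1}\overline\mu^2))$, and Lemma~\ref{cab_lemma} (followed, if needed, by the $2 \times 2$ permutation-swap $b \mapsto -b$ that enforces the positivity convention on the second parameter) re-packages this as the real-KCF of the table; the degenerate case $a = 0$ is handled automatically because then $\mu^2 = -1$ forces the two Jordan blocks to become real and equal. The sub-case $N = \pm 1$ is the same calculation with $\mu = \pm 1$ and in fact reduces directly to the last row of Theorem~\ref{thm.fhs}(b).

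Finally, for Type Tri-(ii), I would apply Lemma~\ref{lemm.reltridiablocks} with $k$ replaced by $2k$, or equivalently Step~2 of the proof of Theorem~\ref{second_main_th}, to obtain
\begin{equation*}
(\widehat T_{4k}(a,b)^*, \widehat T_{4k}(a,b)) \;\approx\; \bigl( T_{2k}(\mu)^* \oplus T_{2k}(\overline\mu)^*,\; T_{2k}(\mu) \oplus T_{2k}(\overline\mu) \bigr),
\end{equation*}
with $\mu = a + \ii b$, $|\mu| \ne 1$, $b > 0$. The third row of the table of Theorem~\ref{thm.fhs}(b), applied to each summand in the case $k = 2\ell$, $\mu \ne 0$, yields the four-fold direct sum $(I_k, J_k(\mu)) \oplus (I_k, J_k(1/\overline\mu)) \oplus (I_k, J_k(\overline\mu)) \oplus (I_k, J_k(1/\mu))$; regrouping the complex-conjugate pairs $\{\mu, \overline\mu\}$ and $\{1/\overline\mu, 1/\mu\}$ and applying Lemma~\ref{cab_lemma} to each pair produces $(I_{2k}, C_{2k}(a,b)) \oplus (I_{2k}, C_{2k}(a/(a^2+b^2), b/(a^2+b^2)))$, which is the table entry (and $b > 0$ is preserved). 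A final call to Lemma~\ref{lemm.strictrealpenc} concludes the argument. I do not anticipate any genuine conceptual obstacle, since all the needed machinery is already in the paper; the main chore will be the sign and parity bookkeeping, above all keeping the factor $(-1)^{k+1}$ that arises from the $\widetilde\Gamma$-cosquare consistent throughout and identifying precisely when the swap $b \mapsto -b$ must be inserted to match the positivity convention $b > 0$ of real-KCF.
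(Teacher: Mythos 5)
Your proposal is correct and follows essentially the same route as the paper: reduce to the table via the direct-sum argument, treat each block type using Theorem \ref{thm.fhs}(b), Lemmas \ref{lemm.reltridiablocks} and \ref{cab_lemma}, and finish with Lemma \ref{lemm.strictrealpenc}. The only (harmless) variation is in the $\widetilde\Gamma_k\otimes C(a,b)$ case, where the paper computes the cosquare $(I_{2k},\widetilde\Gamma_k^{-\top}\widetilde\Gamma_k\otimes C(a,b)^2)$ directly while you pass through the $^*$congruence with $\mu\widetilde\Gamma_k\oplus\overline\mu\,\widetilde\Gamma_k$ and the FHS table; both land on $(I_k,J_k((-1)^{k+1}\mu^2))\oplus(I_k,J_k((-1)^{k+1}\overline\mu^2))$ and proceed identically from there.
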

\begin{proof} As in the proof of Theorem \ref{thm.1strealCFandKCF}, it suffices to prove the results in the table in the statement. We will proceed case by case via analogous manipulations to those in Theorem \ref{thm.1strealCFandKCF}.

Before we start note that the second row in the table of Theorem \ref{thm.fhs} (b) with $\mu=1$ implies $(\widetilde{\Gamma}_k^\top , \widetilde{\Gamma}_k) \approx (I_k ,J_k ((-1)^{k+1}))$. On the other hand,  $(\widetilde{\Gamma}_k^\top , \widetilde{\Gamma}_k) \approx (I_k , \widetilde{\Gamma}_k^{-\top} \widetilde{\Gamma}_k )$. Thus, $(I_k , \widetilde{\Gamma}_k^{-\top} \widetilde{\Gamma}_k) \approx (I_k ,J_k ((-1)^{k+1}))$, which implies that $\widetilde{\Gamma}_k^{-\top} \widetilde{\Gamma}_k$ is similar to $J_k ((-1)^{k+1})$. We will use this fact in the proof without explicitly referring to.

\medskip \noindent (1) The real-KCF of $(T_k(a)^\top , T_k (a))$ with $a\in \R$, $a\ne \pm 1$ follows from the first row in the table in Theorem \ref{thm.fhs} (b) with $\mu =a$ and Lemma \ref{lemm.strictrealpenc}.

\medskip \noindent (2) We obtain the real-KCF of $(\widehat{T}_{4k} (a,b)^\top , \widehat{T}_{4 k} (a,b))$, $a,b \in \R$, $a^2 + b^2 \ne 1$, $b>0$, from Lemma \ref{lemm.reltridiablocks}, the table in Theorem \ref{thm.fhs} (b) with $\mu = a + \ii b$, and Lemma \ref{cab_lemma} as follows
\begin{align*}
(\widehat{T}_{4k} (a,b)^\top , \widehat{T}_{4 k} (a,b)) & \approx  (T_{2k} (\mu)^* \oplus T_{2k} (\overline \mu)^* , T_{2k} (\mu) \oplus T_{2k} (\overline \mu)) \\ & = (T_{2k} (\mu)^* , T_{2k} (\mu)) \oplus (T_{2k} (\overline \mu)^* , T_{2k} (\overline \mu)) \\
 & \approx  (I_k \, , \, J_k (\mu)\,)  \oplus  \left(I_k \, , \, J_k \left( 1/\overline{\mu} \right) \, \right) \oplus (I_k \, , \, J_k (\overline \mu)\,)  \oplus  \left(I_k \, , \, J_k \left( 1/\mu \right) \, \right) \\
 & \approx  (I_k \, , \, J_k (\mu)\,)  \oplus (I_k \, , \, J_k (\overline \mu)\,)  \oplus  \left(I_k \, , \, J_k \left( 1/\overline{\mu} \right) \, \right) \oplus  \left(I_k \, , \, J_k \left( 1/\mu \right) \, \right)  \\
 & \approx (I_{2k}, C_{2k} (a,b) ) \oplus \left( I_{2k}, C_{2k} \left(a/(a^2 + b^2), b/(a^2 + b^2) \right) \right).
\end{align*}
The proof is completed by applying Lemma \ref{lemm.strictrealpenc}.

\medskip \noindent (3)  The real-KCF of $(\, (\widetilde{\Gamma}_k \otimes N)^\top , \widetilde{\Gamma}_k \otimes N )$ with $N=\pm 1$ follows from the second row in the table of Theorem \ref{thm.fhs} (b) with $\mu = \pm 1$ and Lemma \ref{lemm.strictrealpenc}.

\medskip \noindent (4) Next, we obtain the real-KCF of $(\, (\widetilde\Gamma_k \otimes C(a,b))^\top , \widetilde\Gamma_k \otimes C(a,b) )$, with $a,b \in \R$, $a^2 + b^2 =1$, and $b>0$. We proceed as in the proof of Theorem \ref{thm.1strealCFandKCF} for $\Gamma_k \otimes C(a,b)$, i.e., case (3) in that proof. So, we skip most of the details. Recall that for these parameters $a,b$, $C(a,b)^\top = C(a,b)^{-1}$. Thus,
\begin{align*}
(\, (\widetilde\Gamma_k \otimes C(a,b))^\top , \widetilde\Gamma_k \otimes C(a,b) ) & \approx (I_{2k} ,  \widetilde\Gamma_k^{-\top} \widetilde\Gamma_k \otimes C(a,b)^2 ) \approx (I_{2k} , C(a,b)^2 \otimes \widetilde\Gamma_k^{-\top} \widetilde\Gamma_k) \\
& \approx (I_{2k} ,  D(\mu)^2 \otimes J_k ((-1)^{k+1}) ),
\end{align*}
where $D(\mu)$ is the matrix in Lemma \ref{mu_lemma}. Note that this strict equivalence is as that in \eqref{eq.auxx2rconrKCF}. Therefore, the rest of the proof is exactly the same as the corresponding one in Theorem \ref{thm.1strealCFandKCF}.
\end{proof}

\section{Relation between the first and the second canonical forms for real congruence} \label{sec.comparisontworeals}

Theorem \ref{thm.relation2} presents the precise real-congruence relations between the canonical blocks in Theorems \ref{main_th} and \ref{second_main_th}. It is a direct consequence of Lemma \ref{equivalencegammas.th} and Theorems \ref{thm.1strealCFandKCF} and \ref{thm.2strealCFandKCF}, so the proof is omitted.

\begin{theorem} \label{thm.relation2} Let $J_k (0), \Gamma_k, H_{2k} (a),  C(a,b), \widehat{H}_{4k} (a,b), T_k(a), \widetilde{\Gamma}_k$, and $\widehat{T}_{2k} (a,b)$ be the matrices in \eqref{jordanblock}-\eqref{eq.blocktri3}, and
\begin{equation*}
        \varepsilon :=\left\{\begin{array}{rc}
             1&\mbox{if $k\equiv1,2$ {\rm(mod 4)}},  \\
             -1&\mbox{if $k\equiv0,3$ {\rm (mod 4)}.}
        \end{array}\right.
\end{equation*}
Then the real-congruences described in the following table hold.
\begin{center}
    {\renewcommand{\arraystretch}{2.2}
    \renewcommand{\tabcolsep}{0.11cm}
    \begin{tabular}{|c|c|}\hline
         \phantom{aaa} Block $B$ in {\rm Theorem \ref{main_th}} & is real-congruent to block $C$ in {\rm Theorem \ref{second_main_th}} \\ \hline \hline
         $J_k(0)$ &
         $T_k (0)$
         \\ \hline
         $\Gamma_k\otimes [\pm1]$& $\widetilde{\Gamma}_k \otimes [\pm\varepsilon]$ \\ \hline
         $\Gamma_k\otimes C(a,b)$,\ \begin{tabular}{l} $a,b\in\R$, \\[-0.4cm] $a^2+b^2=1$, $b>0$ \end{tabular}  &
            $\widetilde{\Gamma}_k \otimes C(\varepsilon a, b)$
        \\[0.2cm] \hline
         $H_{2k}(a)$,  $a\in\R$, $0<|a|<1$   & $T_{2k} (a)$ \\[0.2cm] \hline
         $\widehat H_{4k}(a,b)$,  \begin{tabular}{l} $a,b\in\R$,\\[-0.2cm] $a^2+b^2<1$, $b>0$ \end{tabular} & $\widehat{T}_{4k} (a,b)$
         \\[0.2cm] \hline
    \end{tabular}}
\end{center}
\end{theorem}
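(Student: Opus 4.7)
The plan is to verify each row of the table separately, splitting the five rows into two kinds. Rows 2 and 3 (the ones involving $\Gamma_k$ versus $\widetilde{\Gamma}_k$) admit a direct construction by lifting Lemma \ref{equivalencegammas.th} through a Kronecker product. Rows 1, 4, and 5 are handled indirectly: one computes the real-KCF of the left-hand block via Theorem \ref{thm.1strealCFandKCF}, and then one shows via Theorem \ref{thm.2strealCFandKCF} and the uniqueness in Theorem \ref{second_main_th} that the only direct sum of Theorem \ref{second_main_th} blocks with that real-KCF is the single block displayed on the right.

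For rows 2 and 3, set $Q := P_k S_k$, the orthogonal real matrix from Lemma \ref{equivalencegammas.th}, so that $Q^\top \Gamma_k Q = \varepsilon \widetilde{\Gamma}_k$. For row 2, with $N = \pm 1$, conjugation by $Q$ gives
$Q^\top (\Gamma_k \otimes N)\, Q = \varepsilon \widetilde{\Gamma}_k \otimes N = \widetilde{\Gamma}_k \otimes (\varepsilon N)$,
and since $\varepsilon N \in \{\pm \varepsilon\}$ this is precisely the claimed block. For row 3, conjugation by $Q \otimes I_2$ yields
$(Q\otimes I_2)^\top (\Gamma_k \otimes C(a,b))(Q\otimes I_2) = \widetilde{\Gamma}_k \otimes \varepsilon C(a,b) = \widetilde{\Gamma}_k \otimes C(\varepsilon a, \varepsilon b)$.
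If $\varepsilon = 1$ we are done; if $\varepsilon = -1$, we perform the additional real congruence by $I_k \otimes \Delta_2$ with $\Delta_2 := \left[\begin{smallmatrix}0 & 1 \\ 1 & 0\end{smallmatrix}\right]$, which sends $C(-a,-b)$ to $C(-a,b)$ and produces $\widetilde{\Gamma}_k \otimes C(\varepsilon a, b)$ with $b>0$ as required by Theorem \ref{second_main_th}.

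For rows 1, 4, and 5, the argument runs as follows. Let $B$ denote the left-hand block. By Theorem \ref{second_main_th}, $B$ is real-congruent to a unique (up to permutation of summands) direct sum $D$ of blocks of Types Tri-(i), Tri-(ii), Tri-(iii), whence $(B^\top,B) \stackrel{r}{\approx} (D^\top,D)$ and both pairs have the same real-KCF. Reading real-KCF$(B^\top,B)$ from Theorem \ref{thm.1strealCFandKCF} and matching it against the entries in the table of Theorem \ref{thm.2strealCFandKCF}, one finds that the only admissible $D$ is the single block $C$ listed in the statement: for row 1 the real-KCF consists only of nilpotent or singular summands, forcing a single Tri-(i) block with $a=0$ of size $k$; for row 4 the real-KCF is $(I_k,J_k(a))\oplus(I_k,J_k(1/a))$ with $0<|a|<1$, forcing a single Tri-(i) block $T_{2k}(a)$; and for row 5 the real-KCF involves only pairs of conjugate non-unimodular complex eigenvalues, forcing a single Tri-(ii) block $\widehat{T}_{4k}(a,b)$. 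In all three cases, $C$ is of Type Tri-(i) or Tri-(ii), so by Theorem \ref{thm.2strealCFandKCF} its real-KCF determines it without any sign ambiguity.

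The main obstacle I foresee is purely bookkeeping rather than conceptual: carefully verifying in rows 2 and 3 that the resulting $C(\varepsilon a, \varepsilon b)$ is brought back into the canonical parameter range ($b>0$) dictated by Theorem \ref{second_main_th}, and, in rows 1, 4, 5, tracking the exceptional sub-cases of Theorem \ref{thm.2strealCFandKCF} (odd versus even $k$ for Tri-(i) blocks, and the $a=0$ subcase) to rule out alternative $D$'s with the same real-KCF. Once these sign and parity checks are done, the five rows follow from the stated ingredients without further work.
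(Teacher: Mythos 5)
Your proposal is correct and follows exactly the route the paper intends: the paper omits the proof of Theorem \ref{thm.relation2}, stating that it is a direct consequence of Lemma \ref{equivalencegammas.th} (which you use for the two $\Gamma_k$ rows, via the orthogonal congruence $Q=P_kS_k$ lifted through the Kronecker product, plus the $I_k\otimes\Delta_2$ flip to restore $b>0$) and of Theorems \ref{thm.1strealCFandKCF} and \ref{thm.2strealCFandKCF} (which you use for the remaining rows by matching real-KCFs, correctly noting that no sign ambiguity arises since only Tri-(i) and Tri-(ii) blocks are involved). Your write-up simply makes explicit the details the paper leaves to the reader.
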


\section{Correspondence with the blocks in the Lee-Weinberg canonical form}\label{leewein_sec}
In this section, we relate the blocks in the first canonical form for real-congruence in Theorem \ref{main_th} with those in the real-congruence canonical form in \cite[Theorem II, p. 213]{lw96}.  Once this result is established, the correspondence of the blocks in \cite[Theorem II, p. 213]{lw96} with those in the second block-tridiagonal canonical form for real-congruence in Theorem \ref{second_main_th} follows immediately from Theorem \ref{thm.relation2}. For brevity, such correspondence is not explicitly stated.

To describe the canonical form in \cite{lw96}, several structured matrices must be defined. We use the same notation as in \cite{lw96}, although additional restrictions are imposed on some parameters to make the form in \cite{lw96} truly canonical (see Remark \ref{rem.leeweinparam}). We start with the following auxiliary matrices:
\begin{align*}
 &   L_k:=\begin{bmatrix}
        1\\1&1\\&\ddots&\ddots\\&&1&1\\&&&1
    \end{bmatrix}_{(k+1)\times k},\quad L_k^+:=\begin{bmatrix}
        1&-1\\&1&-1\\&&\ddots&\ddots\\&&&1&-1
    \end{bmatrix}_{k\times (k+1)},\\
 &   \Delta_k:=\begin{bmatrix}
        &&1\\&\iddots&\\1&&
    \end{bmatrix}_{k\times k},\quad \Lambda_k:=\begin{bmatrix}
        &&&0\\&&0&1\\&\iddots&1\\0&1&&
    \end{bmatrix}_{k\times k}  \qquad (\Lambda_1 := 0),\\
  &  S\Delta_k:=\left\{\begin{array}{lc}\begin{bmatrix}
        0&\Delta_{k/2}\\-\Delta_{k/2}&0
    \end{bmatrix}_{k\times k} &\mbox{($k$ even)},\\[0.5cm]  \begin{bmatrix}
        0&0&0\\0&0&\Delta_{(k-1)/2}\\0&-\Delta_{(k-1)/2} & 0
    \end{bmatrix}_{k\times k}&\mbox{($k$ odd)}.\end{array}\right.
\end{align*}

The canonical form for real-congruence of real square matrices in \cite{lw96} is a direct sum of the following eight types of blocks:
\begin{align}
   m_3' &:=\begin{bmatrix}
       0&L_k\\L_k^+&0
   \end{bmatrix}_{(2k+1)\times(2k+1)} \qquad (m_3' := 0 \;\; \mbox{if} \;\; k=0),\label{m3}\\
   \infty_4' & :=\varepsilon \, (S\Delta_k+\Lambda_k)\qquad \mbox{($\varepsilon = \pm 1$, $k$ even)},\label{infty4}\\
   \infty_5' & :=\begin{bmatrix}
       0&\Delta_k+\Lambda_k\\-\Delta_k+\Lambda_k&0
   \end{bmatrix}_{2k \times 2k} \qquad \mbox{($k$ odd)},\label{infty5}\\
   o_3' & :=\varepsilon \, (\Delta_k+S\Delta_k)\qquad \mbox{($\varepsilon = \pm 1$, $k$ odd)},\label{o3}\\
   o_4'& :=\begin{bmatrix}
       0&\Delta_k+\Lambda_k\\\Delta_k-\Lambda_k&0
   \end{bmatrix}_{2k \times 2k} \qquad \mbox{($k$ even)},\label{o4}\\
   \alpha_3' & :=\begin{bmatrix}
       0&(\alpha+1)\Delta_k+\Lambda_k\\
       (-\alpha+1)\Delta_k-\Lambda_k&0
   \end{bmatrix}_{2k \times 2k} \qquad (\alpha \in \R, \; \alpha > 0),\label{alpha3}
\end{align}
\begin{align}
   \beta_4'& :=\varepsilon \, \begin{bmatrix}
       &&&R\\&&R&S\\&\iddots&\iddots\\R&S
   \end{bmatrix}_{2k\times2k} \qquad (\varepsilon = \pm 1),\label{beta4}\\
   \beta_5' & :=\left[\begin{array}{cccc|cccc}
        &&&&&&&R'\\&&&&&&R'&S'\\&&&&&\iddots&\iddots\\&&&&R'&S'\\\hline
        &&&-T&&&&\\&&-T&-S'&&&&\\&\iddots&\iddots&&&&\\-T&-S'&&&&&&
   \end{array}\right]_{4k\times4k},\label{beta5}
\end{align}
where
$$
R=\begin{bmatrix}
    1&|b|\\-|b|&1
\end{bmatrix},\quad S=\begin{bmatrix}
    0&1\\-1&0
\end{bmatrix},\quad T=\begin{bmatrix}
   b&a-1\\a-1&-b
\end{bmatrix},\quad R'=\begin{bmatrix}
    b&a+1\\a+1&-b
\end{bmatrix},\quad S'=\begin{bmatrix}
    0&1\\1&0
\end{bmatrix}
$$
with
$$
a,b \in \R \quad \mbox{and} \quad a \ne 0, \, b \ne 0.
$$

Theorem \ref{thm.lw96} is the main result in \cite{lw96}.

\begin{theorem} \label{thm.lw96} {\rm \cite[Theorem II, p. 213]{lw96}}
Each square real matrix is real-congruent to a direct sum, uniquely determined up to permutation of summands, of matrices of types $m_3', \infty_4', \infty_5', o_3', o_4',$ $\alpha_3', \beta_4'$, and $\beta_5'$.
\end{theorem}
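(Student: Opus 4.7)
My plan is to deduce Theorem \ref{thm.lw96} from the canonical form already established in Theorem \ref{main_th}, rather than proving it from scratch. Concretely, for every Lee--Weinberg block $B\in\{m_3',\infty_4',\infty_5',o_3',o_4',\alpha_3',\beta_4',\beta_5'\}$ I will exhibit an explicit real-congruence between $B$ and a direct sum of blocks from Theorem \ref{main_th} of Types (i)--(iv); and conversely I will show that every block in Theorem \ref{main_th} is real-congruent to some Lee--Weinberg block. Once this two-way dictionary is in place, the existence statement of Theorem \ref{thm.lw96} follows immediately from the existence statement in Theorem \ref{main_th}, and the uniqueness statement follows from the uniqueness in Theorem \ref{main_th} together with the fact that the dictionary is a bijection between block types (after taking into account the sign parameters $\varepsilon = \pm 1$).

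The main technical step is to identify, for each Lee--Weinberg block $B$, the real-KCF of the pair $(B^\top, B)$ and to match it with the real-KCF tables computed in Theorem \ref{thm.1strealCFandKCF}. This is greatly simplified because, by Lemma \ref{lemm.strictrealpenc}, real strict equivalence of $(B^\top,B)$ and $(C^\top,C)$ can be checked over $\C$, and real-congruence of two real matrices with the same $^*$congruence class is automatic by Theorem \ref{di_th}. Concretely, for the singular blocks $m_3'$ with $(2k+1)\times(2k+1)$ size, $\det(B^\top + \lambda B)$ is identically zero and inspecting $B$ shows the real-KCF consists of one right and one left singular block of minimal index $k$, which by the table in Theorem \ref{thm.1strealCFandKCF} matches $J_{2k+1}(0)$. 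For $\infty_4'$, $\infty_5'$, $o_3'$, $o_4'$ one computes that $B$ is nonsingular with $B^{-\top}B$ similar to a real Jordan block at eigenvalue $\pm 1$, and this matches the blocks $\Gamma_k\otimes[\pm 1]$ (with the sign parameter $\varepsilon$ exactly accounting for the sign indeterminacy of Theorem \ref{thm.1strealCFandKCF}). For $\alpha_3'$ with $\alpha>0$, direct computation of the real-KCF of $((\alpha_3')^\top,\alpha_3')$ yields $(I_k,J_k(a))\oplus(I_k,J_k(1/a))$ for an appropriate real $a$ with $0<|a|<1$ determined by $\alpha$; this matches $H_{2k}(a)$. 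Finally, $\beta_4'$ and $\beta_5'$, parametrised by $a,b\in\R$ with $b\neq 0$, produce real-KCFs consisting of $2\times 2$ complex-conjugate Jordan blocks $C_{2k}(\cdot,\cdot)$, matching $\Gamma_k\otimes C(a,b)$ and $\widehat H_{4k}(a,b)$ respectively (distinguishing the two cases by whether $a^2+b^2=1$ or not).

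The hard part, and where I expect to spend most of the effort, is to carry out these real-KCF computations cleanly for the Lee--Weinberg blocks as written in \eqref{m3}--\eqref{beta5}, since the blocks are presented in a rather ad hoc form with anti-diagonal matrices $\Delta_k$, $\Lambda_k$, $S\Delta_k$ and the bidiagonal $L_k,L_k^+$. For each block I would bring the pencil $\lambda B^\top + B$ into Kronecker canonical form through a sequence of row/column block operations (e.g.\ exploiting the block antidiagonal structure of $\beta_4'$ and $\beta_5'$ by a reversal permutation to turn them into upper block-triangular pencils). Managing the parity conditions ($k$ even vs.\ odd in $\infty_4'$ vs.\ $\infty_5'$, in $o_3'$ vs.\ $o_4'$) and ensuring that the parameter ranges $\alpha>0$ in $\alpha_3'$, $(a,b)\neq(0,0)$ in $\beta$-blocks, and $\varepsilon=\pm 1$ match exactly the indeterminacies recorded in Theorem \ref{thm.1strealCFandKCF} (namely, sign ambiguity in $\Gamma_k\otimes[\pm 1]$, sign of $a$ in $\Gamma_k\otimes C(a,b)$, and replacement of $a$ by $1/a$, or of $(a,b)$ by $(a,b)/(a^2+b^2)$) is where Remark \ref{rem.leeweinparam} enters; I anticipate having to restrict the original ranges of \cite{lw96} precisely to obtain a bijective correspondence.

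Once the bijection $\Phi$ from Lee--Weinberg blocks (with restricted parameters) to direct sums of blocks of Types (i)--(iv) is established, the proof concludes quickly. \emph{Existence}: given $A\in\R^{n\times n}$, apply Theorem \ref{main_th} to obtain $A\stackrel{r}{\sim} C^r_A$ as a direct sum of Types (i)--(iv); then apply $\Phi^{-1}$ blockwise to rewrite $C^r_A$ as a direct sum of Lee--Weinberg blocks. \emph{Uniqueness}: if $A$ were real-congruent to two such direct sums $D_1$ and $D_2$ of Lee--Weinberg blocks, then $\Phi(D_1)$ and $\Phi(D_2)$ would be two direct sums of Types (i)--(iv) real-congruent to $A$, hence equal up to permutation by the uniqueness part of Theorem \ref{main_th}, and therefore $D_1$ and $D_2$ are also equal up to permutation by injectivity of $\Phi$.
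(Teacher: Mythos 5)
The paper does not prove this theorem at all: it is quoted verbatim from Lee and Weinberg \cite{lw96} (which in turn rests on Thompson \cite{thompson}) and is used as external input, so there is no internal proof to compare yours against. Your strategy --- deriving the Lee--Weinberg form as a corollary of Theorem \ref{main_th} by building a two-way dictionary between block types --- is legitimate and non-circular, since Theorem \ref{main_th} is proved from \cite{hs06} and \cite{di02} without any reference to \cite{lw96}. Moreover, the forward half of your dictionary is exactly what the paper establishes in Theorem \ref{thm.correspondlw96} and the lemmas of Section \ref{sec.proofofcorresp}, by the same real-KCF technique you describe, so that part of your plan is sound and its details are essentially available.

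What your proposal does not supply --- and what the paper never needs, because it takes Theorem \ref{thm.lw96} as given --- is the \emph{bijectivity} of the dictionary, and this is where the remaining work and some genuine obstructions lie. For surjectivity you must check that the parameter maps cover the full canonical ranges of Theorem \ref{main_th}: e.g.\ that $b \mapsto \bigl( \pm|b|/\sqrt{1+b^2},\, 1/\sqrt{1+b^2}\bigr)$, together with the parity split among $\beta_4'$, $\infty_5'$, $o_4'$, reaches every $C(a,b)$ with $a^2+b^2=1$, $b>0$, and that $(a,b)\mapsto \bigl( \tfrac{1-(a^2+b^2)}{(1+|a|)^2+b^2},\, \tfrac{2|b|}{(1+|a|)^2+b^2}\bigr)$ is onto $\{a^2+b^2<1,\ b>0\}$. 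For injectivity --- on which your uniqueness argument rests --- there are two concrete problems. First, the correspondence for $\beta_4'$ is only determined up to the sign of the first parameter (the paper explicitly concedes this after Theorem \ref{thm.correspondlw96}); injectivity can still be salvaged by arguing that $\beta_4'$ and $-\beta_4'$ are not real-congruent and therefore must land on the two distinct blocks $\Gamma_k\otimes C(\pm c, d)$ in some order, but this has to be said. Second, the real-congruence class of $\beta_5'$ depends only on $|a|$ and $|b|$ (the target block in Table \ref{correspondence_table} is invariant under $a\mapsto -a$ and $b \mapsto -b$), so with the parameter range $a\neq 0 \neq b$ as stated the matrices $\beta_5'(a,b)$ and $\beta_5'(-a,b)$ are distinct summand types in the same congruence class, and uniqueness ``up to permutation of summands'' fails literally unless one further restricts to, say, $a>0$ and $b>0$. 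You correctly anticipated that parameter restrictions would be needed, but identifying and justifying exactly these restrictions is substantive content still missing from the proposal.
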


\begin{remark} {\rm (On the values of the parameters $\alpha$ in \eqref{alpha3} and $a, b$ in \eqref{beta4}--\eqref{beta5})} \label{rem.leeweinparam}
In {\rm\cite{lw96}} the parameter $\alpha$ in \eqref{alpha3} is just required to be ``finite and nonzero''. We have imposed $\alpha >0$, because otherwise the block would not be ``truly'' canonical. The reason is that $\alpha_3'$ is real-congruent to the matrix obtained by replacing $\alpha$ by $-\alpha$ in $\alpha_3'$. To see this, set
$S_k := \diag (1, -1, 1, -1, \ldots, (-1)^{k-1} ) \in \R^{k \times k}$ and $\widetilde S_k := \diag ((-1)^{k-1} , \ldots, -1, 1, -1, 1) \in \R^{k \times k}$, i.e., $\widetilde S_k$ has the same diagonal entries as $S_k$ but in reversed order. Then
$$
\begin{bmatrix}
0 & I_k \\
I_k & 0
\end{bmatrix}
\begin{bmatrix}
S_k & 0\\
0 & \widetilde S_k
\end{bmatrix}
\, \alpha_3'
\begin{bmatrix}
S_k & 0\\
0 & \widetilde S_k
\end{bmatrix}
\begin{bmatrix}
0 & I_k \\
I_k & 0
\end{bmatrix}
= \begin{bmatrix}
       0&(-\alpha+1)\Delta_k+\Lambda_k\\
       (\alpha+1)\Delta_k-\Lambda_k&0
   \end{bmatrix}.
$$

In {\rm\cite{lw96}}, the parameters $a$ and $b$ in $\beta_4'$, and $\beta_5'$ are just required to be real. However, {\rm\cite[Theorem II, p. 213]{lw96}} is obtained from {\rm\cite[Theorem 2 (c), p. 344]{thompson}} and this result requires $a\ne 0\ne b$. Therefore, we also have required $a\ne 0\ne b$. In fact, if $a=0$ or $b=0$, then $\beta_4'$ and $\beta_5'$ would not be canonical blocks since they would be real-congruent to direct sums of other canonical blocks in Theorem {\rm\ref{thm.lw96}}. To see this, compare the table in Appendix {\rm\ref{append.degenerate}} with Table {\rm\ref{correspondence_table}}.
\end{remark}

\medskip

Theorem \ref{thm.correspondlw96} is the main theorem of this section.

\begin{theorem} \label{thm.correspondlw96}
Let $m_3', \infty_4', \infty_5', o_3', o_4',$ $\alpha_3', \beta_4',$ and $\beta_5'$ be the matrices in \eqref{m3}--\eqref{beta5}, and $J_k (0)$, $\Gamma_k$, $H_{2k} (a), C(a,b)$, and $\widehat{H}_{4k} (a,b)$ be the matrices defined in \eqref{jordanblock}--\eqref{h4k} and appearing in Theorem {\rm\ref{main_th}}. Then the real congruences described in Table {\rm\ref{correspondence_table}} hold.
\end{theorem}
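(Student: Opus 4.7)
The plan is to exploit the bridge provided by Theorem \ref{thm.1strealCFandKCF}: since both the Lee--Weinberg form of Theorem \ref{thm.lw96} and the canonical form of Theorem \ref{main_th} are canonical under real-congruence, two real matrices are real-congruent if and only if they have the same real-KCF (with the caveat of the sign ambiguities identified in Theorem \ref{thm.1strealCFandKCF}). So, for every Lee--Weinberg block $B$, I would first compute the real-KCF of the pencil $(B^\top, B)$, then read off from the table in Theorem \ref{thm.1strealCFandKCF} which blocks of Theorem \ref{main_th} carry the same real-KCF, and finally resolve the few sign/orientation ambiguities by explicit small real-congruences.

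The computations split into four natural groups according to the nature of the eigenvalues of the $^*$cosquare. \textbf{Singular / zero eigenvalue blocks:} $m_3'$ is block-antidiagonal with blocks $L_k$ and $L_k^+$ of compatible singular shapes, so $((m_3')^\top , m_3')$ strictly reduces to the singular Kronecker pair $(F_k,G_k)\oplus(F_k^\top,G_k^\top)$, matching the odd-size row of $J_k(0)$ in Theorem \ref{thm.1strealCFandKCF}. For $\infty_4'$, $\infty_5'$, $o_3'$, $o_4'$, I would use Lemma \ref{similar_lemma} and straightforward row/column permutations to put $B^{-\top}B$ in Jordan form; the presence of $\Delta_k + \Lambda_k$ and its anti-triangular cousins produces a single Jordan block at $1$ or $-1$, which either matches $J_k(0)$ (infinite eigenvalues) or $\Gamma_k\otimes[\pm 1]$ (Type (ii) at $\mu=\pm1$) after invoking Lemma \ref{lemm.problocksHS}(1). \textbf{Real non-unimodular blocks:} for $\alpha_3'$ I would form the $^*$cosquare $(\alpha_3')^{-\top}\alpha_3'$: its block antidiagonal structure yields similarity to $J_k(a)\oplus J_k(1/a)$ for an appropriate $a=a(\alpha)$, matching $H_{2k}(a)$ via Lemma \ref{lemm.problocksHS}(2). \textbf{Non-real unimodular blocks:} for $\beta_4'$ the cosquare decomposes into $2\times 2$ rotations combined with nilpotent shifts, giving a $C_{2k}(\cdot,\cdot)$-type real Jordan structure matching $\Gamma_k\otimes C(a,b)$ with $a^2+b^2=1$ after identifying the unit modulus parameter coming from $R$. \textbf{Non-real non-unimodular blocks:} for $\beta_5'$ the block-antidiagonal form together with the nontrivial $R'$, $T$ blocks gives a $^*$cosquare similar to $C_{2k}(a,b)\oplus C_{2k}(a/(a^2+b^2),b/(a^2+b^2))$, which is exactly the real-KCF of $\widehat H_{4k}(a,b)$ listed in Theorem \ref{thm.1strealCFandKCF}.

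Once the real-KCF of each Lee--Weinberg block is identified, the correspondence to a block of Theorem \ref{main_th} is determined up to the sign ambiguities listed in Theorem \ref{thm.1strealCFandKCF}: namely the $\pm$ sign in Type (ii) blocks $\Gamma_k\otimes[\pm 1]$ and the sign of $a$ in $\Gamma_k\otimes C(a,b)$. These are exactly the ambiguities visible in $\infty_4'$, $o_3'$, $o_4'$, and $\beta_4'$ through the parameter $\varepsilon$ and the sign patterns attached to the $R$, $S$ and $\Delta_k,\Lambda_k$ blocks. To settle them I would, in each case, exhibit an explicit orthogonal real-congruence transforming the Lee--Weinberg block to the target block of Theorem \ref{main_th}: typically a composition of a signature matrix $\diag(\pm 1,\ldots,\pm 1)$ with a reversal permutation $\Delta_k$ (of the kind already used in Remark \ref{rem.leeweinparam} and in the proof of Lemma \ref{equivalencegammas.th}). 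This verifies directly which value of $\varepsilon$ corresponds to $+1$ and which to $-1$ in the Theorem \ref{main_th} parametrisation.

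The main obstacle is this last bookkeeping step: although the real-KCF computation for each block is a routine but careful manipulation, the sign ambiguity in Theorem \ref{thm.1strealCFandKCF} means that one cannot finish by the real-KCF alone for the Type (ii) blocks. I expect the most delicate case to be $\beta_4'$, where the parameter $b$ enters through $|b|$ in $R$ and a simultaneous sign flip of the first coordinate of $C(a,b)$ must be tracked through the Kronecker factor $\Gamma_k$; here the dependence of the correct sign on $k \pmod 4$ (mirroring Lemma \ref{equivalencegammas.th}) will have to be verified explicitly. The remaining cases ($m_3'$, $\infty_5'$, $\alpha_3'$, $\beta_5'$) are uniquely determined by the real-KCF alone and so require only the pencil computations sketched above.
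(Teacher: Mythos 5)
Your plan is essentially the paper's own proof: the paper likewise computes real-KCF$(B^\top,B)$ for $m_3'$, $\alpha_3'$, $\beta_4'$, $\beta_5'$ and reads off the target block from the table in Theorem \ref{thm.1strealCFandKCF} (using Lemma \ref{lemm.strictrealpenc} to pass through complex strict equivalences), while for $\infty_4'$, $\infty_5'$, $o_3'$, $o_4'$ it goes straight to explicit signature-times-permutation congruences of the kind you describe; note also that the paper deliberately leaves the sign of $a$ in the $\beta_4'$ row undetermined (hence the $\pm$ in Table \ref{correspondence_table}), so your proposed extra step of fixing that sign is not needed to prove the theorem as stated.

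Two caveats. First, your predicted outcomes for the second group are off for $\infty_5'$ and $o_4'$: their cosquares have \emph{two} Jordan blocks $J_k((-1)^k)$, not one, and they match $\Gamma_k\otimes C(0,1)$ (Type (ii) with $N=C(0,1)$, i.e.\ unimodular eigenvalue $\pm\ii$), not $J_k(0)$ nor $\Gamma_k\otimes[\pm 1]$; the method would self-correct once the computation is done, but as written the plan points at the wrong rows of the table. Second, for $\beta_4'$ and $\beta_5'$ the crux is showing that each conjugate eigenvalue of the pencil $\la B^\top + B$ has geometric multiplicity one, so that the real Jordan structure is a single $C_{2k}$-block rather than a sum of smaller ones; the paper needs a dedicated auxiliary result (Lemma \ref{lemm.auxbetas}, on null spaces of block anti-triangular matrices $Z_k(A,B)$) for this, and your plan does not indicate how that step would be carried out.
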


The proof of Theorem \ref{thm.correspondlw96} is postponed to Subsection \ref{sec.proofofcorresp}, where it is obtained as a corollary of a sequence of lemmas, each proving the real-congruence described in one of the rows of Table \ref{correspondence_table}. We hasten to admit that the precise sign of the parameters in two of the blocks in the right column of Table \ref{correspondence_table} has not been determined. These are the blocks corresponding to $\beta_4'$.

\begin{table}[]
    \centering
    {\renewcommand{\arraystretch}{2.4}
    \renewcommand{\tabcolsep}{0.25cm}
    \begin{tabular}{|c|c|}\hline
         Block $B$ in Theorem \ref{thm.lw96} &is real-congruent to block $C$ in Theorem \ref{main_th}  \\\hline\hline
         $m_3'$&$J_{2k+1}(0)$\\[0.1cm] \hline
         $\infty_4'$& $\Gamma_k \otimes (\varepsilon \, (-1)^{\frac{k}{2} + 1}
         )$ \, ($k$ even) \\[0.1cm] \hline
         $\infty_5'$&$\Gamma_k\otimes C(0,1)$ \, ($k$ odd)\\[0.1cm] \hline
         $o_3'$&$\Gamma_k \otimes  (\varepsilon \, (-1)^{\frac{k-1}{2}}
         )$ \, ($k$ odd) \\[0.1cm] \hline
         $o_4'$&$\Gamma_k\otimes C(0,1)$ \, ($k$ even)\\[0.1cm] \hline
         $\alpha_3'$&\begin{tabular}{ll}
              $J_{2k}(0)$  & if $\alpha = 1$  \\[0.2cm]
              $\displaystyle H_{2k}\left(\frac{1-\alpha}{1+\alpha}\right)$ & if $\alpha \neq 1$ \\[0.3cm]
              \end{tabular}
              \\\hline
         $\beta_4'$&\begin{tabular}{ll} $\displaystyle \Gamma_k\otimes C \left(\pm \frac{|b|}{\sqrt{1+b^2}} \, ,\, \frac{1}{\sqrt{1+b^2}} \right)$ & if $k$ is even
         \\[0.3cm]
         $\displaystyle \Gamma_k\otimes C \left(\pm \frac{1}{\sqrt{1+b^2}}  \, ,\, \frac{|b|}{\sqrt{1+b^2}}  \right)$
         &if $k$ is odd \\[0.3cm] \end{tabular}\\[0.2cm] \hline
         $\beta_5'$&
              $\displaystyle \widehat H_{4k}\left(\frac{1-(a^2+b^2)}{(1+|a|)^2+b^2}\, ,\, \frac{2|b|}{(1+|a|)^2+b^2}\right)$
         \\[0.3cm] \hline
    \end{tabular}}
    \caption{Real-congruences between the canonical blocks in Theorem \ref{thm.lw96} and those in Theorem \ref{main_th}. The sizes of the blocks  in Theorem \ref{thm.lw96} are those indicated in \eqref{m3}--\eqref{beta5}. The parameter $\alpha$ in $\alpha_3'$ satisfies $\alpha
    >0$ and $a, b$ in $\beta_4'$ and $\beta_5'$ satisfy $a\ne 0\ne b$.}
    \label{correspondence_table}
\end{table}

We see in Table \ref{correspondence_table} that the difference in the number of distinct canonical blocks in the canonical forms of Theorems \ref{main_th} and \ref{thm.lw96} comes from the blocks $\infty_4', \infty_5',o_3', o_4',$ and $\beta_4'$, which are gathered in the two variants $\Gamma_k \otimes [\pm 1]$ and $\Gamma_k \otimes C(a,b)$ of the blocks of Type (ii) in Theorem \ref{main_th}.

\subsection{Proof of Theorem \ref{thm.correspondlw96}} \label{sec.proofofcorresp}
In addition to performing direct real-congruences, we will often use the following approach for proving the results in this section: given a block $B$ in the left column of Table \ref{correspondence_table}, we will compute the real-KCF$(B^\top,B)$ and we will make use of the table in Theorem \ref{thm.1strealCFandKCF} to determine the block in Theorem \ref{main_th} to which $B$ is real-congruent. Recall that Lemma \ref{lemm.strictrealpenc} allows us to compute real-KCFs by means of intermediate complex strict equivalences.

\begin{lemma} The matrix $m_3'$ in Table {\rm\ref{correspondence_table}} is real-congruent to $J_{2k+1}(0)$.
\end{lemma}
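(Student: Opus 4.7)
My plan is to compute $\mathrm{real\text{-}KCF}((m_3')^\top, m_3')$ and then invoke the uniqueness part of Theorem \ref{thm.1strealCFandKCF}. This is the strategy announced just before the lemma.

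First, the anti-block structure of $m_3'$ yields
$$\lambda (m_3')^\top + m_3' \;=\; \begin{bmatrix} 0 & L_k + \lambda (L_k^+)^\top \\[2pt] L_k^+ + \lambda L_k^\top & 0 \end{bmatrix},$$
where the $(1,2)$-block is $(k+1)\times k$ and the $(2,1)$-block is $k\times(k+1)$. A suitable permutation of the block columns exhibits this pencil as strictly equivalent to the direct sum of pairs
$$\bigl((L_k^+)^\top,\, L_k\bigr) \;\oplus\; \bigl(L_k^\top,\, L_k^+\bigr).$$

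Next I would verify that each summand is a singular Kronecker block. The pencil $L_k+\lambda (L_k^+)^\top$ is bidiagonal with main diagonal $1+\lambda$ and subdiagonal $1-\lambda$, so it has full column rank for every finite $\lambda$; moreover, its leading coefficient $(L_k^+)^\top$ also has full column rank, ruling out an infinite eigenvalue. A $(k+1)\times k$ pencil with no right null vectors (finite or at infinity) has no regular part and no right singular blocks, and a row-excess count then forces a single left minimal-index block of size $(k+1)\times k$, namely $(F_k^\top, G_k^\top)$. The analogous row-based argument for the $k\times (k+1)$ pencil $L_k^+ + \lambda L_k^\top$ yields $(L_k^\top, L_k^+) \approx (F_k, G_k)$. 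Lemma \ref{lemm.strictrealpenc} upgrades these strict equivalences to strict real-equivalences, giving
$$\mathrm{real\text{-}KCF}\bigl((m_3')^\top, m_3'\bigr) \;=\; (F_k, G_k) \oplus (F_k^\top, G_k^\top).$$

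Finally, the table in Theorem \ref{thm.1strealCFandKCF} identifies this real-KCF with that of a single Type (i) block $J_{2\ell+1}(0)$ with $\ell = k$, i.e., $J_{2k+1}(0)$. Singular canonical pairs of the form $(F_\ell,G_\ell)$ or $(F_\ell^\top,G_\ell^\top)$ arise only from Type (i) blocks in Theorem \ref{main_th}, and the uniqueness statement of Theorem \ref{thm.1strealCFandKCF} has no sign indeterminacy for Type (i) blocks. Hence the canonical form of $m_3'$ must be exactly $J_{2k+1}(0)$, proving the claim. The only delicate step is the rank bookkeeping in the middle paragraph, and it is routine since $1+\lambda$ and $1-\lambda$ never vanish simultaneously.
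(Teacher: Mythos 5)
Your proposal is correct and follows essentially the same route as the paper: split $\lambda(m_3')^\top+m_3'$ into the direct sum $((L_k^+)^\top,L_k)\oplus(L_k^\top,L_k^+)$, show each summand is a single singular Kronecker block, and read off the answer from the first row of the table in Theorem \ref{thm.1strealCFandKCF}. The only (immaterial) difference is that you identify the minimal index by a direct row/column count where the paper invokes the Index Sum Theorem; both arguments are valid.
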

\begin{proof}
To see this, we compute the real-KCF of the pair $((m_3')^\top , m_3')$:
\begin{equation*}
((m_3')^\top , m_3')  = \left( \begin{bmatrix}
 0 & (L_k^+)^\top \\
 L_k^\top & 0
\end{bmatrix} \, , \,
\begin{bmatrix}
 0 & L_k \\
 L_k^+ & 0
\end{bmatrix}
\right)  \stackrel{r}{\approx} ((L_k^+)^\top, L_k) \oplus (L_k^\top , L_k^+) .
\end{equation*}
Thus, real-KCF$((m_3')^\top , m_3') = \mbox{real-KCF}((L_k^+)^\top \, , \, L_k) \oplus \mbox{real-KCF}(L_k^\top \,  , \, L_k^+)$. We first compute $\mbox{real-KCF}(L_k^\top \,  , \, L_k^+)$. For this purpose, we consider the pencil in the variable $\lambda$
$$
\lambda L_k^\top + L_k^+    =\left[\begin{array}{cccc}
         1+\la&\la-1&&\\
         &\ddots&\ddots&\\
         &&1+\la&\la-1
    \end{array}\right]_{(k+1)\times k}.
    $$
    This pencil has normal rank $k$ and has no finite or infinite eigenvalues, since the rank is $k$ when $\lambda$ is replaced by any number and the rank of $L_k^\top$ is also $k$. Moreover, it has no left minimal indices and has only one right minimal index. By the Index Sum Theorem \cite[Theorem 6.5]{indexsum}, this right minimal index must be $k$. Therefore, $\mbox{real-KCF}(L_k^\top \,  , \, L_k^+) = (F_k,G_k)$. A similar argument proves that $\mbox{real-KCF}((L_k^+)^\top \, , \, L_k) = (F_k^\top,G_k^\top)$. Therefore, real-KCF$((m_3')^\top , m_3') = (F_k,G_k) \oplus (F_k^\top,G_k^\top)$ and the result follows from the first row in the table of Theorem \ref{thm.1strealCFandKCF}.
\end{proof}

\begin{lemma} The matrix $\infty_4'$ in Table {\rm\ref{correspondence_table}} is real-congruent to $\Gamma_k \otimes \left( \varepsilon \, (-1)^{\frac{k}{2} + 1} \right)$.
\end{lemma}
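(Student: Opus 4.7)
The plan is to mirror the strategy of the preceding lemma for $m_3'$: first identify $\mathrm{real\text{-}KCF}((\infty_4')^\top,\infty_4')$ and use the table in Theorem~\ref{thm.1strealCFandKCF} to pin the block down to $\Gamma_k\otimes[\pm 1]$, and then fix the residual sign by a finer real-congruence invariant.

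Since $k$ is even, $S\Delta_k$ is skew-symmetric and $\Lambda_k$ is symmetric, so
\[
\lambda\,(\infty_4')^\top+\infty_4' \;=\; \varepsilon\bigl[(1-\lambda)\,S\Delta_k+(1+\lambda)\,\Lambda_k\bigr].
\]
One checks that $(\infty_4')^\top$ is nonsingular (so the pair is regular with no infinite eigenvalues and no singular blocks), that substituting $\lambda=-1$ yields the invertible matrix $2\varepsilon\,S\Delta_k$ (so $-1$ is not an eigenvalue), and that substituting $\lambda=1$ yields $2\varepsilon\,\Lambda_k$ of rank exactly $k-1$ (so $1$ is an eigenvalue of geometric multiplicity~$1$). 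A short calculation---most transparently, showing that the real cosquare $(\infty_4')^{-\top}\infty_4'$ is similar to a single Jordan block $J_k(-1)$---confirms that $\lambda=1$ has full algebraic multiplicity $k$. Hence
\[
\mathrm{real\text{-}KCF}\bigl((\infty_4')^\top,\infty_4'\bigr)\;=\;(I_k,\,J_k((-1)^{k+1})),
\]
which by the second row of the table in Theorem~\ref{thm.1strealCFandKCF} is precisely the real-KCF of a Type~(ii) block $\Gamma_k\otimes[\pm 1]$. The uniqueness part of Theorem~\ref{main_th} then gives $\infty_4'$ real-congruent to $\Gamma_k\otimes[\eta]$ for a unique $\eta\in\{+1,-1\}$.

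To identify $\eta$, I would use the signature of the symmetric part, which is a real-congruence invariant. Skew-symmetry of $S\Delta_k$ gives $\infty_4'+(\infty_4')^\top=2\varepsilon\,\Lambda_k$, whose nonzero $(k-1)\times(k-1)$ antidiagonal submatrix consists of ones; the standard antipodal pairing decomposes it into $(k/2-1)$ blocks $\bigl[\begin{smallmatrix}0&1\\1&0\end{smallmatrix}\bigr]$ (each contributing one positive and one negative eigenvalue) together with a central $+1$, so $\mathrm{sig}(\Lambda_k)=(k/2,\,k/2-1,\,1)$. The same computation for $\Gamma_k+\Gamma_k^\top$, using the explicit entries of $\Gamma_k$ from \eqref{gamma}, shows that for $k$ even the antidiagonal contributions cancel pairwise, while the super-antidiagonal contributes values $2(-1)^i$ at $(i,k+2-i)$ for $i=2,\dots,k$; the central entry of the resulting symmetric antidiagonal $(k-1)$-block is $2(-1)^{k/2+1}$, so $\mathrm{sig}(\Gamma_k+\Gamma_k^\top)$ equals $(k/2,k/2-1,1)$ when $k\equiv 2\pmod 4$ and $(k/2-1,k/2,1)$ when $k\equiv 0\pmod 4$. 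Matching $\mathrm{sig}(2\varepsilon\,\Lambda_k)$ with $\mathrm{sig}(\eta(\Gamma_k+\Gamma_k^\top))$ in each residue class modulo~$4$ forces $\eta=\varepsilon(-1)^{k/2+1}$, which is exactly the sign claimed in Table~\ref{correspondence_table}.

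The main obstacle is the sign bookkeeping in this last step: the exponent $(-1)^{k/2+1}$ is precisely the sign of the central entry of $\Gamma_k+\Gamma_k^\top$ under the antipodal pairing, and tracking it across the parity of $k/2$ is what produces the claimed formula. As a sanity check, $k=2$ gives $\eta=\varepsilon$ (with the explicit real congruence $\diag(1,-1)$ sending $S\Delta_2+\Lambda_2$ to $\Gamma_2$) and $k=4$ gives $\eta=-\varepsilon$ (with $\diag(1,-1,1,1)$ sending $S\Delta_4+\Lambda_4$ to $-\Gamma_4$), both consistent with $\varepsilon(-1)^{k/2+1}$; this also suggests an alternative, more hands-on route via an explicit signed-permutation real congruence in the general case.
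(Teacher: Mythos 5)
Your proof is correct, but it takes a genuinely different route from the paper's. The paper simply writes out $\infty_4'$ explicitly as a two-by-two anti-triangular block array and exhibits a diagonal signature matrix $S=\diag((-1)^{k/2},(-1)^{k/2+1},\ldots)_{k/2\times k/2}\oplus I_{k/2}$ with $S\,\infty_4'\,S^\top=\varepsilon(-1)^{\frac{k}{2}+1}\Gamma_k$ --- a one-line explicit congruence of exactly the ``hands-on'' kind you mention at the end. You instead compute real-KCF$((\infty_4')^\top,\infty_4')=(I_k,J_k((-1)^{k+1}))$ and invoke Theorem \ref{thm.1strealCFandKCF}, which by design only pins the block down to $\Gamma_k\otimes[\pm1]$ (this is precisely the sign indeterminacy that theorem highlights), and then you resolve the residual sign with the inertia of the symmetric part $\infty_4'+(\infty_4')^\top=2\varepsilon\Lambda_k$ versus $\eta(\Gamma_k+\Gamma_k^\top)$. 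I checked your signature bookkeeping: for $k$ even the anti-diagonal contributions to $\Gamma_k+\Gamma_k^\top$ indeed cancel, the central sub-anti-diagonal entry is $2(-1)^{k/2+1}$, and the two candidate inertias $(k/2,k/2-1,1)$ and $(k/2-1,k/2,1)$ are distinct, so $\eta$ is forced and agrees with the table. The one step you wave at --- that $\lambda=1$ has algebraic multiplicity $k$ --- is genuinely short (the pencil $(1-\lambda)S\Delta_k+(1+\lambda)\Lambda_k$ is anti-triangular with anti-diagonal entries $\pm(1-\lambda)$, so its determinant is $\pm(1-\lambda)^k$), but you should say so rather than merely assert the cosquare is similar to $J_k(-1)$. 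What each approach buys: the paper's is shorter and fully constructive; yours is more systematic, matches the KCF-based strategy used for $m_3'$, $\alpha_3'$, $\beta_4'$, and $\beta_5'$, and the symmetric-part-signature invariant is a nice device for breaking exactly the $\pm$ ambiguity that the real-KCF cannot see (the paper itself concedes it did not determine the analogous signs for $\beta_4'$).
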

\begin{proof}
Taking into account that $k$ is even note that
 $$
 \infty_4'=\varepsilon \left[\begin{array}{cccc|cccc}
      &&&&&&&1 \\&&&&&&1&1\\&&&&&\iddots&\iddots&\\&&&&1&1&&\\\hline
      &&&-1&1&&&\\&&-1&1&&&&\\&\iddots&\iddots&&&&&\\
      -1&1&&&&&&
 \end{array}\right]_{k\times k}.
 $$
 Now, if we set $S:=\diag((-1)^{k/2},(-1)^{k/2+1},(-1)^{k/2},(-1)^{k/2+1},\hdots)_{k/2\times k/2}\oplus I_{k/2}$, then $S\infty_4' S^\top =\varepsilon \, (-1)^{\frac{k}{2} + 1} \Gamma_k$ (namely, we just change the sign of the first $k/2$ rows and columns with odd or even indices, depending on the parity of $k/2$).
 \end{proof}
\begin{lemma}
The matrix $\infty_5'$ in Table {\rm\ref{correspondence_table}} is real-congruent to $\Gamma_k\otimes C(0,1)$.
\end{lemma}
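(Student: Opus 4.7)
The plan is to follow the same real-KCF strategy used in the preceding lemmas of this section: compute $\text{real-KCF}((\infty_5')^\top, \infty_5')$ and identify the corresponding block of Theorem~\ref{main_th} via the table in Theorem~\ref{thm.1strealCFandKCF}. Since $a=0$ is invariant under the sign ambiguity of $a$ allowed by Theorem~\ref{thm.1strealCFandKCF}, no further direct congruence will be needed.

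First, I would exploit the block anti-diagonal structure of $\infty_5'$. Both $\Delta_k$ and $\Lambda_k$ are symmetric, so $(\infty_5')^\top = \left[\begin{smallmatrix} 0 & -\Delta_k+\Lambda_k \\ \Delta_k+\Lambda_k & 0\end{smallmatrix}\right]$, and the pencil $\lambda(\infty_5')^\top + \infty_5'$ is block anti-diagonal. A block column swap (multiplication on the right by $\left[\begin{smallmatrix} 0 & I_k \\ I_k & 0\end{smallmatrix}\right]$, which is an invertible real matrix) gives the strict real-equivalence
\begin{equation*}
\lambda(\infty_5')^\top + \infty_5' \stackrel{r}{\approx} M_1(\lambda) \oplus M_2(\lambda),
\end{equation*}
with $M_1(\lambda) := (1-\lambda)\Delta_k + (1+\lambda)\Lambda_k$ and $M_2(\lambda) := (\lambda-1)\Delta_k + (\lambda+1)\Lambda_k$. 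Next, I would analyze each $M_i(\lambda)$: its leading coefficient is $\mp\Delta_k + \Lambda_k$, which is anti-triangular with $\pm 1$ on the main anti-diagonal and therefore invertible, so there are no infinite eigenvalues. An elementary determinant computation (expanding along the zero first row of $\Lambda_k$) yields $\det M_i(\lambda) = \pm(\lambda-1)^k$, so the only eigenvalue is $\lambda = 1$ with algebraic multiplicity $k$. At $\lambda = 1$ one has $M_i(1) = \pm 2\Lambda_k$, which has rank $k-1$, giving geometric multiplicity $1$ and hence a single Jordan block of size $k$. Thus each $M_i$ has KCF $(I_k, J_k(-1))$ and consequently
\begin{equation*}
\text{real-KCF}((\infty_5')^\top, \infty_5') = (I_k, J_k(-1)) \oplus (I_k, J_k(-1)).
\end{equation*}

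Finally, I would match this to the table in Theorem~\ref{thm.1strealCFandKCF}. For $k$ odd, the entry for $\Gamma_k \otimes C(0,1)$ produces exactly $(I_k, J_k((-1)^k)) \oplus (I_k, J_k((-1)^k)) = (I_k, J_k(-1)) \oplus (I_k, J_k(-1))$. The obstacle I expect is a careful check that no other direct sum of blocks from Theorem~\ref{main_th} can reproduce this real-KCF: blocks of Type (i) yield singular pairs or Jordan blocks at $0$; blocks $H_{2k}(a)$ with $0<|a|<1$ and blocks $\widehat H_{4k}(a,b)$ yield eigenvalues outside $\{-1\}$ or $C_{2k}$ blocks; and two separate blocks $\Gamma_{k'} \otimes [\pm 1]$ of matching total size $2k$ would force $k'=k$, but then $(-1)^{k'+1}=1 \ne -1$ since $k$ is odd, ruling this out. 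By the uniqueness in Theorem~\ref{thm.1strealCFandKCF} (with the sign ambiguity in the parameter $a$ vacuous at $a=0$), we conclude that $\infty_5'$ is real-congruent to $\Gamma_k \otimes C(0,1)$.
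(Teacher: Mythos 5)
Your proof is correct, but it takes a genuinely different route from the paper's. The paper proves this lemma by exhibiting an explicit real congruence: with $S:=\diag(1,-1,1,\dots,-1,1)_{k\times k}\oplus I_k$ (recall $k$ is odd) one checks directly that $S\infty_5'S^\top = C(0,1)\otimes \Gamma_k$, and then Lemma \ref{lemm.commukron} converts this to $\Gamma_k\otimes C(0,1)$. You instead follow the real-KCF strategy that the paper reserves for the blocks $m_3'$, $\alpha_3'$, $\beta_4'$, and $\beta_5'$: splitting the block anti-diagonal pencil into $M_1(\lambda)\oplus M_2(\lambda)$, noting each $M_i$ is anti-triangular with invertible leading coefficient and determinant $\pm(\lambda-1)^k$, computing the geometric multiplicity via $M_i(1)=\pm 2\Lambda_k$ of rank $k-1$, and concluding real-KCF$((\infty_5')^\top,\infty_5')=(I_k,J_k(-1))\oplus(I_k,J_k(-1))$, which for $k$ odd matches the entry for $\Gamma_k\otimes C(0,1)$ in the table of Theorem \ref{thm.1strealCFandKCF}. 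All the steps check out, including the sign convention (a root $\lambda=1$ of the pencil determinant corresponds to the Jordan block $J_k(-1)$ in the pair notation $(I_k,J_k(\mu))$) and the observation that the residual ambiguity in Theorem \ref{thm.1strealCFandKCF} — the sign of the parameter $a$ in $\Gamma_k\otimes C(a,b)$ — is vacuous at $a=0$; your explicit exclusion of competing block combinations is not strictly needed given the uniqueness statement of that theorem, but it does no harm. The trade-off: the paper's direct congruence is shorter, self-contained, and produces the transforming matrix explicitly, whereas your argument is more mechanical and would generalize to situations where guessing the sign-change matrix is harder — at the cost of being unable, in general, to resolve the sign of $a$ (which happens not to matter here).
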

\begin{proof}
Note that
 $$
 \infty_5'=\left[\begin{array}{cccc|cccc}
      &&&&&&&1 \\&&&&&&1&1\\&&&&&\iddots&\iddots&\\&&&&1&1&&\\\hline
      &&&-1&&&&\\&&-1&1&&&&\\&\iddots&\iddots&&&&&\\
      -1&1&&&&&&
 \end{array}\right]_{2k\times 2k}.
 $$
Taking into account that $k$ is odd define $S:=\diag(1,-1,1,-1, \hdots, 1,-1,1)_{k\times k}\oplus I_k$. Then
$$
S\infty_5'S^\top =\left[\begin{array}{cccccc|ccccc}
      &&&&&&&&&&1 \\&&&&&&&&&-1&-1\\&&&&&&&&\iddots&\iddots&\\&&&&&&&-1&-1\\&&&&&&1&1&&\\\hline
      &&&&&-1&&&&\\&&&&1&1&&&&\\&&&-1&-1&&&&&\\&&\iddots&\iddots&&&&&\\
      &1&1&&&&&\\-1&-1&&&&&&&&&
 \end{array}\right]_{2k\times 2k} = C(0,1) \otimes \Gamma_k ,
$$
and the result follows from Lemma \ref{lemm.commukron}.
\end{proof}

\begin{lemma}
The matrix $o_3'$ in Table {\rm\ref{correspondence_table}} is real-congruent to $\Gamma_k \otimes \left( \varepsilon \, (-1)^{\frac{k-1}{2}}\right)$.
\end{lemma}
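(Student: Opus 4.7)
The plan is to follow exactly the recipe used in the preceding lemmas for $\infty_4'$ and $\infty_5'$: write $o_3' = \varepsilon(\Delta_k + S\Delta_k)$ out explicitly for $k$ odd, and produce a real diagonal signature matrix that conjugates it to the required scalar multiple of $\Gamma_k$. Unpacking the block description of $S\Delta_k$, one checks directly that $\Delta_k + S\Delta_k$ has nonzero entries only on the antidiagonal $(i, k+1-i)$, where every entry equals $+1$, and on the diagonal $(i, k+2-i)$ just above it, where the entries are $+1$ for $2 \le i \le (k+1)/2$ and $-1$ for $(k+3)/2 \le i \le k$. The same two diagonals of $\Gamma_k$ carry the entries $(-1)^{k+i}$; everywhere else both matrices are zero.

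Guided by the small cases $k=3,5,7$, I would take the diagonal signature
\begin{equation*}
R := \diag\!\bigl((-1)^{(k-1)/2},\; (-1)^{(k+1)/2},\; (-1)^{(k-1)/2},\; \ldots,\; (-1)^{k-1}\bigr)_{(k+1)/2 \times (k+1)/2} \,\oplus\, I_{(k-1)/2},
\end{equation*}
i.e., the first $(k+1)/2$ diagonal entries of $R$ alternate in sign starting from $(-1)^{(k-1)/2}$, while the remaining $(k-1)/2$ entries are all $+1$. A direct case-by-case verification on the only two nonzero diagonals of $o_3'$ then gives
\begin{equation*}
R\, o_3'\, R^\top \;=\; \varepsilon\, (-1)^{(k-1)/2}\, \Gamma_k \;=\; \Gamma_k \otimes \bigl(\varepsilon\, (-1)^{(k-1)/2}\bigr),
\end{equation*}
which, since $R$ is real and orthogonal, is the claimed real congruence.

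The only mildly delicate point is on the adjacent diagonal $(i, k+2-i)$, where one must match $r_i r_{k+2-i}$ against the product of the sign carried by $\Delta_k + S\Delta_k$ (which flips at $i = (k+1)/2$, reflecting the $\pm \Delta_{(k-1)/2}$ blocks) and the sign $(-1)^{k+i}$ appearing in $\Gamma_k$. The alternating/constant split built into $R$ is tailored precisely so that this matching works out for all $i$; the check on the antidiagonal $(i, k+1-i)$ is then automatic, since the entries of $\Delta_k + S\Delta_k$ there are all equal to $+1$ and the sign $r_i r_{k+1-i}$ inherits the desired factor $(-1)^{(k-1)/2}(-1)^{k+i}$ from the definition of $R$.
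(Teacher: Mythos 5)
Your proposal is correct and follows essentially the same route as the paper: an explicit description of the two nonzero diagonals of $o_3'$ followed by a diagonal signature congruence. In fact your matrix $R$ coincides entry-by-entry with the matrix $S$ used in the paper's proof, since the last term $(-1)^{k-1}$ of your alternating block equals $+1$ for $k$ odd, so the only difference is where the boundary between the alternating part and the identity part is drawn.
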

\begin{proof}
Note that
 $$
 o_3'=\varepsilon \left[\begin{array}{ccccc|ccccc}
      & & & & & & & &  1 \\&&&&&&&1&1\\&&&&&&\iddots&\iddots\\&&&&&1&1&&\\&&&&1&1\\\hline
      &&&1&-1&&&\\&&1&-1&&&&\\&\iddots&\iddots&&&&&\\
      1&-1&&&&&&
 \end{array}\right]_{k\times k}.
 $$
 Taking into account that $k$ is odd, if $$S:=\diag((-1)^{(k-1)/2},(-1)^{(k+1)/2},(-1)^{(k-1)/2},(-1)^{(k+1)/2},\hdots)_{\frac{k-1}{2}\times\frac{k-1}{2}}\oplus I_{\frac{k+1}{2}},$$ then $So_3'S^\top=\varepsilon \, (-1)^{\frac{k-1}{2}}  \Gamma_k$.
\end{proof}
\begin{lemma}
The matrix $o_4'$ in Table {\rm\ref{correspondence_table}} is real-congruent to $\Gamma_k\otimes C(0,1)$.
\end{lemma}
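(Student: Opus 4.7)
The plan is to adapt the preceding proof of the $\infty_5'$ case almost verbatim: I will construct a diagonal signature matrix $S \in \R^{2k \times 2k}$ such that $S \, o_4' \, S^\top = C(0,1) \otimes \Gamma_k$, and then invoke Lemma \ref{lemm.commukron} to conclude that $o_4'$ is real-congruent to $\Gamma_k \otimes C(0,1)$.

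Since $o_4'$ is block anti-diagonal with $k \times k$ blocks $\Delta_k + \Lambda_k$ (top-right) and $\Delta_k - \Lambda_k$ (bottom-left), and the target $C(0,1) \otimes \Gamma_k = \left[\begin{smallmatrix} 0 & \Gamma_k \\ -\Gamma_k & 0 \end{smallmatrix}\right]$ has the same block structure, I would look for $S$ of the form $S_1 \oplus I_k$ with $S_1$ diagonal. The required congruence then splits into the two scalar matrix identities
\begin{equation*}
S_1 \, (\Delta_k + \Lambda_k) = \Gamma_k \quad \text{and} \quad (\Delta_k - \Lambda_k) \, S_1 = - \Gamma_k.
\end{equation*}
Inspecting the sign pattern of $\Gamma_k$, one notices that in every row $i$ the anti-diagonal entry at position $(i, k{+}1{-}i)$ and (when present) the anti-superdiagonal entry at $(i, k{+}2{-}i)$ both equal $(-1)^{i+k}$ times the corresponding unit entries of $\Delta_k + \Lambda_k$. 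This suggests the unified choice
\begin{equation*}
S_1 := \diag\bigl((-1)^{1+k}, (-1)^{2+k}, \ldots, (-1)^{2k}\bigr),
\end{equation*}
which, for the present case of $k$ even, specializes to $S_1 = \diag(-1, 1, -1, 1, \ldots, -1, 1)$. The same formula, specialized to $k$ odd, recovers the diagonal matrix used above in the proof for $\infty_5'$.

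With this $S_1$ in hand, both identities reduce to routine entrywise checks using the explicit descriptions of $\Delta_k$, $\Lambda_k$, and $\Gamma_k$; the evenness of $k$ enters precisely in the second identity, where one needs $(-1)^k = 1$ so that the row-sign $(-1)^{i+k}$ of $-\Gamma_k$ can be produced by right-multiplication by $S_1$ (which contributes a column-sign). Once these identities are verified, Lemma \ref{lemm.commukron} provides a permutation matrix $P(2,k)$ with $C(0,1) \otimes \Gamma_k = P(2,k) \, (\Gamma_k \otimes C(0,1)) \, P(2,k)^\top$, and composing the two real-congruences yields $o_4'$ real-congruent to $\Gamma_k \otimes C(0,1)$, as desired. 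The only mildly non-routine step is guessing the correct $S_1$; everything else is direct computation.
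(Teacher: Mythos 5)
Your proposal is correct and follows essentially the same route as the paper: the paper also takes $S=\diag(-1,1,\ldots,-1,1)_{k\times k}\oplus I_k$ (which is exactly your $S_1\oplus I_k$ for $k$ even), verifies $S\,o_4'\,S^\top=C(0,1)\otimes\Gamma_k$ by direct computation, and concludes via Lemma \ref{lemm.commukron}. Your splitting of the congruence into the two identities $S_1(\Delta_k+\Lambda_k)=\Gamma_k$ and $(\Delta_k-\Lambda_k)S_1=-\Gamma_k$, and your identification of where the parity of $k$ enters, are both accurate.
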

\begin{proof}
 Note that
 $$
 o_4'=\left[\begin{array}{cccc|cccc}
      &&&&&&&1 \\&&&&&&1&1\\&&&&&\iddots&\iddots&\\&&&&1&1&&\\\hline
      &&&1&&&&\\&&1&-1&&&&\\&\iddots&\iddots&&&&&\\
      1&-1&&&&&&
 \end{array}\right]_{2k\times 2k}.
 $$
Proceeding as for $\infty_5'$ but with $k$ even, if $S:=\diag(-1,1,\hdots,-1,1)_{k\times k}\oplus I_k$, then
$$
So_4'S^\top =\left[\begin{array}{cccccc|cccccc}
      &&&&&&&&&&&-1 \\&&&&&&&&&&1&1\\&&&&&&&&&\iddots&\iddots&\\&&&&&&&&-1&-1\\&&&&&&&1&1&&\\\hline
      &&&&&1&&\\&&&&-1&-1&&&&\\&&&1&1&&&&&\\&&\iddots&\iddots&&&&&\\
      &1&1&&&&&&\\-1&-1&&&&&&&&&&
 \end{array}\right]_{2k\times 2k} = C(0,1) \otimes \Gamma_k ,
$$
and the result follows from Lemma \ref{lemm.commukron}.
\end{proof}

\begin{lemma}
The matrix $\alpha_3'$ in Table {\rm\ref{correspondence_table}} is real-congruent to $J_{2k} (0)$ if $\alpha = 1$ and to $H_{2k}\left(\frac{1-\alpha}{1+\alpha}\right)$ if $\alpha \ne 1$.
\end{lemma}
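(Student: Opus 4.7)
The plan is to compute the real-KCF of the pair $((\alpha_3')^\top, \alpha_3')$ and identify the corresponding block in Theorem \ref{main_th} via Theorem \ref{thm.1strealCFandKCF}. Writing $\alpha_3' = \left[\begin{smallmatrix} 0 & X \\ Y & 0 \end{smallmatrix}\right]$ with $X := (\alpha+1)\Delta_k + \Lambda_k$ and $Y := (1-\alpha)\Delta_k - \Lambda_k$, and noting that both $X$ and $Y$ are symmetric (since $\Delta_k$ and $\Lambda_k$ are), we have $(\alpha_3')^\top = \left[\begin{smallmatrix} 0 & Y \\ X & 0 \end{smallmatrix}\right]$. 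A block-row swap then yields
$$((\alpha_3')^\top, \alpha_3') \; \stackrel{r}{\approx} \; (X,Y) \oplus (Y,X).$$

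For KCF$(X,Y)$, a short index computation gives $\Delta_k \Lambda_k = J_k(0)$ (the standard $k\times k$ nilpotent with $1$s on the superdiagonal). Multiplying both entries of the pair on the left by $\Delta_k$ and using $\Delta_k^2 = I_k$, we obtain $(X,Y) \approx (J_k(\alpha+1), -J_k(\alpha - 1))$. Since $\alpha > 0$, $J_k(\alpha+1)$ is invertible, so the pair is further equivalent to $(I_k, -J_k(\alpha+1)^{-1} J_k(\alpha-1))$. Expanding via Neumann series produces
$$-J_k(\alpha+1)^{-1} J_k(\alpha-1) \; = \; \mu\, I_k - \tfrac{2}{(\alpha+1)^2}\, J_k(0) + (\text{higher powers of } J_k(0)),$$
with $\mu := \tfrac{1-\alpha}{1+\alpha}$. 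Because the coefficient of $J_k(0)$ is nonzero, the nilpotent part has rank $k-1$, so it has a single Jordan block and $-J_k(\alpha+1)^{-1}J_k(\alpha-1)$ is similar to $J_k(\mu)$. Therefore KCF$(X,Y) = (I_k, J_k(\mu))$ and, by pencil reversal, KCF$(Y,X) = (J_k(\mu), I_k)$; the latter is strictly equivalent to $(I_k, J_k(1/\mu))$ when $\mu \neq 0$, using that $J_k(\mu)^{-1}$ is similar to $J_k(1/\mu)$.

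If $\alpha = 1$, then $\mu = 0$ and real-KCF$((\alpha_3')^\top, \alpha_3') = (I_k, J_k(0)) \oplus (J_k(0), I_k)$, which by the first row of the table in Theorem \ref{thm.1strealCFandKCF} equals real-KCF$(J_{2k}(0)^\top, J_{2k}(0))$; since Type (i) blocks in Theorem \ref{main_th} carry no sign ambiguity, $\alpha_3'$ is real-congruent to $J_{2k}(0)$. If $\alpha \neq 1$, then $0 < |\mu| < 1$ (because $\alpha > 0$), and real-KCF$((\alpha_3')^\top, \alpha_3') = (I_k, J_k(\mu)) \oplus (I_k, J_k(1/\mu))$, which by the fourth row of the same table equals real-KCF$(H_{2k}(\mu)^\top, H_{2k}(\mu))$; since Type (iii) blocks also carry no sign ambiguity, $\alpha_3'$ is real-congruent to $H_{2k}\!\left(\tfrac{1-\alpha}{1+\alpha}\right)$. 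The only substantive step is the Jordan-form identification of $-J_k(\alpha+1)^{-1}J_k(\alpha-1)$ above, which reduces to the non-vanishing of $\tfrac{2}{(\alpha+1)^2}$.
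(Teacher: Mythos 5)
Your proposal is correct and follows essentially the same route as the paper: both reduce $((\alpha_3')^\top,\alpha_3')$ to $(I_k,C)\oplus(C,I_k)$ with $C=-J_k(\alpha+1)^{-1}J_k(\alpha-1)$, show $C$ has the single Jordan block $J_k\bigl(\tfrac{1-\alpha}{1+\alpha}\bigr)$ by a rank-$(k-1)$ argument, and conclude via the table in Theorem \ref{thm.1strealCFandKCF}. The only (cosmetic) difference is that you verify the geometric multiplicity via a Neumann-series expansion of $C$, while the paper computes $\rank\bigl((1+\alpha)J_k(\alpha-1)+(1-\alpha)J_k(\alpha+1)\bigr)=k-1$ directly.
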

\begin{proof} We prove this result by computing the real-KCF$( (\alpha_3')^\top , \alpha_3' )$. To this purpose, note first that $\Delta_k ( (\alpha + 1) \Delta_k + \Lambda_k) = J_k (\alpha + 1)$ and
$\Delta_k ( (-\alpha + 1) \Delta_k - \Lambda_k) = -J_k (\alpha - 1)$. Taking into account that $ J_k (\alpha + 1)$ is invertible (because $\alpha >0$), we proceed as follows:
\begin{align*}
( (\alpha_3')^\top , \alpha_3' ) & =
\left(
\begin{bmatrix}
0 & (-\alpha + 1) \Delta_k - \Lambda_k \\
(\alpha + 1) \Delta_k + \Lambda_k & 0
\end{bmatrix} \, , \,
\begin{bmatrix}
0 & (\alpha + 1) \Delta_k + \Lambda_k \\
(-\alpha + 1) \Delta_k - \Lambda_k & 0
\end{bmatrix}
\right) \\
& \approx
\left(
\begin{bmatrix}
0 & -J_k(\alpha - 1)  \\
J_k(\alpha + 1) & 0
\end{bmatrix} \, , \,
\begin{bmatrix}
0 & J_k(\alpha + 1) \\
- J_k(\alpha - 1) & 0
\end{bmatrix}
\right) \\
& \approx
\left(
\begin{bmatrix}
-J_k(\alpha + 1)^{-1}J_k(\alpha - 1) & 0  \\
0 & I_k
\end{bmatrix} \, , \,
\begin{bmatrix}
I_k & 0 \\
0 & - J_k(\alpha + 1)^{-1} J_k(\alpha - 1)
\end{bmatrix}
\right).
\end{align*}
The Jordan canonical form of the matrix $C = - J_k(\alpha + 1)^{-1} J_k(\alpha - 1)$ is $J_k \left(\frac{1-\alpha}{1 + \alpha}\right)$, because $C$ is upper triangular with all diagonal entries equal to $\frac{1-\alpha}{1 + \alpha}$. Hence, $C$ has only one eigenvalue equal to $\frac{1-\alpha}{1 + \alpha}$ with algebraic multiplicity $k$. Moreover, its geometric multiplicity is $1$ because
\begin{align*}
\rank \left(C - \frac{1-\alpha}{1 + \alpha} I_k\right) & = \rank \left(J_k (\alpha - 1) + \frac{1-\alpha}{1 + \alpha} J_k (\alpha+1)\right) \\
& = \rank \left((1+\alpha) J_k (\alpha - 1) + (1-\alpha) J_k (\alpha +1)\right) \\
& = \rank \begin{bmatrix}
0 & 2 & & \\ &\ddots & \ddots & \\&& \ddots & 2 \\ &&& 0
\end{bmatrix} = k-1.
\end{align*}
Thus,
$$
( (\alpha_3')^\top , \alpha_3' ) = \left( J_k \left(\frac{1-\alpha}{1 + \alpha}\right) \,  , \, I_k \right) \oplus
\left( I_k \, , \, J_k \left(\frac{1-\alpha}{1 + \alpha}\right) \right).
$$
The result follows from the table in Theorem \ref{thm.1strealCFandKCF}. Namely, from its first row for $\alpha =1$ and from its third row for $\alpha \ne 1$.
\end{proof}

We will need Lemma \ref{lemm.auxbetas} in the proofs of Lemmas \ref{lemm.beta4} and \ref{lemm.beta5}. In the statement of Lemma \ref{lemm.auxbetas}, $\mathcal{N} (A)$ and $\mathcal{C} (A)$ denote the null and column space of a matrix $A$, respectively.

\begin{lemma} \label{lemm.auxbetas} Let $A, B \in \C^{n\times n}$, with $B$ invertible, $k\geq 2$ be an integer, and
$$
Z_k (A,B) :=
\begin{bmatrix}
&&&& A \\
&&& \iddots &B \\
&&\iddots & \iddots & \\
&A&B& & \\
A & B& &&
\end{bmatrix} \in \C^{nk \times nk}.
$$
Then:
\begin{enumerate}
    \item[\rm 1.] $\dim \mathcal{N} \, (Z_k (A,B)) = \dim \mathcal{N} \,(A)$ if and only if $$\mathcal{N} \, (Z_k (A,B)) = \{ [x_1^\top, 0 , \ldots , 0]^\top \in \C^{nk} \, : \, x_1 \in   \mathcal{N} \,(A)\}.$$
     \item[\rm 2.] $\dim \mathcal{N} \, (Z_k (A,B)) = \dim \mathcal{N} \,(A)$ if and only if $\mathcal{N} \,(A) \cap \mathcal{C} \,(B^{-1} A) = \{0\}$.
\end{enumerate}
\end{lemma}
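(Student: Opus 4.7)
The plan is to parameterize $\ker Z_k(A,B)$ explicitly, using the invertibility of $B$ to reduce everything to a single kernel computation for $M := B^{-1}A$. Writing $x = [x_1^\top, \ldots, x_k^\top]^\top \in \C^{nk}$ with $x_j \in \C^n$, the block rows of $Z_k(A,B) x = 0$ read $A x_k = 0$ together with $A x_{j-1} + B x_j = 0$ for $j = 2, \ldots, k$. Since $B$ is invertible, these recurrences immediately yield $x_j = (-M)^{j-1} x_1$, and the remaining condition $A x_k = 0$ becomes $A(-M)^{k-1} x_1 = 0$; using $A = BM$ and the invertibility of $B$, this is just $x_1 \in \ker M^k$. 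Note also that $\ker A = \ker M$, again because $B$ is invertible. The upshot is a linear isomorphism
\[
\Phi: \ker M^k \longrightarrow \ker Z_k(A,B), \qquad x_1 \longmapsto \bigl[x_1^\top, (-Mx_1)^\top, \ldots, ((-M)^{k-1} x_1)^\top\bigr]^\top,
\]
which restricts to a bijection between $\ker M \subseteq \ker M^k$ and the subspace $\{[x_1^\top, 0, \ldots, 0]^\top : x_1 \in \ker A\} \subseteq \ker Z_k(A,B)$.

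Part 1 then follows by a dimension comparison through $\Phi$: the set $\{[x_1^\top, 0, \ldots, 0]^\top : x_1 \in \ker A\}$ is always a subspace of $\ker Z_k(A,B)$ of dimension exactly $\dim \ker A = \dim \ker M$, so the two subspaces coincide if and only if $\dim \ker Z_k(A,B) = \dim \ker A$.

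For part 2, the same isomorphism $\Phi$ reduces the statement to: $\dim \ker M^k = \dim \ker M$ if and only if $\ker M \cap \mathcal{C}(M) = \{0\}$. Here I would invoke the standard fact that the ascending chain $\ker M \subseteq \ker M^2 \subseteq \cdots$ stabilizes as soon as two consecutive terms coincide, so $\ker M^k = \ker M$ is equivalent to $\ker M^2 = \ker M$. The final equivalence $\ker M^2 = \ker M \iff \ker M \cap \mathcal{C}(M) = \{0\}$ is elementary: if $y = Mz \in \ker M$, then $z \in \ker M^2 = \ker M$ and hence $y = Mz = 0$; conversely, any $x$ with $M^2 x = 0$ satisfies $Mx \in \ker M \cap \mathcal{C}(M) = \{0\}$. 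Since $\ker A = \ker M$ and $\mathcal{C}(B^{-1}A) = \mathcal{C}(M)$, this gives exactly the claim. I do not foresee a real obstacle; the only point requiring a sentence of justification is the stabilization of the ascending chain of kernels, which is routine (if $\ker M^{j} = \ker M^{j+1}$ and $M^{j+2} x = 0$, then $Mx \in \ker M^{j+1} = \ker M^j$, so $x \in \ker M^{j+1}$).
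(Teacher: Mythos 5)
Your proof is correct. Part 1 is argued exactly as in the paper: the block rows of $Z_k(A,B)x=0$ give $Ax_k=0$ and $x_j=-B^{-1}Ax_{j-1}$ for $j=2,\dots,k$, the set $\{[x_1^\top,0,\dots,0]^\top : x_1\in\mathcal{N}(A)\}$ is always contained in $\mathcal{N}(Z_k(A,B))$ with dimension $\dim\mathcal{N}(A)$, and equality of dimensions forces equality of subspaces. For part 2 your route differs from the paper's in a worthwhile way: you push the parameterization all the way to an isomorphism $\mathcal{N}(Z_k(A,B))\cong\mathcal{N}(M^k)$ with $M=B^{-1}A$ (using $A=BM$ and the invertibility of $B$ to convert $A(-M)^{k-1}x_1=0$ into $M^kx_1=0$), and then invoke the standard stabilization of the ascending chain $\mathcal{N}(M)\subseteq\mathcal{N}(M^2)\subseteq\cdots$ to reduce everything to the elementary equivalence $\mathcal{N}(M^2)=\mathcal{N}(M)\iff\mathcal{N}(M)\cap\mathcal{C}(M)=\{0\}$. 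The paper instead argues directly on the block components: in one direction it exhibits the explicit kernel vector $[x^\top,y^\top,0,\dots,0]^\top$ for $y\in\mathcal{N}(A)\cap\mathcal{C}(B^{-1}A)$ and appeals to part 1; in the other it runs a downward induction $x_k=0$, then $x_{k-1}=0$, and so on. Your version is more conceptual and isolates the single-matrix statement about $M$ that is really at stake, at the cost of a sentence justifying chain stabilization; the paper's version is self-contained and never names $\mathcal{N}(M^k)$. Both are complete; the only point to keep explicit in your write-up is the surjectivity of $\Phi$, i.e., that $Ax_k=0$ together with the recurrence is equivalent to $x_1\in\mathcal{N}(M^k)$, which you have.
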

\begin{proof} It is clear that the following inclusion and equalities hold
\begin{equation} \label{eq.auxnulscols}
\{ [x_1^\top, 0 , \ldots , 0]^\top \in \C^{nk} \, : \, x_1 \in   \mathcal{N} \,(A)\} \subseteq  \mathcal{N} \, (Z_k (A,B)),
\end{equation}
\begin{equation} \label{eq.auxnulscols1}
\dim \{ [x_1^\top, 0 , \ldots , 0]^\top \in \C^{nk} \, : \, x_1 \in   \mathcal{N} \,(A)\} = \dim \mathcal{N} (A),
\end{equation}
\begin{equation} \label{eq.auxnulscols2}
\mathcal{N} \, (Z_k (A,B)) = \left\{
\begin{bmatrix}
x_1 \\ x_2 \\ \vdots \\ x_{k-1} \\ x_k
\end{bmatrix} \, : \, x_i \in \C^n \, \mbox{and} \,
\begin{array}{ll}
     A x_k & =0 ,  \\
     x_k & = - B^{-1} A \, x_{k-1} , \\
     \phantom{Ax} \vdots & \phantom{=} \vdots \\
     x_3 & = - B^{-1} A \, x_2  , \\
     x_2 & =  - B^{-1} A \, x_1 .
\end{array}
\right\} .
\end{equation}

\medskip
\noindent Proof of part 1. If $\dim \mathcal{N} \, (Z_k (A,B)) = \dim \mathcal{N} \,(A)$, then \eqref{eq.auxnulscols} and \eqref{eq.auxnulscols1} together imply that $\{ [x_1^\top, 0 , \ldots , 0]^\top \in \C^{nk} \, : \, x_1 \in   \mathcal{N} \,(A)\} = \mathcal{N} \, (Z_k (A,B))$. The converse follows from \eqref{eq.auxnulscols1}.

\medskip
\noindent Proof of part 2. Assume first that $\dim \mathcal{N} \, (Z_k (A,B)) = \dim \mathcal{N} \,(A)$. If $y \in \mathcal{N} \,(A) \cap \mathcal{C} \,(B^{-1} A)$, then $Ay=0$ and $y = - B^{-1} A \, x$ for some $x\in \C^n$, which implies $[x^\top , y^\top, 0, \ldots , 0]^\top \in \mathcal{N} \, (Z_k (A,B))$ by \eqref{eq.auxnulscols2}. This, in turn, implies $y=0$, by part 1, and $\mathcal{N} \,(A) \cap \mathcal{C} \,(B^{-1} A) = \{ 0 \}$.

Conversely, assume $\mathcal{N} \,(A) \cap \mathcal{C} \,(B^{-1} A) = \{ 0 \}$. If $[x_1^\top , x_2^\top , \ldots , x_k^\top]^\top \in \mathcal{N} \, (Z_k (A,B))$, where $x_i \in \C^n$, then \eqref{eq.auxnulscols2} implies $x_k \in \mathcal{N} \,(A) \cap \mathcal{C} \,(B^{-1} A)$. So $x_k=0$ and again from \eqref{eq.auxnulscols2}, $x_{k-1} \in \mathcal{N} \,(A) \cap \mathcal{C} \,(B^{-1} A)$, which yields $x_{k-1} =0$. The remaining equalities in \eqref{eq.auxnulscols2} give $x_2 = x_3 = \cdots = x_k = 0$ and $Ax_1 =0$, i.e., $\mathcal{N} \, (Z_k (A,B)) = \{ [x_1^\top, 0 , \ldots , 0]^\top \in \C^{nk} \, : \, x_1 \in   \mathcal{N} \,(A)\}.$ The result follows from part 1.
\end{proof}

\begin{lemma} \label{lemm.beta4}
The matrix $\beta_4'$ in Table {\rm\ref{correspondence_table}} is real-congruent to
\begin{align*} \displaystyle \Gamma_k\otimes C \left(\pm \frac{|b|}{\sqrt{1+b^2}} \, ,\, \frac{1}{\sqrt{1+b^2}} \right) & \qquad \mbox{if $k$ is even,}
         \\[0.2cm]
\displaystyle \Gamma_k\otimes C \left(\pm \frac{1}{\sqrt{1+b^2}}  \, ,\, \frac{|b|}{\sqrt{1+b^2}}  \right) &  \qquad \mbox{if $k$ is odd.}
\end{align*}
\end{lemma}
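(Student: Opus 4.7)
The plan is to compute ${\rm real-KCF}((\beta_4')^\top, \beta_4')$ and then invoke the third row of the table in Theorem \ref{thm.1strealCFandKCF} to identify the unique block of the form $\Gamma_k \otimes C(\alpha, \beta)$ with $\alpha^2 + \beta^2 = 1$, $\beta > 0$, having the same real-KCF. Theorem \ref{thm.1strealCFandKCF} determines the sign of $\alpha$ only up to ambiguity, and changing $\varepsilon$ replaces $\beta_4'$ by its negative (which, via the permutation in Step 1 of the proof of Theorem \ref{main_th}, corresponds to flipping the sign of $\alpha$), so the $\pm$ in the statement absorbs both effects and I will suppress $\varepsilon$ throughout the argument.

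The key structural observation is that $R = I_2 + |b| S$, so $R, R^\top, S, S^\top$ all lie in the commutative algebra $\R[S]$, and $\beta_4' = \varepsilon(\Delta_k \otimes R + \Lambda_k \otimes S)$. Let $W \in \C^{2\times 2}$ be a unitary diagonalizing $S$ as $W S W^* = \diag(-\ii, \ii)$. Applying $(I_k \otimes W)(\cdot)(I_k \otimes W^*)$ to each factor of the pair $((\beta_4')^\top, \beta_4')$ and then conjugating by the Kronecker commutation permutation of Lemma \ref{lemm.commukron} gives
$$
((\beta_4')^\top, \beta_4') \; \approx \; (\bar A_-, A_-) \oplus (A_-, \bar A_-),
$$
where $A_- := (1-\ii|b|) \Delta_k - \ii \Lambda_k$; that is, the original pencil splits into two conjugate $k \times k$ complex sub-pencils.

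For each subpencil, I would factor $A_- = \Delta_k[(1-\ii|b|)I_k - \ii N]$ and $\bar A_- = \Delta_k[(1+\ii|b|)I_k + \ii N]$ with $N := \Delta_k \Lambda_k = J_k(0)$, so that $\bar A_-^{-1} A_- = f(N)$ for the rational function $f(t) = ((1-\ii|b|) - \ii t)/((1+\ii|b|) + \ii t)$. A direct calculation gives $f(0) = \tfrac{1-b^2}{1+b^2} - \ii\, \tfrac{2|b|}{1+b^2}$ with $|f(0)|=1$ and $f'(0) \ne 0$, so $f(N)$ is similar to $J_k(f(0))$ and $(\bar A_-, A_-) \approx (I_k, J_k(f(0)))$. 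Analogously $(A_-, \bar A_-) \approx (I_k, J_k(\overline{f(0)}))$. Combining the two via Lemma \ref{cab_lemma} and promoting the resulting complex strict equivalence to a real strict equivalence via Lemma \ref{lemm.strictrealpenc} yields
$$
{\rm real-KCF}((\beta_4')^\top, \beta_4') = (I_{2k}, C_{2k}(c,d)), \qquad c = \tfrac{1-b^2}{1+b^2}, \quad d = \tfrac{2|b|}{1+b^2}.
$$

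The final step is to match this against the real-KCF $(I_{2k}, C_{2k}((-1)^{k+1}(\alpha^2 - \beta^2), 2|\alpha\beta|))$ of $\Gamma_k \otimes C(\alpha, \beta)$ provided by Theorem \ref{thm.1strealCFandKCF}. Solving the system $(-1)^{k+1}(\alpha^2 - \beta^2) = c$, $2|\alpha\beta| = d$, $\alpha^2 + \beta^2 = 1$, $\beta > 0$ in the two parity cases yields exactly the two alternatives stated in the lemma. The main potential obstacle is bookkeeping: the parity factor $(-1)^{k+1}$, the sign of $\alpha$, and the interplay between the two complex sub-pencils under the diagonalization and Kronecker-commutation steps must all be tracked carefully, but no individual step is deep once the initial splitting via the commutative structure of $\R[S]$ has been performed.
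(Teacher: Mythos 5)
Your proposal is correct and reaches exactly the real-KCF $(I_{2k}, C_{2k}(\tfrac{1-b^2}{1+b^2},\tfrac{2|b|}{1+b^2}))$ that the paper obtains, after which both arguments conclude identically via the third row of the table in Theorem \ref{thm.1strealCFandKCF} and Remark \ref{rem.recovereal} (with the same unavoidable $\pm$ ambiguity in the first parameter). The difference lies in how the Jordan structure of the pencil is determined. The paper works directly with the $2k\times 2k$ pencil $\lambda(\beta_4')^\top+\beta_4'$: it reads off the two conjugate eigenvalues from $\det(\lambda R^\top+R)$, each of algebraic multiplicity $k$, and then establishes geometric multiplicity $1$ by a null-space analysis of the block anti-triangular structure, encapsulated in the auxiliary Lemma \ref{lemm.auxbetas} (which is reused for $\beta_5'$). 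You instead exploit that $R=I_2+|b|S$ places all $2\times 2$ blocks in the commutative algebra $\R[S]$, simultaneously diagonalize $S$ to split the pair into two conjugate $k\times k$ subpencils $(\bar A_-,A_-)\oplus(A_-,\bar A_-)$, and compute the cosquare $\bar A_-^{-1}A_-=f(N)$ with $N=\Delta_k\Lambda_k=J_k(0)$ nilpotent; the condition $f'(0)\ne 0$ then forces a single Jordan block $J_k(f(0))$. Your route bypasses Lemma \ref{lemm.auxbetas} entirely and makes the size-$k$ Jordan block structurally transparent, at the cost of the extra Kronecker bookkeeping (diagonalization of $S$, commutation permutation, and tracking the transpose through $R^\top=I_2-|b|S$, $S^\top=-S$), all of which you handle correctly; the paper's null-space lemma is less elegant here but pays for itself by serving the harder $\beta_5'$ case as well.
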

\begin{proof}  Since $\beta_4'$ in \eqref{beta4} only depends on $|b|$, we assume throughout the proof that $b =|b| >0$. The proof proceeds by computing the real-KCF of the pair $( \left( \beta_4' \right)^\top ,  \beta_4')$ and applying Theorem \ref{thm.1strealCFandKCF}. For this purpose, we consider the pencil
\begin{equation} \label{eq.pencilbeta4}
\lambda \left( \beta_4' \right)^\top +  \beta_4' = \varepsilon
\begin{bmatrix}
       &&&\la R^\top +R\\&&\la R^\top + R& (\la -1) S^\top\\&\iddots&\iddots\\\la R^\top + R & (\la -1) S^\top
   \end{bmatrix}_{2k\times2k}
\end{equation}
in the variable $\lambda$. Since $R$ is invertible for any $b>0$, the pencil $\la \left( \beta_4' \right)^\top +  \beta_4'$ has no minimal indices nor eigenvalues at infinity. So, its KCF is determined by the Jordan blocks associated to its finite eigenvalues. The finite eigenvalues of $\la \left( \beta_4' \right)^\top +  \beta_4'$ are the two roots of the polynomial
$$
\det (\la R^\top + R) = (\la +1)^2+b^2(\la-1)^2=(\la + 1 +\ii b(\la -1))(\la + 1-\ii b(\la - 1))
$$
each with algebraic multiplicity $k$. These roots are
\[
\lambda_1 = \frac{b+\ii }{b-\ii} = \frac{b^2 - 1 + \ii \, 2 b}{1+b^2} \quad \mbox{and} \quad \lambda_2 = \overline{\la_1},
\]
which are conjugate and different from each other, because $b>0$, and $|\lambda_1| = |\lambda_2| =1$. Since $\beta'_4$ is real, the Jordan blocks associated to $\la_1$ and $\la_2$ in KCF$( \left( \beta_4' \right)^\top ,  \beta_4')$ are paired-up. Then, it suffices to determine the Jordan blocks associated to $\lambda_1$. For this, we prove that the geometric multiplicity of $\la_1$ is $1$, i.e., $\dim \, \mathcal{N} (\lambda_1 \left( \beta_4' \right)^\top +  \beta_4') =1$, via Lemma \ref{lemm.auxbetas}-part 2 with
$$A = \la_1 R^\top + R = \frac{2 b(b+\ii)}{1+b^2} \begin{bmatrix}
1 & -\ii \\ \ii & 1
\end{bmatrix}
\qquad \mbox{and} \qquad B = (\la_1 -1) S^\top
= \frac{2 (\ii b -1)}{1+b^2} \begin{bmatrix}
0 & -1 \\ 1 & 0
\end{bmatrix}.$$ The matrix $B$ is nonsingular,
\[
B^{-1} A = b \begin{bmatrix}
1 & -\ii \\ \ii & 1
\end{bmatrix} \qquad \mbox{and} \qquad \mathcal{C} (B^{-1} A) = \mbox{Span} \left\{ \begin{bmatrix}
1 \\ \ii
\end{bmatrix} \right\}.
\]
Therefore, $\mathcal{N} \,(A) \cap \mathcal{C} \,(B^{-1} A) = \{0\}$ and $\dim \, \mathcal{N} (\lambda_1 \left( \beta_4' \right)^\top +  \beta_4') = \dim \mathcal{N} (\la_1 R^\top + R) = 1$, by Lemma \ref{lemm.auxbetas}-part 2. As a consequence, the  KCF$( \left( \beta_4' \right)^\top ,  \beta_4')$ has only one Jordan block $J_k (-\la_1)$ associated with $\la_1$. This implies
\begin{align*}
( \left( \beta_4' \right)^\top ,  \beta_4') & \approx (I_k , J_k (-\la_1)) \oplus  (I_k , J_k (-\overline{\la_1}))  \approx \left(I_{2k} \, , \, C_{2k} \left(\frac{1-b^2}{1+b^2} \, , \, \frac{2 b}{1+b^2} \right)\right),
\end{align*}
where the last strict equivalence follows from Lemma \ref{cab_lemma}. Lemma \ref{lemm.strictrealpenc} guarantees that the expression above is real-KCF$( \left( \beta_4' \right)^\top ,  \beta_4')$. Finally, the result in the statement follows from the third case in the second row of the table in Theorem \ref{thm.1strealCFandKCF} (recall also Remark \ref{rem.recovereal}).
\end{proof}

\begin{lemma} \label{lemm.beta5}
The matrix $\beta_5'$ in Table {\rm\ref{correspondence_table}} is real-congruent to
$$
\displaystyle \widehat H_{4k}\left(\frac{1-(a^2+b^2)}{(1+|a|)^2+b^2}\, ,\, \frac{2|b|}{(1+|a|)^2+b^2}\right).
$$
\end{lemma}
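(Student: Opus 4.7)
The plan is to mirror the strategy of Lemma~\ref{lemm.beta4}: I will compute the real-KCF of $((\beta_5')^\top, \beta_5')$, match it against the known real-KCF of $(\widehat H_{4k}(a',b')^\top, \widehat H_{4k}(a',b'))$ given in the fifth row of the table in Theorem~\ref{thm.1strealCFandKCF}, and then invoke that theorem. Since Type~(iv) blocks are uniquely determined by the real-KCF (no sign indeterminacy), the match will pin down $\beta_5'$ to exactly one block $\widehat H_{4k}(a',b')$.

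First I will write $\beta_5' = \left[\begin{smallmatrix} 0 & Z_1 \\ Z_2 & 0 \end{smallmatrix}\right]$, where $Z_1 := Z_k(R',S')$ and $Z_2 := Z_k(-T,-S')$ are the $2k \times 2k$ matrices in the notation of Lemma~\ref{lemm.auxbetas}. Because $R'$, $S'$, $T$ are symmetric, so are $Z_1$ and $Z_2$, and a block-column swap gives the strict equivalence
\begin{equation*}
((\beta_5')^\top, \beta_5') \approx (Z_2, Z_1) \oplus (Z_1, Z_2).
\end{equation*}
Hence it suffices to compute the KCF of each summand.

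For $(Z_2, Z_1)$ the pencil is $\lambda Z_2 + Z_1 = Z_k(R' - \lambda T,(1-\lambda)S')$. A reversal of the block-column order turns $Z_k(X,Y)$ into a block lower-triangular matrix with $X$ on the diagonal, so $\det Z_k(X,Y) = \pm (\det X)^k$. Thus $(Z_2, Z_1)$ has no infinite eigenvalues nor minimal indices (because $\det T = -(a-1)^2 - b^2 \ne 0$), and its finite eigenvalues are the two roots $\lambda_\pm$ of $\det(R' - \lambda T) = -[b^2(1-\lambda)^2 + ((a+1) - \lambda(a-1))^2]$, each with algebraic multiplicity $k$. Factoring this quadratic over $\C$ with $\mu := a + \ii b$ yields $\lambda_+ = (\overline\mu + 1)/(\overline\mu - 1)$ and $\lambda_- = \overline{\lambda_+}$. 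To show each $\lambda_\pm$ produces a single Jordan block of size $k$, I will apply Lemma~\ref{lemm.auxbetas}(2) with $A := R' - \lambda_\pm T$ (rank $1$, since $\det A = 0$ but $A \ne 0$) and $B := (1 - \lambda_\pm) S'$ (invertible, since $\lambda_\pm \ne 1$). A direct computation exploiting $\det A = 0$ will reveal that $\mathcal C(B^{-1}A) = \mathcal C(A) = \mathcal N(A)$ is the complex line $\mathrm{span}\{(1,\mp \ii)^\top\}$, so the required disjointness $\mathcal N(A) \cap \mathcal C(B^{-1}A) = \{0\}$ reduces to the elementary observation that $S'(1,\ii)^\top = (\ii,1)^\top$ is not a scalar multiple of $(1,\ii)^\top$. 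This gives $\mathrm{KCF}(Z_2, Z_1) = (I_k, J_k(-\lambda_+)) \oplus (I_k, J_k(-\lambda_-))$, and swapping the roles of $Z_1, Z_2$ yields $\mathrm{KCF}(Z_1, Z_2) = (I_k, J_k(-1/\lambda_+)) \oplus (I_k, J_k(-1/\lambda_-))$.

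Finally, I will combine the complex-conjugate pairs of Jordan blocks into $C_{2k}(\cdot,\cdot)$ blocks via Lemma~\ref{cab_lemma} and pass to real-KCFs via Lemma~\ref{lemm.strictrealpenc}. What remains is the algebraic identity $1/\mu' = (1+\overline\mu)/(1-\overline\mu) = -\lambda_+$, with $\mu' := a' + \ii b'$, which combined with the table in Theorem~\ref{thm.1strealCFandKCF} identifies the two real-KCFs. This identity will follow from the factorization $1 + \mu - \overline\mu - \mu\overline\mu = (1+\mu)(1-\overline\mu)$ together with a short case split on the sign of $a$ (since $|a|$ enters the denominators of $a'$ and $b'$); the non-real and off-unit-circle conditions on $-\lambda_\pm$ (consistent with Type~(iv)) follow automatically from $a, b \ne 0$. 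I expect the main obstacle to be the geometric-multiplicity step: although the rank-and-transversality analysis with the $2 \times 2$ blocks $R'$, $S'$, $T$ is elementary, it carries the substance of the lemma, whereas the concluding algebraic identification is routine once the right factorization is in place.
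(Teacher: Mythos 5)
Your proposal is correct and follows essentially the same route as the paper's proof: the decomposition $((\beta_5')^\top,\beta_5')\approx (Y,X)\oplus (X,Y)$, the analysis of the pencil $\lambda Y+X$ with eigenvalues the roots of $\det(R'-\lambda T)$, the geometric-multiplicity-one argument via Lemma~\ref{lemm.auxbetas}(2), and the final identification through Lemma~\ref{cab_lemma}, Lemma~\ref{lemm.strictrealpenc}, and the last row of the table in Theorem~\ref{thm.1strealCFandKCF}. One small slip to fix: the literal claim $\mathcal{C}(B^{-1}A)=\mathcal{C}(A)=\mathcal{N}(A)$ would make the required intersection nontrivial; what you mean (and what your next clause correctly verifies) is $\mathcal{C}(A)=\mathcal{N}(A)=\mathrm{span}\{(1,\mp\ii)^\top\}$ while $\mathcal{C}(B^{-1}A)=S'\,\mathcal{C}(A)$ is a \emph{different} line.
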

\begin{proof} The proof proceeds again by first computing the real-KCF of $(\left( \beta_5'\right)^\top , \beta_5')$ and, then, by applying Theorem \ref{thm.1strealCFandKCF}. For simplicity, let us express $\beta_5'$ in \eqref{beta5} as
\begin{eqnarray} \label{eq.xybeta5}
\beta_5' = \begin{bmatrix}
0 & X \\
Y & 0
\end{bmatrix}, \qquad X, Y \in \R^{2k \times 2k}.
\end{eqnarray}
Observe that $X$ and $Y$ are symmetric and nonsingular, since $b\ne 0$. Thus,
\begin{align*}
( \left( \beta_5'\right)^\top , \beta_5') & = \left(  \begin{bmatrix}
0 & Y \\
X & 0
\end{bmatrix},  \begin{bmatrix}
0 & X \\
Y & 0
\end{bmatrix}\right)  \approx \left(  \begin{bmatrix}
Y & 0 \\
0 & X
\end{bmatrix},  \begin{bmatrix}
X & 0 \\
0 & Y
\end{bmatrix}\right) \\ & \approx \left(  \begin{bmatrix}
I_{2k} & 0 \\
0 & I_{2k}
\end{bmatrix},  \begin{bmatrix}
Y^{-1} X & 0 \\
0 &  (Y^{-1} X)^{-1}
\end{bmatrix}\right).
\end{align*}
Thus
\begin{equation} \label{eq.beta5oplus1}
\mbox{KCF} (\left( \beta_5'\right)^\top , \beta_5') = \mbox{KCF} (Y,X) \oplus \mbox{KCF} (X,Y),
\end{equation}
and $\mbox{KCF} (X,Y)$ can be obtained from $\mbox{KCF} (Y,X)$ changing each involved Jordan block $J_\ell (\mu)$ by $J_\ell (1/\mu)$. Therefore, we focus on computing $\mbox{KCF} (Y,X)$.
For this purpose, we follow an approach similar to that in Lemma \ref{lemm.beta4} for computing KCF$(\left( \beta_4'\right)^\top , \beta_4' )$. We consider the pencil
\begin{equation} \label{eq.pencilYX}
 \la Y + X =   \begin{bmatrix}
    &&&R'-\la T\\&&R'-\la T&(1-\la)S'\\&\iddots&\iddots\\R'-\la T&(1-\la)S'
\end{bmatrix}
\end{equation}
in the variable $\la$. This pencil has no minimal indices nor eigenvalues at infinity because $R'$ and $T$ are invertible. This implies that its KCF is determined by the Jordan blocks associated to its finite eigenvalues, which are the roots of the polynomial
\begin{align*}
\det (R'-\la T) & = - (\, b^2 (\la-1)^2 + (\la (1-a) + (a+1))^2 \, ) \\ & = - (\, \la (1-a) + (a+1) + \ii b(\la -1)) \, \, (\la (1-a) + (a+1) - \ii b(\la -1)  ),
\end{align*}
each with algebraic multiplicity $k$. These roots are
$$
\la_1 = \frac{a+1 - \ii b}{a-1 - \ii b} = \frac{a^2 + b^2 -1 + \ii \, 2 b}{(a-1)^2 + b^2} \qquad \mbox{and} \qquad \la_2 = \overline{\la_1}
$$
and are complex conjugate to each other, because $b\ne 0$, and $|\la_1| = |\la_2| \ne 1$, because, $a\ne 0$. Since $Y$ and $X$ are real, the Jordan blocks associated to $\la_1$ and $\la_2$ in KCF$( Y ,  X)$ are paired-up. So, it suffices to determine the Jordan blocks associated to $\lambda_1$. For this, we prove that the geometric multiplicity of $\la_1$ is $1$, i.e., $\dim \, \mathcal{N} (\lambda_1 Y +  X) =1$, via Lemma \ref{lemm.auxbetas}-part 2 with
$$A = R' -\la_1 T = -\frac{2 b}{a-1-\ii b} \begin{bmatrix}
1 & \ii \\ \ii & -1
\end{bmatrix}
\qquad \mbox{and} \qquad B = (1-\la_1) S'.$$ The matrix $B$ is nonsingular,
\[
B^{-1} A = -b \begin{bmatrix}
-\ii & 1 \\ -1 & -\ii
\end{bmatrix} \qquad \mbox{and} \qquad \mathcal{C} (B^{-1} A) = \mbox{Span} \left\{ \begin{bmatrix}
1 \\ -\ii
\end{bmatrix} \right\}.
\]
Therefore, $\mathcal{N} \,(A) \cap \mathcal{C} \,(B^{-1} A) = \{0\}$ and $\dim \, \mathcal{N} (\lambda_1 Y +  X) = \dim \mathcal{N} (R' - \la_1 T) = 1$, by Lemma \ref{lemm.auxbetas}-part 2. As a consequence, the  KCF$(Y,  X)$ has only one Jordan block $J_k (-\la_1)$ associated with $\la_1$. This implies
KCF$(Y,X) = (I_k , J_k (-\la_1)) \oplus  (I_k , J_k (-\overline{\la_1}))$. Combining this with \eqref{eq.beta5oplus1} and Lemma \ref{cab_lemma}, we get
\begin{align*}
( \left( \beta_5' \right)^\top ,  \beta_5') & \approx (I_k , J_k (-\la_1)) \oplus  (I_k , J_k (-\overline{\la_1})) \oplus  (I_k , J_k \left(- 1/\la_1 \right) ) \oplus  \left(I_k , J_k \left(-1/\overline{\la_1} \right)\right)
\\  & \approx (I_{2k} \, , \, C_{2k} \left(\frac{1-(a^2 + b^2)}{(a-1)^2+b^2} \, , \, \frac{2 \, |b|}{(a-1)^2+b^2} \right)) \\ &  \phantom{\approx}
\oplus
 (I_{2k} \, , \, C_{2k} \left(\frac{1-(a^2 + b^2)}{(a+1)^2+b^2} \, , \, \frac{2 \, |b|}{(a+1)^2+b^2} \right)).
\end{align*}
Lemma \ref{lemm.strictrealpenc} guarantees that the expression above is the real-KCF$( \left( \beta_5' \right)^\top ,  \beta_5')$. The result follows from the last row in the table of Theorem \ref{thm.1strealCFandKCF}, taking into account that from the two  $C_{2k} (c,d)$ above, we choose the parameters $c$ and $d$ such that $c^2 + d^2 <1$, i.e., the ones with largest denominator.
\end{proof}

\vspace*{1cm}

\appendix

\noindent
{\LARGE \bf Appendices}

\section{Proof of Lemma \ref{equivalencegammas.th}} \label{append.proof-lemma}

\begin{proof}
We will first see that the nonzero entries of $(P_kS_k)^\top \Gamma_k(P_kS_k)$ and $\widetilde\Gamma_k$ are placed in the same positions. For this, first notice that the nonzero entries of $\Gamma_k$ are just $1$ or $-1$, and they are placed in the following positions:
\begin{itemize}
    \item  $(k-i+1,i)$, for $i=1,\hdots,k$ (placed in the main anti-diagonal).
    \item  $(k-i+2,i)$, for $i=2,\hdots,k$ (placed below the main anti-diagonal).
\end{itemize}
These entries $1$ or $-1$ are also the nonzero entries of $(P_kS_k)^\top \Gamma_k(P_kS_k)$, since $P_k$ is a permutation matrix and $S_k$ is just a change of signs matrix, but they are placed in different positions. Now, we are going to identify the positions of these nonzero entries in $(P_kS_k)^\top \Gamma_k(P_kS_k)$. We analyze separately the cases $k$ even and $k$ odd.

\medskip

$\blacktriangleright$ $k$ even: Let us first identify the positions of the nonzero entries in the matrix $P_k^\top\Gamma_k P_k$.

The permutation corresponding to $P_k$, described in the statement, acts as follows on the $i$th column-index:
$$
\begin{array}{ccc}
     i&\longmapsto&\left\{\begin{array}{cc}
          k-2i+2&\mbox{for $i=1,\hdots,\frac{k}{2}$},  \\
          2i-k-1&\mbox{for $i=\frac{k}{2}+1,\hdots,k$}.
     \end{array}\right.
\end{array}
$$
As a consquence, when applying the permutation to the indices corresponding to the nonzero entries of $\Gamma_k$, these entries go to the following positions:
\begin{eqnarray}
     (k-i+1,i)&\longmapsto&\left\{\begin{array}{lc}
          (A):\ (k-2i+1,k-2i+2)&\mbox{for $i=1,\hdots,\frac{k}{2}$},  \\
          (B):\ (2i-k,2i-k-1)&\mbox{for $i=\frac{k}{2}+1,\hdots,k$.}
     \end{array}\right. \label{entrieskeven1} \\
     (k-i+2,i)&\longmapsto&\left\{\begin{array}{lc}
         (C):\  (k-2i+3,k-2i+2)&\mbox{for $i=2,\hdots,\frac{k}{2}$},  \\
          (E):\ (1,1)&\mbox{for $i=\frac{k}{2}+1$},\\
          (D):\ (2i-k-2,2i-k-1)&\mbox{for $i=\frac{k}{2}+2,\hdots,k$}.
     \end{array}\right.\label{entrieskeven2}
\end{eqnarray}
The indices $(A)$ and $(D)$ above altogether run through all the entries in the upper diagonal, namely those with indices $(i,i+1)$, for $i=1,\hdots,k-1$. Similarly, the indices $(B)$ and $(C)$ correspond to all the entries in the lower diagonal, namely those with indices $(i+1,i)$, for $i=1,\hdots,k-1$. These entries, together with $(E)=(1,1)$ are all the nonzero entries of the matrix $P_k^\top\Gamma_k P_k$. Note that these are, precisely, the same positions which contain all the nonzero entries of $\widetilde\Gamma_k$.

Now, let us identify the positions with entries $-1$ in $(P_kS_k)^\top \Gamma_k(P_kS_k)$. Note first that the entries equal to $-1$ in $\Gamma_k$ are placed in the positions $(k-2i+1,2i)$, for $i=1,\hdots,k/2$ (those in the main anti-diagonal), and $(k-2i+1,2i+1)$, for $i=1,\hdots,(k/2)-1$ (those below the main anti-diagonal). Now, we will keep track of these entries when applying the permutation corresponding to $P_k$. For this, we distinguish the cases $k\equiv0$ (mod $4$) and $k\equiv2$ (mod $4$).

\begin{itemize}
    \item $k\equiv0$ (mod $4$): Replacing $i$ by $2i$ in \eqref{entrieskeven1} and by $2i+1$ in \eqref{entrieskeven2}, the permutation corresponding to $P_k$ acts as follows on the previous indices:
    $$
    \begin{array}{ccc}
        (k-2i+1,2i) &\longmapsto&\left\{\begin{array}{lc}
            (A):\ (k-4i+1,k-4i+2) &\mbox{for $i=1,\hdots,\frac{k}{4}$},  \\
             (B):\ (4i-k,4i-k-1)&\mbox{for $i=\frac{k}{4}+1,\hdots,\frac{k}{2}$}.
        \end{array}\right.  \\
         (k-2i+1,2i+1)&\longmapsto&\left\{\begin{array}{lc}
              (C):\ (k-4i+1,k-4i)&\mbox{for $i=1,\hdots,\frac{k}{4}-1$},  \\
              (E):\ (1,1)&\mbox{for $i=\frac{k}{4}$},\\
              (D):\ (4i-k,4i-k+1)&\mbox{for $i=\frac{k}{4}+1,\hdots,\frac{k}{2}-1$.}
         \end{array}\right.
    \end{array}
    $$
    The first indices in $(A),(C),$ and $(E)$ above altogether are $1,5,\hdots,k-3$, namely, all indices $i\equiv1$ (mod $4$) between $1$ and $k$. Therefore, the corresponding entries are all the nonzero entries of $P_k^\top\Gamma_kP_k$ in these rows (two entries per row).

    Similarly, the first indices in $(B)$ and $(D)$ above altogether are $4,8,\hdots,k$ (the last one appearing only once in $(B)$ for $i=k/2$). Therefore, the positions $(B)$ and $(C)$ correspond to all the nonzero entries in $P_k^\top\Gamma_kP_k$ in rows with indices $i\equiv0$ (mod $4$).

    Summarizing, all the entries equal to $-1$ in the matrix $P_k^\top\Gamma_kP_k$ are in the rows with indices $i\equiv0,1$(mod $4$), and all the nonzero entries in these rows are equal to $-1$.

    The change of signs matrix $S_k$ changes the sign of all rows and columns with indices $i\equiv3$ (mod $4$). The change of sign in the rows turns into $-1$ all entries $(i,i-1)$ and $(i,i+1)$ with $i\equiv3$ (mod $4$), and the change of sign in the columns changes the signs of the entries $(i+1,i)$ and $(i-1,i)$ with $i\equiv3$ (mod $4$), namely $i+1\equiv0$ (mod $4$) and $i-1\equiv2$ (mod $4$). Therefore, in $(P_kS_k)^\top \Gamma_k(P_kS_k)=S_k^\top (P_k^\top \Gamma_kP_k)S_k$ all entries in the upper diagonal, namely those with indices $(i,i+1)$, are equal to $-1$, whereas only the entries in the positions $(i,i-1)$ with $i\equiv1,3$ (mod $4$) are equal to $-1$ (the ones in the positions $(i,i-1)$ with $i\equiv0,2$ (mod $4$) are all equal to $1$ instead). In other words, $(P_kS_k)^\top \Gamma_k(P_kS_k)=-\widetilde\Gamma_k$, as claimed.

    \item $k\equiv2$ (mod $4$): Replacing, again, $i$ by $2i$ in \eqref{entrieskeven1} and by $2i+1$ in \eqref{entrieskeven2}, the permutation $P_k$ acts on the indices corresponding to the negative entries of $\Gamma_k$ in a similar way, though the range of the indices now is slightly different, namely:
     $$
    \begin{array}{ccc}
        (k-2i+1,2i) &\longmapsto&\left\{\begin{array}{lc}
            (A):\ (k-4i+1,k-4i+2) &\mbox{for $i=1,\hdots,\frac{k-2}{4}$},  \\
             (B):\ (4i-k,4i-k-1)&\mbox{for $i=\frac{k+2}{4},\hdots,\frac{k}{2}$}.
        \end{array}\right.  \\
         (k-2i+1,2i+1)&\longmapsto&\left\{\begin{array}{lc}
              (C):\ (k-4i+1,k-4i)&\mbox{for $i=1,\hdots,\frac{k-2}{4}$},  \\
              (D):\ (4i-k,4i-k+1)&\mbox{for $i=\frac{k+2}{4},\hdots,\frac{k}{2}-1$.}
         \end{array}\right.
    \end{array}
    $$

    Now, the indices in $(A)$ and $(C)$ correspond to the two nonzero entries in the rows $3,7,\hdots,k-3$, namely all rows with indices $i\equiv3$ (mod $4$). Similarly, the entries in $(B)$ and $(D)$ correspond to the nonzero entries in the rows $2,6,\hdots,k$ (which are two entries per row, except in row $k$ where there is only one) , namely those rows with indices $i\equiv2$ (mod $4$). Summarizing, the $-1$ entries in $P_k^\top\Gamma_kP_k$ are those in rows with indices $i\equiv2,3$ (mod $4$).

    When we multiply on the left and on the right the matrix $P_k^\top\Gamma_kP_k$ by $S_k$, we introduce a change of sign in the rows and columns with indices $i\equiv3$ (mod $4$). The change of sign in the rows turn the $-1$ into $1$ in the entries placed in all rows with indices $i\equiv3$ (mod $4$). Now, the change of sign in the columns changes the sign of the entries with indices $(i-1,i)$ and $(i+1,i)$, with $i\equiv3$ (mod $4$), namely $i-1\equiv2$ (mod $4$) and $i+1\equiv0$ (mod $4$). Therefore the entries of $(P_kS_k)^\top \Gamma_k(P_kS_k)=S_k^\top (P_k^\top \Gamma_kP_k)S_k$ in the positions $(i,i-1)$, for $i\equiv0,2$ (mod $4)$, are $-1$, and these are the only entries which are equal to $-1$ in this matrix. In other words, $(P_kS_k)^\top \Gamma_k(P_kS_k)=\widetilde\Gamma_k$, as claimed.
\end{itemize}

\medskip

$\blacktriangleright$ $k$ odd: In this case, the permutation corresponding to $P_k$ acts as follows on the $i$th column-index:
$$
\begin{array}{ccc}
     i&\longmapsto&\left\{\begin{array}{cc}
          k-2i+2&\mbox{for $i=1,\hdots,\frac{k+1}{2}$},  \\
          2i-k-1&\mbox{for $i=\frac{k+3}{2},\hdots,k$}.
     \end{array}\right.
\end{array}
$$
As a consequence, the permutation applied to the matrix $\Gamma_k$ acts as follows on the positions corresponding to the nonzero entries of $\Gamma_k$:
\begin{eqnarray}
     (k-i+1,i)&\longmapsto&\left\{\begin{array}{lc}
          (A):\ (k-2i+1,k-2i+2)&\mbox{for $i=1,\hdots,\frac{k-1}{2}$},  \\
           (E):\ (1,1)&\mbox{for $i=\frac{k+1}{2}$},\\
          (B):\ (2i-k,2i-k-1)&\mbox{for $i=\frac{k+3}{2},\hdots,k$.}
     \end{array}\right.  \label{entrieskodd1}\\
     (k-i+2,i)&\longmapsto&\left\{\begin{array}{lc}
         (C):\  (k-2i+3,k-2i+2)&\mbox{for $i=2,\hdots,\frac{k+1}{2}$},  \\
          (D):\ (2i-k-2,2i-k-1)&\mbox{for $i=\frac{k+3}{2},\hdots,k$}.
     \end{array}\right.\label{entrieskodd2}
\end{eqnarray}
The indices $(A)$ and $(D)$ altogether correspond to the entries in the upper diagonal of $P_k^\top\Gamma_kP_k$, namely those with indices $(i,i+1)$, for $i=1,\hdots,k-1$. Similarly, the indices $(B)$ and $(C)$ together correspond to $(i,i-1)$, for $i=2,\hdots,k$, i.e., to the entries in the lower diagonal of $P_k^\top \Gamma_k P_k$. These indices, together with $(E)=(1,1)$ correspond to the nonzero entries of $P_k^\top\Gamma_kP_k$, as well as those of $\widetilde\Gamma_k$.

As in the case $k$ even, we are now going to identify the positions of the $-1$ entries in $(P_kS_k)^\top \Gamma_k(P_kS_k)$. Starting again from the entries equal to $-1$ in $\Gamma_k$, which are placed in the positions $(k-2i+1,2i)$ and $(k-2i+1,2i+1)$, for $i=1,\hdots,(k-1)/2$, we keep track of these entries after applying the permutation corresponding to the matrix $P_k$ and then applying the change of signs corresponding to $S_k$. We analyze separately the cases $k\equiv1$ (mod $4$) and $k\equiv3$ (mod $4$).

\begin{itemize}
    \item $k\equiv1$ (mod $4$): Replacing $i$ by $2i$ in \eqref{entrieskodd1} and by $2i+1$ in \eqref{entrieskodd2}, the permutation corresponding to $P_k$ acts as follows on the previous indices:
$$
    \begin{array}{ccl}
        (k-2i+1,2i) &\longmapsto&\left\{\begin{array}{lc}
            (A):\ (k-4i+1,k-4i+2) &\mbox{for $i=1,\hdots,\frac{k-1}{4}$},  \\
             (B):\ (4i-k,4i-k-1)&\mbox{for $i=\frac{k+3}{4},\hdots,\frac{k-1}{2}$}.
        \end{array}\right.  \\
         (k-2i+1,2i+1)&\longmapsto&\left\{\begin{array}{lc}
              (C):\ (k-4i+1,k-4i)&\mbox{for $i=1,\hdots,\frac{k-1}{4}$},  \\
              (D):\ (4i-k,4i-k+1)&\mbox{for $i=\frac{k+3}{4},\hdots,\frac{k-1}{2}$.}
         \end{array}\right.
    \end{array}
    $$
    These are all nonzero entries of the matrix $P_k^\top \Gamma_kP_k$ in the following rows: the entries $(A)$ and $(C)$ together are in the rows $2,6,\hdots,k-3$, namely all rows with indices $i\equiv2$ (mod $4$), whereas the entries $(B)$ and $(D)$ together are the nonzero entries in the rows $3,7,\hdots,k-2$, namely the rows with indices $i\equiv3$ (mod $4$). Therefore, the entries $(A)-(D)$ correspond to the nonzero entries of $P_k^\top\Gamma_kP_k$ in the rows with indices $i\equiv2,3$ (mod $4$). Therefore, we are in the same situation as for $k\equiv2$ (mod $4$), so $(P_kS_k)^\top\Gamma_k(P_kS_k)=\widetilde\Gamma_k$.

    \item $k\equiv3$ (mod $4$):  Replacing $i$ by $2i$ in \eqref{entrieskodd1} and by $2i+1$ in \eqref{entrieskodd2}, this time we have:
$$
    \begin{array}{ccl}
        (k-2i+1,2i) &\longmapsto&\left\{\begin{array}{lc}
            (A):\ (k-4i+1,k-4i+2) &\mbox{for $i=1,\hdots,\frac{k-3}{4}$},  \\
             (E):\ (1,1)&\mbox{for $i=\frac{k+1}{4}$,}\\
             (B):\ (4i-k,4i-k-1)&\mbox{for $i=\frac{k+1}{4}+1,\hdots,\frac{k-1}{2}$}.
        \end{array}\right.  \\
         (k-2i+1,2i+1)&\longmapsto&\left\{\begin{array}{lc}
              (C):\ (k-4i+1,k-4i)&\mbox{for $i=1,\hdots,\frac{k-3}{4}$},  \\
              (D):\ (4i-k,4i-k+1)&\mbox{for $i=\frac{k+1}{4},\hdots,\frac{k-1}{2}$.}
         \end{array}\right.
    \end{array}
    $$
    The entries $(A)-(E)$ in this case correspond to all the nonzero entries of $P_k^\top\Gamma_kP_k$ in the rows with indices $i\equiv0,1$ (mod $4$). More precisely, the first indices in $(A)$ and $(C)$ together are equal to $4,8,\hdots,k-3$, and those of $(B), (D)$, and $(E)$ together are equal to $1,5,\hdots,k-2$ (in particular, the index $1$ comes from $(E)$, and from $(D)$ for $i=(k+1)/4$). Therefore, we are in the same situation as in the case $k\equiv0$ (mod $4$), so, again, $(P_kS_k)^\top\Gamma_k(P_kS_k)=-\widetilde\Gamma_k$.
\end{itemize}
\end{proof}

\section{Canonical forms of degenerate blocks $\beta_4'$ and $\beta_5'$ in Theorem \ref{thm.lw96}} \label{append.degenerate}
For completeness, the next table presents the canonical forms in Theorem \ref{main_th} of the matrices $\beta_4'$ and $\beta_5'$ in \eqref{beta4} and \eqref{beta5} if $a =0$ or $b =0$. We omit the proof for brevity.

\medskip

\begin{center}
{\renewcommand{\arraystretch}{2.4}
    \renewcommand{\tabcolsep}{0.15cm}
    \begin{tabular}{|c|c|}\hline
         \begin{tabular}{l}
         Degenerate variant of \\[-0.5cm]
         block $B$ in Theorem \ref{thm.lw96}
         \end{tabular}
         &is real-congruent to direct sum of blocks in Theorem \ref{main_th}  \\\hline\hline
         $\beta_4'$&\begin{tabular}{ll} $\displaystyle \Gamma_k\otimes C (0,1)$ & if $b=0$ and $k$ even
         \\[0.1cm]
         $\displaystyle \left(\Gamma_k\otimes \left(\varepsilon \, (-1)^\ell \right) \right) \oplus
         \left(\Gamma_k\otimes \left(\varepsilon \, (-1)^\ell \right) \right)
         $
         &if $b=0$ and $k = 2 \ell +1$\\[0.3cm] \end{tabular}\\[0.2cm] \hline
         $\beta_5'$& \begin{tabular}{l}
               $\left(\Gamma_k \otimes C(0,1) \right) \oplus  \left(\Gamma_k \otimes C(0,1) \right)$  \; if $b=0$, $a=0$, and $k$ even \\
               $\Gamma_k \oplus \Gamma_k \oplus (-\Gamma_k) \oplus (-\Gamma_k)$  \;  if $b=0$, $a=0$, and $k$ odd \\
               $J_{2k} (0) \oplus J_{2k} (0)$  \; if $b=0$, $a=\pm 1$\\
              $\displaystyle H_{2k}\left(\frac{1-|a|}{1+|a|} \right) \oplus H_{2k}\left(\frac{1-|a|}{1+|a|} \right)$  \; if $b=0$, $a\ne 0$, $a\ne \pm1$ \\[0.2cm]
              $(\Gamma_k\otimes C \left(\pm \frac{|b|}{\sqrt{1+b^2}} \, ,\, \frac{1}{\sqrt{1+b^2}} \right)) \oplus (\Gamma_k\otimes C \left(\pm \frac{|b|}{\sqrt{1+b^2}} \, ,\, \frac{1}{\sqrt{1+b^2}} \right))$  \\ \qquad \qquad if $b\ne 0$, $a=0$, $k$ even
              \\[0.3cm]
              $(\Gamma_k\otimes C \left(\pm \frac{1}{\sqrt{1+b^2}}  \, ,\, \frac{|b|}{\sqrt{1+b^2}}  \right) ) \oplus (
              \Gamma_k\otimes C \left(\pm \frac{1}{\sqrt{1+b^2}}  \, ,\, \frac{|b|}{\sqrt{1+b^2}}  \right))$ \\
              \qquad \qquad if $b\ne 0$, $a=0$, $k$ odd \\[0.3cm]
              \end{tabular}
         \\[0.3cm] \hline
    \end{tabular}}
\end{center}

\bigskip

\noindent{\bf Acknowledgments}. This work has been partially supported by grants PID2023-147366NB-I00 funded by MICIU/AEI/10.13039/501100011033 and FEDER/UE, and RED2022-134176-T.

\bibliographystyle{plain}

\end{document}